\documentclass{amsart}
\usepackage{graphicx} 
\usepackage{amsmath,dsfont}
\usepackage{amssymb}
\usepackage{xcolor} 
\usepackage{mathtools}
\usepackage{colortbl} 
\usepackage{tikz} 
\usepackage{hf-tikz} 
\usepackage{makecell}
\usepackage{mathrsfs}
\usepackage{graphicx}
\usepackage{enumerate}
\usepackage{chngcntr}
\usepackage{enumitem}
\usepackage[colorlinks,
linkcolor=red,
anchorcolor=blue,
citecolor=green
]{hyperref}

\newtheorem{theorem}{Theorem}
\newtheorem*{theorem*}{Theorem}
\numberwithin{theorem}{section}
\newtheorem{definition}{Definition}[section]
\newtheorem{prop}{Proposition}[section]
\newtheorem{lemma}{Lemma}[section]
\newtheorem{remark}{Remark}[section]
\newtheorem{conjecture}{Conjecture}[section]
\newtheorem{corollary}{Corollary}[section]

\newtheorem*{claim}{Claim}
\DeclareMathOperator{\dist}{dist}
\DeclareMathOperator{\supp}{supp}

\title[Restriction estimates for surfaces with negative curvature]{Restriction estimates for surfaces with negative curvature in $\mathbb{R}^{3}$}
 \author{}
	\author{Shaoming Guo}
\address{Chern Institute of Mathematics, LPMC and New Cornerstone Science Laboratory,
Nankai University, Tianjin 300071, PR China}    \email{shaomingguo2018@gmail.com}
	\author{Diankun Liu}
	\address{School of Mathematical Sciences, Zhejiang University, Hangzhou 310027, PR China}
	\email{liudiankun5@gmail.com}
   
	\author{Yakun Xi}
	\address{School of Mathematical Sciences, Zhejiang University, Hangzhou 310027, PR China}
	\email{yakunxi@zju.edu.cn}

\begin{document}

\begin{abstract}

In $\mathbb{R}^{3}$, we prove that $L^q\to L^p$ restriction estimates associated with smooth surfaces with negative Gaussian curvature hold for all $p>\frac{22}{7}$ and $q'<\frac{p}{2}$. Building on Demeter--Wu's work for the model hyperbolic paraboloid, we introduce a geometric propagation principle for good lines, which controls the degenerate directions arising from straight-line segments on the surface. This overcomes a key difficulty in the general case, where such directions may vary with the geometry rather than being fixed by the coordinate axes. 

\end{abstract}

\maketitle

\section{Introduction}

Let
\[
\mathcal{S}=\{(\xi,\eta,h(\xi,\eta)):(\xi,\eta)\in[-1,1]^2\}.
\]
Define the extension operator associated with $\mathcal S$ by
\begin{equation}\label{eq extensionoperator}
E_{\mathcal{S}}f(x,y,t)
=\int_{[-1,1]^{2}}e^{i(x\xi+y\eta+t h(\xi,\eta))}\,f(\xi,\eta)\,d\xi\, d\eta,
\end{equation}
where $\mathcal{S}$ is a smooth surface with nonvanishing Gaussian curvature. Stein made the following conjecture.

\begin{conjecture}\label{conj}
Assume that $h$ is smooth and that the Gaussian curvature of $\mathcal S$ is nonzero at every point of $[-1,1]^2$. Then there exists a constant $C$, depending only on $p$, $q$, and the surface $\mathcal S$, such that
\begin{equation}\label{eq conj}
\|E_{\mathcal S}f\|_{L^{p}(\mathbb{R}^{3})}\le C\|f\|_{L^{q}([-1,1]^2)},
\end{equation}
for all $p>3$ and $q'\le \frac{p}{2}$, where $q'=\frac{q}{q-1}$.
\end{conjecture}

A great deal of work has been devoted to this conjecture. It was solved by Fefferman \cite{Fefferman} in $\mathbb{R}^{2}$, but remains open in all higher dimensions.

We briefly recall some background in $\mathbb R^3$. In the elliptic case, foundational works introduced many of the central techniques in restriction theory, including wave packet decomposition \cite{Bourgain}, bilinear restriction estimates \cite{Wolff01,Tao03}, the broad--narrow method \cite{BourgainGuth}, and polynomial partitioning \cite{Guth15}. More recently, Wang and Wu \cite{WangWu2024} proved restriction estimates in the range $p>\frac{22}{7}$ for $p=q$ by combining refined decoupling with incidence geometry, and Gao, Wu, and the third author \cite{GWX} extended this to the range $p>\frac{22}{7}$ and $q'<\frac{p}{2}$.  

The negatively curved case is more delicate. Although Stein's conjecture is expected to hold for all surfaces with nonvanishing Gaussian curvature, the hyperbolic setting exhibits substantially more complicated oscillatory behavior. One basic reason is that a surface with negative Gaussian curvature may contain straight lines, and along such directions the phase may fail to oscillate. The model example is the hyperbolic paraboloid
\[
\mathbb{H}=\{(\xi,\eta,\xi\eta):\xi,\eta\in[-1,1]^2\}.
\]
Indeed, through every point of $\mathbb H$, the coordinate curves in the $\xi$- and $\eta$-directions are straight lines on the surface. For a general surface with negative Gaussian curvature, the arrangement of such straight lines can be far more irregular, which makes the analysis substantially more difficult.

We briefly review previous results for surfaces with negative Gaussian curvature. For the model hyperbolic paraboloid $\mathbb H$, Lee \cite{Lee} and Vargas \cite{Vargas} independently proved the restriction estimate in the range $p>\frac{10}{3}$ and $q'<\frac{p}{2}$ by the bilinear method. This was later improved by Cho and Lee \cite{ChoLee}, Kim \cite{Kim}, and Stovall \cite{Stovall} to the range $p>3.25$ and $q'\le \frac{p}{2}$.

For more general negatively curved surfaces, Buschenhenke, M\"uller, and Vargas first studied the perturbed hyperbolic paraboloid
\[
h(\xi,\eta)=\xi\eta+\frac{\eta^3}{3},
\]
obtaining the range $p>\frac{10}{3}$ and $q'< \frac{p}{2}$ in \cite{BMV1}, and later improving this to $p>3.25$ and $q'< \frac{p}{2}$ in \cite{BMV2}. Subsequently, Buschenhenke, M\"uller, and Vargas \cite{BMV3} and the first author and Oh \cite{GO} independently proved that, for all smooth surfaces with negative Gaussian curvature, the restriction estimate holds in the range $p>3.25$ and $q'< \frac{p}{2}$. More recently, Demeter and Wu \cite{DW} proved the range $p=q>\frac{22}{7}$ for the model hyperbolic paraboloid by combining bilinear refined decoupling with incidence estimates.

We now state our main theorem. It extends the result of Demeter and Wu in two respects. First, we replace the model hyperbolic paraboloid by an arbitrary smooth surface with negative Gaussian curvature in $\mathbb R^3$. Second, for the same range $p>\frac{22}{7}$, we obtain $L^q\to L^p$ restriction estimates for all $q$ with $q'<\frac{p}{2}$.
\begin{theorem}\label{thm restriction}
Assume that $h$ is smooth and that the Gaussian curvature of $\mathcal S$ is negative at every point of $[-1,1]^2$. Then there exists a constant $C$, depending only on $p$, $q$, and the surface $\mathcal S$, such that
\begin{equation}
\|E_{\mathcal S}f\|_{L^{p}(\mathbb{R}^{3})}\le C\|f\|_{L^{q}([-1,1]^2)},
\end{equation}
for all $p>\frac{22}{7}$ and $q'<\frac{p}{2}$.
\end{theorem}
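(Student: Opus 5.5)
We will follow the Demeter--Wu scheme for $\mathbb H$, adapted to a general negatively curved $h$. The argument reduces to a bilinear estimate, proves that estimate by induction on scales using refined bilinear decoupling and incidence geometry, and then converts the result into $L^q\to L^p$ bounds.

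\emph{Reductions.} We first localize $f$ to a small cap around a point and apply an affine change of variables. After this, we may assume $h(\xi,\eta)=\xi\eta+O(|(\xi,\eta)|^3)$ in $C^N$ on a small square. The key geometric fact is that each point of $\mathcal S$ has two asymptotic directions $v_1(\xi,\eta)$ and $v_2(\xi,\eta)$, namely the null directions of the indefinite form $D^2h(\xi,\eta)$, and these vary smoothly. We call a pair of caps $\tau_1,\tau_2$ of size $K^{-1}$ \emph{transverse} if their separation vector stays away from both null cones, that is, $|\langle D^2h\,(\omega_1-\omega_2),\omega_1-\omega_2\rangle|\gtrsim K^{-2}$. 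For such pairs the bilinear Fourier restriction estimate holds, as in Lee and Vargas. We then run a bilinear-to-linear Whitney-type decomposition in the spirit of \cite{BMV3,GO}. Pairs that are degenerate along a curve (rather than an axis) are handled by rescaling: a strip of width $\delta$ along an asymptotic curve is mapped to a surface of the same class, which lets us induct on scales. This part should follow \cite{BMV3,GO} with only notational changes. It reduces Theorem~\ref{thm restriction} to a bilinear estimate
\[
\|(E f_1 E f_2)^{1/2}\|_{L^p(B_R)}\lesssim_\epsilon R^\epsilon \|f_1\|_2^{2/p}\|f_1\|_\infty^{1-2/p}\cdots
\]
for transverse caps, valid for $p>22/7$. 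From such a restricted strong-type form, the $q'<p/2$ range then follows by the standard interpolation and epsilon-removal argument used by Gao--Wu--Xi \cite{GWX}.

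\emph{Bilinear estimate.} We will prove this estimate by induction on $R$, in the manner of Wang--Wu \cite{WangWu2024} and Demeter--Wu \cite{DW}. The steps are as follows.
\begin{enumerate}[label=(\roman*)]
\item Perform a wave packet decomposition at scale $R^{1/2}$. The tubes from cap $\theta$ point in the normal direction $(-\nabla h(\theta),1)$.
\item Apply a two-ends reduction and pigeonhole the tube density and the brooms.
\item Use refined bilinear decoupling on $R^{1/2}$-balls to bound $\|Ef_1Ef_2\|$ by incidence counts.
\item Control those incidences by the sharp Kakeya-type estimates for "tubes in planks", using the Furstenberg-type incidence input of Wang--Wu.
\end{enumerate}
The only hyperbolic-specific issue in this step is that the Gauss map sends a line $\ell$ on $\mathcal S$ to normals that all lie in one plane. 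The corresponding tubes are then coplanar and may concentrate in a slab, which breaks the incidence estimate.

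\emph{Main obstacle: controlling the degenerate directions.} For $\mathbb H$, Demeter--Wu handle these slabs explicitly, because the lines on $\mathbb H$ are the coordinate lines and the bad planes form two fixed one-parameter families. For general $h$, genuine straight lines typically do not exist. Instead, asymptotic curves approximate lines at scale $R^{-1/2}$ only over lengths $\lesssim R^{-1/4}$ or so, depending on the third derivatives, and their directions rotate. We plan a "propagation principle for good lines", which we would try to establish in three parts.
\begin{enumerate}[label=(\alph*)]
\item Call a segment of length $\sigma$ in frequency a \emph{good line} at scale $R$ if $\mathcal S$ stays within $R^{-1}$ of that segment. Then, from $h$'s Taylor expansion, the set of good lines through a cap is confined to an $O(\sigma^{-1}R^{-1})$-neighbourhood of the two asymptotic directions.
\item If the segments through two nearby caps $\theta,\theta'$ on a common asymptotic curve are both good, then this goodness propagates along the curve, with slopes determined up to acceptable error by ODE stability of the asymptotic direction field.
\item Consequently, the planks (slabs) in physical space generated by degenerate pairs form a family with the same combinatorial structure as the $\mathbb H$ case: each slab is determined by one point and one direction parameter, and slabs satisfy the same Wolff-type axiom. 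This is the structure Demeter--Wu exploit.
\end{enumerate}
With (c), the Demeter--Wu incidence bounds on slabs transfer, and closing the induction gives $p>22/7$. We expect (b)--(c) to be the hardest part, specifically keeping the error of curvature-induced bending below the $R^{-1/2}$ tube width at every intermediate scale. To do this we would first try parabolic rescaling to the scale where the cubic terms become negligible, and then re-enter the induction.
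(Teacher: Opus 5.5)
\textbf{The gap is in the core analytic step.} In (iii) you treat refined bilinear decoupling as available and put the hyperbolic obstruction in the incidence step. But the Wang--Wu input (Lemma~\ref{lem incidence}) does not depend on the surface. It only uses the $m$-parallel condition and a two-ends, $\lambda$-dense shading, so coplanar tubes do not break it. The concentration you describe, caps along a line on $\mathcal S$, is what destroys $\ell^2$-type refined decoupling. That is why only a \emph{bilinear} version for separated pairs can hold, and proving it for general $h$ is the main content of the paper (Theorem~\ref{thm refineddecoupling}).

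That proof iterates Lemma~\ref{lem bilinear restriction}. A pair $(\tau_1,\tau_2)$ is cut into $(r/K_0,r)$-rectangles parallel to $v(\zeta_1,\zeta_2)$, the projection of $T_{\zeta_1}\mathcal S\cap T_{\zeta_2}\mathcal S$ (Lemma~\ref{lem equal}). The next step then pairs non-adjacent pieces \emph{inside one such rectangle} (Lemmas~\ref{lem 3.6} and~\ref{lemma NB}). If that rectangle were aligned with a line on $\mathcal S$, the new intersection direction would be parallel to it and the step would gain nothing. What the iteration needs is exactly Corollary~\ref{cor 1}: if $p_1,p_2$ lie on a good line, the lines through $p_j$ parallel to $v(p_1,p_2)$ are again good. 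For $\xi\eta$ this is the slope reflection $k\mapsto -k$. In general it follows from $\nabla h=(\eta,\xi)+O(\varepsilon_0)$ in $C^1$, together with Propositions~\ref{prop bad1} and~\ref{prop bad2}: bad points lie on bad lines through the base point, and bad lines are $O(\varepsilon_0)$-close to the axes.

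Your (a)--(c) do not supply this. Your ``good lines'' are nearly straight segments, which are the degenerate objects. Step (b) propagates near-straightness along asymptotic curves, and (c) posits a Wolff axiom for slabs. None of these controls the direction $v(p_1,p_2)$ produced at each decoupling step, and all three are left as things you would try to prove.

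\textbf{The reduction to your bilinear estimate also fails as written.} Under your transversality condition, every pair whose separation lies near an asymptotic cone is non-transverse. You propose to handle these by rescaling a strip along an asymptotic \emph{curve} to a surface of the same class. The rescalings compatible with $E_{\mathcal S}$ are affine in $(\xi,\eta)$, so the strip must be straight. Even then, $\eta\mapsto K\eta$, $h\mapsto Kh$ produces the term $K\,(h(\xi,0)-\text{affine})$. This stays in the class only if $h$ is affine on the core line, that is, only if the line is a genuine bad line with $h\circ F_{\alpha,a}^{-1}=\eta\tilde h+L(\xi)$.

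This is why the paper isolates exact bad lines and shows only $O_D(1)$ pass through each point (Lemmas~\ref{lem finite} and~\ref{lem finite2}). It then uses a Bourgain--Guth reduction (Proposition~\ref{prop reduction}):
\begin{itemize}
\item the narrow terms are single $K^{-1}$-squares or $K^{-1}$-bad strips, both rescalable within \eqref{eq reduction}--\eqref{eq smallcoeff};
\item the broad term uses pairs that are only required to be transverse to bad strips;
\item those pairs are then converted to good pairs (Cases 1--3 in the proof of Proposition~\ref{prop broadestimate}), with an anisotropic rescaling when a nearly parallel bad strip passes through one square.
\end{itemize}
Calling this step ``notational changes'' to the Buschenhenke--M\"uller--Vargas and Guo--Oh arguments, which stop at $p>3.25$, skips exactly the part that has to be redone at $p=22/7$.
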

Our proof is inspired by the recent work of Demeter and Wu \cite{DW}, but significant new geometric difficulties arise for general smooth surfaces with negative Gaussian curvature. We extend their bilinear refined decoupling argument from the model hyperbolic paraboloid to a suitable generic setting, and we adapt the broad--narrow analysis to this more general class of surfaces.

The proof proceeds as follows. We first reduce the restriction problem for smooth surfaces with negative Gaussian curvature to a corresponding problem for a suitable class of polynomial surfaces, following the work of the first author and Oh \cite{GO}. For these polynomial surfaces, we introduce a geometric dichotomy between good lines and bad lines in the parameter plane. Along good lines, we prove a bilinear refined decoupling inequality in a suitable generic setting, which serves as the main input in the broad case. Near bad lines, we make a change of variables that puts the phase into a form with nontrivial first-order dependence on one variable, and then perform an anisotropic parabolic rescaling transverse to the bad line. This allows the narrow contribution to be treated by induction on scales.

A central difficulty in extending the method of Demeter and Wu \cite{DW} from the model hyperbolic paraboloid to general smooth surfaces with negative Gaussian curvature lies in the bilinear refined decoupling step. In the model case $\mathbb H$, the degenerate directions are rigid: the bad lines occur only in the directions parallel to the $\xi$- and $\eta$-axes, and this rigid structure is compatible with the multiscale iteration. For a general negatively curved surface, however, the bad lines may be distributed in a much more irregular way, and it is no longer clear a priori how to propagate the geometric configuration needed for the iteration across intermediate scales. A guiding principle in our argument is that, under the nonvanishing curvature assumption, these degenerate directions should still form a small family. More precisely, while the family of all lines in the parameter plane is $2$-dimensional, the family of lines whose images on the surface are straight lines should have dimension at most $1$. This leads us to isolate a distinguished class of bad lines and to exploit the fact that generic interactions remain separated from them.

The key new geometric input is a propagation property for good lines. { A related geometric perspective appears in the exceptional-set analysis of \cite{BMV3}, where failures of strong transversality are organized into lower-dimensional exceptional sets. In our setting, we prove the following propagation property:} if $\zeta_1,\zeta_2$ lie on a good line, then for each $j=1,2$, the line in the parameter plane passing through $\zeta_j$ and parallel to the projection of the intersection line of the tangent planes $T_{\zeta_1}\mathcal S$ and $T_{\zeta_2}\mathcal S$ is also a good line. A second difficulty arises in the broad--narrow analysis, mainly in the narrow case. Once the bilinear refined decoupling theorem is established, the broad part can be handled within the decoupling/incidence framework of \cite{WangWu2024,DW,GWX}. The main issue is to identify the correct anisotropic rescaling for general negatively curved surfaces. Unlike in the model case of $\mathbb H$, where the coordinate directions already determine the appropriate anisotropic rescaling, in the general case one must first identify the relevant bad line and then rescale in the transverse direction. This step, based on the bad-line geometry in \cite{GO}, is what allows us to close the narrow case by induction on scales.

\subsection*{Acknowledgment}  This project is supported by the National Key Research and Development Program of China No. 2022YFA1007200. S. G. is partly supported by the Nankai Zhide Foundation, NSFC Grant No. 12426204, and the New Cornerstone Science Foundation. D. L. and Y. X. are supported by Zhejiang Provincial Natural Science Foundation of China under
Grant No. LR25A010001, and NSFC Grant No.
12571107. The authors would like to thank Shukun Wu for suggesting this project and for helpful discussions on its main difficulties.

\subsection*{Notation} 
\begin{enumerate}
\item We write $a \lessapprox b$ to mean that for any $\varepsilon > 0$ and any large parameter $P \geq 1$ arising from the context (typically $R$ or $\delta^{-1}$), there exists a constant $C_{\varepsilon}$ such that $a \leq C_{\varepsilon} P^{\varepsilon} b$.
    \item We write $a \gtrapprox b$ to mean that for any $\varepsilon > 0$ and any large parameter $P \geq 1$ arising from the context, there exists a constant $C_{\varepsilon}$ such that $a \geq C_{\varepsilon} P^{-\varepsilon} b$.
    \item We use $B^m_r(x)$ to denote the $m$-dimensional ball of radius $r$ centered at $x$; we may simply write $B^m_r$ or $B_r$ if $x$ and $m$ are clear from context.
    \item We use the terms \emph{strip} and \emph{rectangle} interchangeably to refer to a rectangular region whose aspect ratio is sufficiently large. In this paper, we typically refer to such a rectangle at a large scale as a \emph{strip}, and at a small scale as a \emph{rectangle}.
\end{enumerate}

\section{Standard reduction}
To prove Theorem \ref{thm restriction}, it suffices, by the $\varepsilon$-removal lemma of Tao \cite{Tao99}, to establish the following local estimate.

\begin{theorem}\label{thm restriction2}
For any $\epsilon>0$, there exists a constant $C_\epsilon$, depending only on $\epsilon$, $p$, $q$, and the surface $\mathcal S$, such that
\begin{equation}
\|E_{\mathcal S}f\|_{L^p(B_R)}\le C_\epsilon R^\epsilon \|f\|_{L^q([-1,1]^2)}
\end{equation}
for all $p\ge \frac{22}{7}$ and $q'\leq\frac{p}{2}$.
\end{theorem}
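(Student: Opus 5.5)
We prove Theorem \ref{thm restriction2} by induction on the scale $R$, following the outline in the introduction. Throughout we fix $p\ge \frac{22}{7}$ and $q'\le \frac p2$.

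\textbf{Reduction to polynomial surfaces.} Following the first author and Oh \cite{GO}, we first reduce to a class of polynomial surfaces. Using a partition of unity and Taylor expansion, $h$ is replaced on small caps by polynomials of bounded degree with negative Hessian determinant. This class is closed under the anisotropic rescalings used below, with uniform bounds, which makes an induction hypothesis over the whole class meaningful.

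\textbf{Broad--narrow decomposition.} Cover $[-1,1]^2$ by caps $\tau$ of side $K^{-1}$ and classify pairs of caps. A pair $(\tau_1,\tau_2)$ is \emph{good} if the line joining them is a good line. For such pairs the intersection direction of $T_{\zeta_1}\mathcal S$ and $T_{\zeta_2}\mathcal S$ is quantitatively transverse, and it is not the projection of a straight line lying on $\mathcal S$. On each ball $B_{K^2}$, one of two cases holds.
\begin{itemize}
\item In the \emph{broad case}, $|E f|$ is dominated by $|Ef_{\tau_1}Ef_{\tau_2}|^{1/2}$ for some good pair.
\item In the \emph{narrow case}, the dominant caps cluster in a $K^{-1}$-neighbourhood of a bad line $\ell$.
\end{itemize}
This dichotomy relies on the fact that bad lines form an at most one-parameter family.

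\textbf{Broad part.} Here we prove a bilinear refined decoupling inequality along good lines. The multiscale iteration needs the good-line property to persist when passing to smaller caps and tubes. This is exactly the propagation principle: the line through $\zeta_j$ parallel to the projected intersection line of the two tangent planes is again good. With this in hand, we run the Wang--Wu/Demeter--Wu scheme. It combines wave packets, refined decoupling, the two-ends reduction, and Furstenberg/incidence estimates, giving the bilinear bound $\|(Ef_1Ef_2)^{1/2}\|_{L^p(B_R)}\lessapprox \|f\|_{L^2}^{2/p}\|f\|_{L^\infty}^{1-2/p}$ for $p\ge 22/7$. We then upgrade this to $L^q$ for $q'\le p/2$ as in \cite{GWX}, by dyadic pigeonholing of $|f|$ and interpolation of restricted-type estimates.

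\textbf{Narrow part.} Near a bad line $\ell$, we change variables as in \cite{GO} so that $\ell$ becomes an axis and the phase takes the form $\xi\eta+$ (higher order) with nontrivial linear dependence on one variable. We then perform the anisotropic parabolic rescaling transverse to $\ell$. Each strip of width $K^{-1}$ maps to a surface in the same polynomial class at scale $R/K$, so the induction hypothesis applies. We sum the strips with the $\ell^{q}$-type orthogonality available for $q'\le p/2$. For $p>3$ the rescaling gains a factor $K^{-\alpha}$ with $\alpha>0$, which closes the induction when $K=R^{\epsilon^2}$.

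\textbf{Main obstacle.} We expect the hardest step to be the broad case, specifically establishing the propagation property and showing that good-line transversality is stable under the multiscale decoupling iteration. Our first approach would be to parametrize the intersection-line direction $v(\zeta_1,\zeta_2)$ of the two tangent planes. We would show that the set where $v$ is tangent to an asymptotic direction of $\mathcal S$ along the whole line is a lower-dimensional algebraic set, using polynomiality and nonvanishing curvature. A transversality bound $\gtrsim K^{-C}$ outside a small neighbourhood of this set then follows by a Łojasiewicz-type inequality.
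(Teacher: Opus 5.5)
Your broad/narrow dichotomy is not exhaustive, and the configurations it misses are where much of the paper's work goes. You define broad as ``some \emph{good} pair dominates'' and narrow as ``the dominant caps lie in a $K^{-1}$-neighbourhood of one bad line''. This leaves out every situation in which the significant caps are pairwise not in a common $K^{-1}$-bad strip, yet all their connecting lines make angle $<\varepsilon_0^{1/2}$ with an axis. It also leaves out a single dominant cap lying away from all bad lines. Two examples:
\begin{itemize}
\item For $h=\xi\eta$, take caps centred at $(0,0)$ and $(1,K^{-1/2})$.
\item For $h=\xi\eta+a\xi^3$ with $a\neq0$ small, there are no near-horizontal bad lines at all, so any near-horizontal cluster of caps fits neither of your cases.
\end{itemize}
Such pairs cannot go to the narrow part, since there is no $K^{-1}$-strip about a bad line to rescale. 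The good-pair decoupling theorem does not apply to them either. The paper handles this as follows.
\begin{itemize}
\item It builds the broad function from the weaker notion of $K^{-1}$-transversality (Definition~\ref{def transverse}). Proposition~\ref{prop reduction} adds a separate single-cap term and uses that each $K^{-1}$-square meets only $O_D(1)$ bad strips.
\item In Step~3 of the broad estimate it splits the dominant transverse pair into three cases.
\item First, good pairs, which go directly to the bilinear refined decoupling theorem.
\item Second, near-axis pairs with no nearly parallel bad strip through either cap. Here the refined-decoupling proof is rerun using only the propagation property (Remark~\ref{rem:weaker-good-condition}).
\item Third, near-axis pairs with a nearly parallel bad strip through one cap. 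In coordinates adapted to that bad line the centres are $(0,0)$ and $(2K^{-\alpha_1},2K^{-\alpha_1-\alpha_2})$ with $\alpha_1+\alpha_2\le1$. The anisotropic rescaling $\xi'=K^{\alpha_1}\xi$, $\eta'=K^{\alpha_1+\alpha_2}\eta$ turns the pair into $K^{O(1)}$ good pairs for a surface in the same class.
\end{itemize}
Without some such mechanism your argument does not close.

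Second, the broad bound you state, $\|(Ef_1Ef_2)^{1/2}\|_{L^p(B_R)}\lessapprox\|f\|_{2}^{2/p}\|f\|_{\infty}^{1-2/p}$, is of $L^p\to L^p$ restricted type. For $|f|\sim\mathbf{1}_F$ it gives $|F|^{1/p}$, whereas $q'=p/2$ requires $|F|^{(p-2)/p}$. Level-set pigeonholing cannot bridge this gap. Interpolating with the $L^2\to L^4$ bound reaches the line $q'=p/2$ only at $p=4$. The paper instead proves the broad estimate directly in the stronger form $\int_{B_R}|Br_AEf|^p\lessapprox\|f\|_2^{2p-4}\sup_\theta\|f_\theta\|^{4-p}_{L^2_{avg}(\theta)}$. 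The extra information comes from keeping the parallel-tube multiplicity $m$ in the incidence bound (Lemma~\ref{lem incidence}). That bound is then interpolated against the refined $L^2$ estimate (Lemma~\ref{lem L2}) inside the two-ends induction.

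Two smaller points. At $q'=p/2$ the narrow rescalings are exactly scale-invariant: the strip rescaling costs $K^{2-p/q'}=K^0$. So there is no $K^{-\alpha}$ gain, and the induction closes only through $(R/K)^{\epsilon}$. This forces the losses in the broad--narrow reduction to be $K^{c\epsilon}$ with $c<p$. Also, the propagation property is much simpler than your \L{}ojasiewicz plan. Every bad line is $O(\varepsilon_0)$-close to an axis, and $v(p_1,p_2)=(\xi_2-\xi_1,\eta_1-\eta_2)+O(\varepsilon_0|p_1-p_2|)$. Hence the $\varepsilon_0^{1/2}$-angle condition is preserved with uniform constants, which is what lets it survive the rescalings. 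A $K^{-C}$ bound whose constants depend on the polynomial would not obviously survive them.
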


We consider surfaces of the form
\[
\mathcal{S}=\{(\xi,\eta,h(\xi,\eta)):(\xi,\eta)\in[-1,1]^2\}.
\]
By a reduction argument introduced in \cite[Section 2]{GO}, it suffices to treat polynomial phase functions of the form
\begin{equation}\label{eq reduction}
h(\xi,\eta)=\xi\eta+a_{2,0}\xi^2+a_{0,2}\eta^2+\sum_{i=3}^D\sum_{j=0}^i a_{i-j,j}\,\xi^{i-j}\eta^j,
\end{equation}
whose coefficients satisfy
\begin{equation}\label{eq smallcoeff}
|a_{2,0}|+|a_{0,2}|+100^D\sum_{i=3}^D\sum_{j=0}^i |a_{i-j,j}|\le \varepsilon_0
\end{equation}
for some constant $\varepsilon_0>0$ that can be chosen to be sufficiently small.

Thus the reduced surface is a small polynomial perturbation of the hyperbolic paraboloid
\[
\{(\xi,\eta,\xi\eta):(\xi,\eta)\in[-1,1]^2\}.
\]
Throughout the paper, we shall assume that $h$ is a polynomial of the form \eqref{eq reduction} that satisfies \eqref{eq smallcoeff}.

\section{Geometry of bad lines and transversality}
We begin by studying lines in the parameter plane $[-1,1]^2$. Their images under the graph map
\[
(\xi,\eta)\mapsto (\xi,\eta,h(\xi,\eta))
\]
play an important role in the geometric analysis of the surface.

For any line $l$ in the parameter plane, either the angle between $l$ and the $\xi$-axis is less than $\frac{\pi}{4}$, or the angle between $l$ and the $\eta$-axis is less than $\frac{\pi}{4}$.

If the angle between $l$ and the $\xi$-axis is less than $\frac{\pi}{4}$, then $l$ can be written as
\[
\eta=\tan\alpha\cdot \xi+a.
\]
In this case, we let $F_{\alpha,a}$ denote the affine transformation given by translation by $(0,-a)$ followed by rotation through angle $-\alpha$. Then $F_{\alpha,a}$ sends $l$ to the line $\{\eta=0\}$.

If the angle between $l$ and the $\eta$-axis is less than $\frac{\pi}{4}$, then $l$ can be written as
\[
\xi=\tan\alpha\cdot \eta+a.
\]
In this case, we let $M_{\alpha,a}$ denote the affine transformation given by translation by $(-a,0)$ followed by rotation sending the direction of $l$ to the direction of $(0,1)$. Then $M_{\alpha,a}$ sends $l$ to the line $\{\xi=0\}$.

We now isolate the geometric obstruction relevant to bilinear decoupling, namely those lines in the parameter plane whose images on the surface are straight lines. This motivates the following definition.

\begin{definition}[Bad lines]\label{def badline}
A line $l$ in the parameter plane is called a \textbf{bad line} if one of the following holds.

\begin{enumerate}
\item[(i)] If the angle between $l$ and the $\xi$-axis is less than $\frac{\pi}{4}$, then
\[
h\circ F_{\alpha,a}^{-1}(\xi,\eta)=\eta\,\tilde h(\xi,\eta)+L(\xi),
\]
where $\tilde h(\xi,\eta)$ is a polynomial and $L(\xi)$ is a linear function of $\xi$.

\item[(ii)] If the angle between $l$ and the $\eta$-axis is less than $\frac{\pi}{4}$, then
\[
h\circ M_{\alpha,a}^{-1}(\xi,\eta)=\xi\,\tilde h(\xi,\eta)+L(\eta),
\]
where $\tilde h(\xi,\eta)$ is a polynomial and $L(\eta)$ is a linear function of $\eta$.
\end{enumerate}
\end{definition}

\begin{definition}[Good lines]\label{def goodline}
We call a line $l$ in the parameter plane \textbf{good} if its angle with each of the coordinate axes is greater than $\varepsilon_{0}^{\frac12}$.
\end{definition}

Note that in our definition, a good line is a strictly stronger notion than a line that is merely not bad. We will prove in Proposition~\ref{prop bad2} that every bad line makes angle at most $O(\varepsilon_0)$ with either the $\xi$-axis or the $\eta$-axis. Hence any line whose angle with both coordinate axes is greater than $\varepsilon_0^{1/2}$ is separated from all bad lines, which justifies Definition~\ref{def goodline}.
 The significance of Definition~\ref{def badline} is that bad lines are precisely the lines along which the restriction of $h$ is affine.  The following lemma makes this correspondence precise.
\begin{lemma}
Assume that $h$ is a polynomial. Then a straight line segment $\Gamma$ is contained in $\mathcal S$ if and only if its projection onto the parameter plane is contained in a bad line in the sense of Definition~\ref{def badline}.
\end{lemma}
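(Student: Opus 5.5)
The plan is to reduce both directions to one algebraic fact: a polynomial in one variable is affine on a nondegenerate interval if and only if it is affine identically. By symmetry I treat only case (i) of Definition~\ref{def badline}, where the line $l$ makes angle less than $\frac{\pi}{4}$ with the $\xi$-axis. Case (ii) is the same argument with the roles of $\xi,\eta$ swapped and $M_{\alpha,a}$ in place of $F_{\alpha,a}$.

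First I reformulate "straight segment in $\mathcal S$". A segment $\Gamma\subset\mathcal S$ is the image under the graph map of its projection $\pi(\Gamma)$, since the graph map is a bijection onto $\mathcal S$ with inverse $\pi$. The projection of a straight segment is either a point or a segment; it cannot be a point, because the graph has no vertical segments. So $\pi(\Gamma)$ is a nondegenerate segment contained in some line $l$. Parametrize $l$ affinely as $\gamma(s)=F_{\alpha,a}^{-1}(s,0)$. The graph map is affine in the first two coordinates, so $\Gamma$ is straight exactly when $s\mapsto h(\gamma(s))$ is affine on the parameter interval $I$ of $\pi(\Gamma)$. Write $g=h\circ F_{\alpha,a}^{-1}$, which is a polynomial because $F_{\alpha,a}$ is affine. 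Then $h(\gamma(s))=g(s,0)$, and the condition becomes: $g(\cdot,0)$ is affine on $I$.

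Next I show that this condition is equivalent to the decomposition in (i). Divide the polynomial $g$ by $\eta$, viewing it as a polynomial in $\eta$ with coefficients in $\mathbb{R}[\xi]$. This gives $g(\xi,\eta)=\eta\,\tilde h(\xi,\eta)+g(\xi,0)$, with $\tilde h$ a polynomial.
\begin{itemize}
\item If $l$ is bad, then setting $\eta=0$ in (i) gives $g(\xi,0)=L(\xi)$, which is linear. So $h$ is affine along all of $l$, and every segment of $l$ lifts to a straight segment in $\mathcal S$. The segment $\Gamma$ then lies on the straight line $\{(\gamma(s),L(s))\}$.
\item Conversely, suppose $g(\cdot,0)$ is affine on the interval $I$ of positive length. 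Then $g(\xi,0)-L(\xi)$ is a one-variable polynomial vanishing on infinitely many points, so it vanishes identically. Hence $g(\xi,0)=L(\xi)$ for all $\xi$, and the division identity above is exactly the decomposition required in (i).
\end{itemize}

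The only point needing care is reading "contained in a bad line" correctly. A segment contained in a line $l$ determines $l$ uniquely, and the case split (i) or (ii) is decided by the angle of $l$. There is also a tie case, angle exactly $\frac{\pi}{4}$, which I resolve by letting either convention apply, since the argument works for both. No genuine obstacle is expected. The key step is identity-theorem rigidity: polynomiality is what lets affineness on a short segment propagate to the whole line.
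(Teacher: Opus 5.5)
Your proof is correct and takes the same route as the paper. You normalize the line to $\{\eta=0\}$ via $F_{\alpha,a}$ (or $M_{\alpha,a}$), observe that straightness of the lift is equivalent to $g(\cdot,0)$ being affine, and use divisibility by $\eta$ to obtain the form in Definition~\ref{def badline}. Your explicit rigidity step (a polynomial affine on an interval of positive length is affine on all of $\mathbb{R}$) fills in a point the paper uses tacitly when it passes from the segment $\Gamma$ to the identity $(h\circ F_{\alpha,a}^{-1})(\xi,0)=L(\xi)$ for all $\xi$.
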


\begin{proof}
We first prove the ``only if'' direction. Let $\Gamma\subset \mathcal S$ be a nontrivial straight line segment. Since $\mathcal S$ is a graph over the $(\xi,\eta)$-plane, the projection of $\Gamma$ onto the parameter plane cannot collapse to a point. Hence its projection is a line segment contained in an affine line $l$ in the parameter plane.

Assume first that the angle between $l$ and the $\xi$-axis is less than $\frac{\pi}{4}$. After applying the affine change of coordinates $F_{\alpha,a}$ introduced before Definition~\ref{def badline}, we may assume that $l$ is given by $\eta=0$. Since $\Gamma$ is a straight line segment in $\mathbb R^3$, the restriction of the graphing function to this line is affine in $\xi$. Therefore there exists a linear function $L(\xi)$ such that
\[
(h\circ F_{\alpha,a}^{-1})(\xi,0)=L(\xi).
\]
It follows that
\[
(h\circ F_{\alpha,a}^{-1})(\xi,\eta)-L(\xi)
\]
vanishes on $\{\eta=0\}$, and hence is divisible by $\eta$. Since $h$ is a polynomial, we may write
\[
h\circ F_{\alpha,a}^{-1}(\xi,\eta)=\eta\,\widetilde h(\xi,\eta)+L(\xi)
\]
for some polynomial $\widetilde h$. Thus $l$ is a bad line by Definition~\ref{def badline}(i).

If instead the angle between $l$ and the $\eta$-axis is less than $\frac{\pi}{4}$, then the same argument, using $M_{\alpha,a}$, shows that
\[
h\circ M_{\alpha,a}^{-1}(\xi,\eta)=\xi\,\widetilde h(\xi,\eta)+L(\eta),
\]
so $l$ is a bad line by Definition~\ref{def badline}(ii).

Conversely, suppose that $l$ is a bad line. If $l$ is of the first type, then by Definition~\ref{def badline}(i),
\[
h\circ F_{\alpha,a}^{-1}(\xi,\eta)=\eta\,\widetilde h(\xi,\eta)+L(\xi).
\]
Restricting to $\eta=0$, we obtain
\[
h\circ F_{\alpha,a}^{-1}(\xi,0)=L(\xi).
\]
Hence the image of the line $\{\eta=0\}$ under the graph map for $h\circ F_{\alpha,a}^{-1}$ is
\[
\{(\xi,0,L(\xi))\},
\]
which is a straight line in $\mathbb R^3$. Therefore the corresponding portion of the original surface $\mathcal S$ is also a straight line segment. The second type is treated in the same way.

This proves the lemma.
\end{proof}

Once bad lines have been characterized geometrically, the next step is to understand how many such lines may pass through a fixed point. The following lemma gives the required finiteness property.

\begin{lemma}\label{lem finite}
There exists a constant $C_D>0$, depending only on $D$, such that for every point $p\in[-1,1]^2$, the number of bad lines passing through $p$ is at most $C_D$.
\end{lemma}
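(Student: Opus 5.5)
Fix a point $p=(\xi_0,\eta_0)\in[-1,1]^2$. By the preceding lemma, a line $l$ through $p$ is bad exactly when the restriction of $h$ to $l$ is affine. I will parametrize lines through $p$ by a unit direction $v=(\cos\theta,\sin\theta)$ and consider the one-variable polynomial
\[
g_v(s)=h(p+sv)=\sum_{k=0}^{D} c_k(v)\,s^k,\qquad c_k(v)=\frac{1}{k!}\,D^k h(p)[v,\dots,v].
\]
Each $c_k(v)$ is a homogeneous polynomial of degree $k$ in $v=(v_1,v_2)$. The line $p+\mathbb R v$ is bad if and only if $c_k(v)=0$ for all $2\le k\le D$. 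In particular, badness forces $c_2(v)=\tfrac12\,\nabla^2h(p)[v,v]=0$.

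The plan is therefore to bound the number of directions $[v]\in\mathbb{RP}^1$ with $Q_p(v):=\nabla^2h(p)[v,v]=0$. By \eqref{eq reduction} and \eqref{eq smallcoeff}, $\nabla^2 h(p)$ is a small perturbation of $\begin{pmatrix}0&1\\1&0\end{pmatrix}$. Its entries are
\[
h_{\xi\xi}=2a_{2,0}+O(\varepsilon_0 100^{-D}\cdot D^2),\qquad h_{\xi\eta}=1+O(\varepsilon_0),\qquad h_{\eta\eta}=O(\varepsilon_0),
\]
and the derivative bounds follow from \eqref{eq smallcoeff} because $|\xi|,|\eta|\le1$ and $i^2\le 100^i$. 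Hence $\det\nabla^2h(p)=-1+O(\varepsilon_0)<0$, and $Q_p$ is a nondegenerate indefinite binary quadratic form, not identically zero. A nonzero binary quadratic form vanishes on at most two projective directions; here it vanishes on exactly two, since the determinant is negative. So at most two lines through $p$ are bad, and the lemma holds with $C_D=2$. If one wants the bound to depend visibly only on $D$ (for instance to avoid using the negative curvature), a fallback works: $c_D(v)$ restricted to the highest-order part is a nonzero binary form of degree $\le D$ unless it vanishes identically. Taking the largest $k\ge2$ with $c_k\not\equiv0$ gives at most $k\le D$ root directions. In the present setting the $k=2$ argument suffices.

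The main step to verify carefully is the interaction with Definition~\ref{def badline}, which is phrased through the affine maps $F_{\alpha,a}$ and $M_{\alpha,a}$ rather than through restrictions. I will use the preceding lemma's equivalence: $l$ is bad if and only if the graph of $h$ over $l$ is a straight segment, which holds if and only if $g_v$ is affine. The equivalence between ``$h\circ F^{-1}_{\alpha,a}=\eta\tilde h+L(\xi)$'' and ``$g_v$ affine'' is exactly what that lemma's proof shows. I also need that a line through $p$ meets $[-1,1]^2$ in a nontrivial segment, so that affineness on the segment implies $g_v$ is affine as a polynomial (a polynomial affine on an interval is affine).

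The only genuine obstacle is checking the smallness estimate on the Hessian uniformly over $[-1,1]^2$. I expect this to be routine from \eqref{eq smallcoeff}: the factor $100^D$ dominates $\sum_{i\le D} i^2$.
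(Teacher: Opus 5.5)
Your proof is correct, and it takes a different route from the paper's.

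\textbf{The paper's route.} The paper parametrizes lines through $p$ by slope, $\eta=k(\xi-\xi_1)+\eta_1$, treating near-vertical lines separately. It expands $\frac{d^2}{d\xi^2}h(\xi,k(\xi-\xi_1)+\eta_1)=\sum_m c_m(k)\xi^m$ and notes that the $\xi\eta$ term makes some $c_m$ a nonzero polynomial of degree $O_D(1)$ in $k$. Counting roots then finishes the proof. This is a pure degree count, and it yields a $D$-dependent constant.

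\textbf{Your route.} You keep only the second-order Taylor coefficient at $p$. Badness forces $\nabla^2h(p)[v,v]=0$. Since $\det\nabla^2h(p)=-1+O(\varepsilon_0)<0$ (the same computation as in the proof of Proposition~\ref{prop bad1}), this nondegenerate indefinite form has exactly two null directions. This is the classical fact that a straight line on a surface runs along an asymptotic direction. It gives you more than the lemma asks:
\begin{itemize}
\item the uniform bound $C_D=2$;
\item both angular regimes handled at once via projective directions;
\item an argument that relies on the curvature hypothesis rather than on the degree;
\item essentially Proposition~\ref{prop bad2}(1) for free, since the null directions of a small perturbation of $\begin{pmatrix}0&1\\1&0\end{pmatrix}$ lie within $O(\varepsilon_0)$ of the coordinate axes.
\end{itemize}

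\textbf{One small correction.} It is not true that every line through $p$ meets $[-1,1]^2$ in a nontrivial segment; it fails at the corners, e.g. $p=(1,1)$ with direction $(1,-1)$. You do not need this claim. Definition~\ref{def badline} is an identity of polynomials, so restricting $h\circ F_{\alpha,a}^{-1}=\eta\tilde h+L(\xi)$ to $\eta=0$ already shows that $s\mapsto h(p+sv)$ is affine on all of $\mathbb{R}$. Only this direction of the equivalence is used for the upper bound.
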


\begin{proof}
Fix $p=(\xi_1,\eta_1)$. We first consider lines through $p$ whose angle with the $\xi$-axis is less than $\frac{\pi}{4}$. Every such line can be written in the form
\[
\eta=k(\xi-\xi_1)+\eta_1
\]
for some slope $k$.

By Definition~\ref{def badline}, such a line is bad if and only if the restriction of $h$ to the line is affine as a function of $\xi$. Equivalently,
\[
\frac{d^2}{d\xi^2}h(\xi,k(\xi-\xi_1)+\eta_1)\equiv 0
\]
as a polynomial in $\xi$.

Define
\[
Q_p(\xi,k):=\frac{d^2}{d\xi^2}h(\xi,k(\xi-\xi_1)+\eta_1).
\]
Since $h$ is a polynomial of degree at most $D$, the function $Q_p(\xi,k)$ is a polynomial in $(\xi,k)$, of degree at most $D-2$ in $\xi$ and of degree at most $D$ in $k$. Write
\[
Q_p(\xi,k)=\sum_{m=0}^{D-2} c_m(k)\,\xi^m,
\]
where each $c_m(k)$ is a polynomial in $k$ of degree at most $D$.

If the line is bad, then $Q_p(\xi,k)\equiv 0$, and hence
\[
c_m(k)=0\qquad \text{for all } m=0,\dots,D-2.
\]
Thus $k$ must be a common zero of finitely many polynomials in $k$. These polynomials are not all identically zero: indeed, the term $\xi\eta$ in $h$ contributes
\[
\frac{d^2}{d\xi^2}\bigl(\xi\cdot (k(\xi-\xi_1)+\eta_1)\bigr)=2k
\]
to $Q_p(\xi,k)$. Therefore at least one of the polynomials $c_m(k)$ is nonzero, and so $k$ must lie in the zero set of a nonzero polynomial of degree $O_D(1)$. Hence there are at most $O_D(1)$ possible values of $k$.

The case of lines through $p$ whose angle with the $\eta$-axis is less than $\frac{\pi}{4}$ can be treated in the same way. Combining the two cases completes the proof.
\end{proof}

We now pass from bad lines to a pointwise notion that is better suited to the multiscale decomposition. For a fixed point $\zeta_1\in [-1,1]^2$ in the parameter plane, we identify those points $\zeta_2\in [-1,1]^2$ that give rise to degenerate interactions with $\zeta_1$.

\begin{definition}[Bad points for $\zeta_1$]\label{def badpoint}
Fix $\zeta_1\in [-1,1]^2$. We say that a point $\zeta_2\in [-1,1]^2$ is a \textbf{bad point associated with $\zeta_1$} if either of the following holds:
\begin{enumerate}
\item the normal vector to $\mathcal S$ at $\zeta_2$ coincides with the normal vector to $\mathcal S$ at $\zeta_1$;
\item the tangent planes at $\zeta_1$ and $\zeta_2$ intersect in a line, and there exists a bad line passing through $\zeta_2$ whose corresponding straight line on $\mathcal S$ is parallel to this intersection line.
\end{enumerate}
\end{definition}

We denote by $\mathcal B_{\zeta_1}^1$ the set of points satisfying Definition~\ref{def badpoint}(1), and by $\mathcal B_{\zeta_1}^2$ the set of points satisfying Definition~\ref{def badpoint}(2). We shall prove that for $h$ of the form \eqref{eq reduction}, the only bad point of the first kind associated with $\zeta_1$ is $\zeta_1$ itself.

\begin{prop}\label{prop bad1}
For every fixed $\zeta_1\in[-1,1]^2$, one has
\[
\mathcal B_{\zeta_1}^1=\{\zeta_1\}.
\]
\end{prop}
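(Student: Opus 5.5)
The plan is to show that the Gauss-type map $\zeta\mapsto \nabla h(\zeta)$ is injective on $[-1,1]^2$. The normal to $\mathcal S$ at $\zeta=(\xi,\eta)$ is proportional to $(-\partial_\xi h,-\partial_\eta h,1)$, and the third component is normalized to $1$. Two normals therefore coincide (as directions, and hence as unit normals) exactly when $\nabla h(\zeta_1)=\nabla h(\zeta_2)$. So it suffices to prove that $\nabla h(\zeta_1)=\nabla h(\zeta_2)$ forces $\zeta_1=\zeta_2$. Clearly $\zeta_1\in\mathcal B^1_{\zeta_1}$, which gives the reverse inclusion.

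To get injectivity, I write $\nabla h(\zeta)=(\eta,\xi)+G(\zeta)$, where $G$ collects the contributions of $a_{2,0}\xi^2+a_{0,2}\eta^2$ and of the higher-order terms in \eqref{eq reduction}. The map $J(\xi,\eta)=(\eta,\xi)$ is an isometry, so it is enough to show that $G$ is Lipschitz on the convex set $[-1,1]^2$ with constant strictly less than $1$. Then
\[
|\nabla h(\zeta_1)-\nabla h(\zeta_2)|\ge |\zeta_1-\zeta_2|-|G(\zeta_1)-G(\zeta_2)|\ge (1-c)|\zeta_1-\zeta_2|,
\]
which vanishes only when $\zeta_1=\zeta_2$.

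The Lipschitz bound follows from the mean value inequality applied along the segment from $\zeta_1$ to $\zeta_2$, which stays in the square. The Jacobian $DG$ is the Hessian of $h-\xi\eta$. Its entries coming from the quadratic terms are bounded by $2(|a_{2,0}|+|a_{0,2}|)\le 2\varepsilon_0$. For a monomial $a_{i-j,j}\xi^{i-j}\eta^j$, each second derivative is bounded on $[-1,1]^2$ by $i^2|a_{i-j,j}|\le D^2|a_{i-j,j}|$. Summing and using \eqref{eq smallcoeff}, these entries are at most $D^2\,100^{-D}\varepsilon_0\le \varepsilon_0$. Hence $\|DG\|\lesssim\varepsilon_0$ uniformly, and with $\varepsilon_0$ small we get $c\le 1/2$.

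The only point needing any care is this uniform second-derivative bound, where the $100^D$ weight in \eqref{eq smallcoeff} absorbs the polynomial growth $D^2$ of the derivative constants. Everything else is a direct consequence of the graph parametrization.
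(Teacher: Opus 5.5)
Your proof is correct and follows essentially the same route as the paper: both reduce to injectivity of $\nabla h$ on the convex square, using that $\nabla h$ is an $O(\varepsilon_0)$-perturbation of $(\xi,\eta)\mapsto(\eta,\xi)$ along the segment joining $\zeta_1$ and $\zeta_2$. The paper finishes by showing the averaged Hessian $\int_0^1 D^2h(\zeta_1+tv)\,dt$ is invertible, since its determinant is $-1+O(\varepsilon_0)$. Your isometry-plus-Lipschitz argument instead yields the slightly stronger quantitative bound $|\nabla h(\zeta_1)-\nabla h(\zeta_2)|\ge\frac12|\zeta_1-\zeta_2|$.
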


This proposition follows from the fact that the hyperbolic paraboloid $h(\xi,\eta)=\xi\eta$ has injective Gauss map and that we are working with a sufficiently small perturbation of this surface. We include a proof for completeness.
\begin{proof}
By Definition~\ref{def badpoint}(1), a point $\zeta_2\in[-1,1]^2$ belongs to $\mathcal B_{\zeta_1}^1$ if and only if the normal vector to $\mathcal S$ at $\zeta_2$ coincides with the normal vector to $\mathcal S$ at $\zeta_1$. Since $\mathcal S$ is the graph of $h$, this is equivalent to
\[
\nabla h(\zeta_2)=\nabla h(\zeta_1).
\]
Thus it suffices to prove that the gradient map
\[
G(\xi,\eta):=\nabla h(\xi,\eta)=(h_\xi(\xi,\eta),h_\eta(\xi,\eta))
\]
is injective on $[-1,1]^2$.

Recall that
\[
h(\xi,\eta)=\xi\eta+a_{2,0}\xi^2+a_{0,2}\eta^2+\sum_{i=3}^D\sum_{j=0}^i a_{i-j,j}\,\xi^{i-j}\eta^j,
\]
and that the coefficients satisfy the smallness condition \eqref{eq smallcoeff}. It follows uniformly on $[-1,1]^2$ that
\[
h_{\xi\eta}(\xi,\eta)=1+O(\varepsilon_0),\qquad
h_{\xi\xi}(\xi,\eta)=O(\varepsilon_0),\qquad
h_{\eta\eta}(\xi,\eta)=O(\varepsilon_0).
\]
Hence the Jacobian matrix of $G$, namely the Hessian matrix of $h$, is
\[
DG(\xi,\eta)=
\begin{pmatrix}
h_{\xi\xi}(\xi,\eta) & h_{\xi\eta}(\xi,\eta)\\
h_{\eta\xi}(\xi,\eta) & h_{\eta\eta}(\xi,\eta)
\end{pmatrix},
\]
and its determinant satisfies
\[
\det DG(\xi,\eta)
=
h_{\xi\xi}(\xi,\eta)h_{\eta\eta}(\xi,\eta)-h_{\xi\eta}(\xi,\eta)^2
=
-1+O(\varepsilon_0).
\]
If $\varepsilon_0>0$ is sufficiently small, then
\[
\det DG(\xi,\eta)\neq 0
\qquad\text{for all }(\xi,\eta)\in[-1,1]^2.
\]

Now suppose that $\zeta_2\in \mathcal B_{\zeta_1}^1$. Then
\[
G(\zeta_2)-G(\zeta_1)=0.
\]
Let $v:=\zeta_2-\zeta_1$. By the fundamental theorem of calculus,
\[
0
=
G(\zeta_2)-G(\zeta_1)
=
\left(\int_0^1 DG(\zeta_1+t(\zeta_2-\zeta_1))\,dt\right)(\zeta_2-\zeta_1).
\]
Set
\[
A:=\int_0^1 DG(\zeta_1+t v)\,dt.
\]
Then
\[
A=
\begin{pmatrix}
O(\varepsilon_0) & 1+O(\varepsilon_0)\\
1+O(\varepsilon_0) & O(\varepsilon_0)
\end{pmatrix},
\]
so
\[
\det A=-1+O(\varepsilon_0)\neq 0
\]
provided $\varepsilon_0$ is sufficiently small. Hence $A$ is invertible, and the identity
\[
Av=0
\]
implies $v=0$. Therefore $\zeta_2=\zeta_1$, and so
\[
\mathcal B_{\zeta_1}^1=\{\zeta_1\}.
\]
\end{proof}

Next, we show that if $\zeta_2$ is a bad point of the second kind associated with $\zeta_1$, then $\zeta_1$ and $\zeta_2$ lie on a common bad line. In particular, bad points of the second kind occur only along bad lines. We also prove that, under \eqref{eq smallcoeff}, every bad line is nearly parallel to one of the coordinate axes.

\begin{prop}\label{prop bad2}
\begin{enumerate}
    \item Every bad line makes angle $\lesssim \varepsilon_0$ with either the $\xi$-axis or the $\eta$-axis.
    \item Fix $\zeta_1\in[-1,1]^2$. Then every point of $\mathcal B_{\zeta_1}^2$ lies on a bad line passing through $\zeta_1$.
\end{enumerate}
\end{prop}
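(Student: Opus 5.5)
For part (1), I will work directly with the characterization from the lemma above and from the proof of Lemma~\ref{lem finite}: a line is bad if and only if the restriction of $h$ to it is affine. Parametrize a line by $\zeta(s)=\zeta_0+s\,v$, where $v=(\cos\theta,\sin\theta)$ is a unit vector. Then $\frac{d^2}{ds^2}h(\zeta_0+sv)=v^{T}\nabla^2 h(\zeta_0+sv)\,v$, and this must vanish identically in $s$.

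Using \eqref{eq smallcoeff} as in the proof of Proposition~\ref{prop bad1}, the Hessian is
\[
\nabla^2 h=\begin{pmatrix}O(\varepsilon_0)&1+O(\varepsilon_0)\\ 1+O(\varepsilon_0)&O(\varepsilon_0)\end{pmatrix}.
\]
Evaluating at $s=0$ therefore gives
\[
0=2\cos\theta\sin\theta\,(1+O(\varepsilon_0))+O(\varepsilon_0).
\]
Hence $|\sin 2\theta|\lesssim\varepsilon_0$, so $\theta$ lies within $O(\varepsilon_0)$ of a multiple of $\pi/2$. This is exactly the claim that every bad line nearly parallels one of the coordinate axes.

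For part (2), fix $\zeta_2\in\mathcal B^2_{\zeta_1}$. Then $\zeta_2\neq\zeta_1$, since otherwise the tangent planes coincide and do not meet in a line. By definition there is a bad line $l$ through $\zeta_2$ with direction $v$ such that the surface line $\Gamma$ over $l$ is parallel to $T_{\zeta_1}\mathcal S\cap T_{\zeta_2}\mathcal S$. Since $\Gamma\subset\mathcal S$ passes through the point over $\zeta_2$, it lies in $T_{\zeta_2}\mathcal S$. Its direction $(v,\nabla h(\zeta_2)\cdot v)$ must also lie in $T_{\zeta_1}$, so
\[
(\nabla h(\zeta_1)-\nabla h(\zeta_2))\cdot v=0.
\]
Along $l$ the function $h$ is affine, so $\nabla h\cdot v$ is constant on $l$. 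The goal is to show $\zeta_1\in l$.

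Write $\zeta_1-\zeta_2=t v+u\,v^{\perp}$. By the fundamental theorem of calculus,
\[
(\nabla h(\zeta_1)-\nabla h(\zeta_2))\cdot v=v^{T}A(\zeta_1-\zeta_2),
\]
where $A=\int_0^1\nabla^2h(\zeta_2+\tau(\zeta_1-\zeta_2))\,d\tau$. The $t$-component contributes $t\,v^TAv$. The term $v^T\nabla^2 h\,v$ vanishes only on $l$, not along the segment, so I will handle this more carefully below.

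A cleaner route is to integrate in two steps. First move along $l$ from $\zeta_2$ to $\zeta_2+tv$; this changes nothing, since $\nabla h\cdot v$ is constant on $l$. Then move in the direction $v^{\perp}$ by $u$, which gives
\[
0=u\int_0^1 v^T\nabla^2h(\zeta_2+tv+\tau u v^{\perp})\,v^{\perp}\,d\tau.
\]
By part (1), $v$ is within $O(\varepsilon_0)$ of $e_1$ or $e_2$, so $v^{\perp}$ is within $O(\varepsilon_0)$ of the other axis. Then $v^T\nabla^2h\,v^{\perp}=\pm(1+O(\varepsilon_0))\neq0$ uniformly. This forces $u=0$, that is, $\zeta_1\in l$. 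So $l$ is a bad line through both $\zeta_1$ and $\zeta_2$, as required.

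The main point needing care is that the intermediate point $\zeta_2+tv$ may leave $[-1,1]^2$. I will avoid this either by noting that $h$ is a polynomial with the bounds \eqref{eq smallcoeff} valid on a larger box such as $[-3,3]^2$ (the factor $100^D$ makes this harmless), or by using the path through $\zeta_1-uv^{\perp}$ instead.
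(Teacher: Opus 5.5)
Your proof is correct and is essentially the paper's argument. Part (1) is the same computation of the second derivative of $h$ along the line, written as $v^{T}\nabla^2 h\,v$ rather than via the slope $k$. In part (2), your transverse integration of $v^{T}\nabla^2 h\,v^{\perp}=\pm(1+O(\varepsilon_0))$ is the coordinate-free form of the paper's identity $g_\xi(\zeta_1)-g_\xi(\zeta_2)=\eta_1\,\partial_\xi\tilde h(\zeta_1)$, with $\partial_\xi\tilde h=1+O(\varepsilon_0)$, obtained after rotating $l$ to $\{\eta=0\}$.

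One small slip: $\zeta_1-uv^{\perp}$ is the same point as $\zeta_2+tv$, so your ``alternative path'' is not actually an alternative. It is the enlarged-box remark, valid because of the $100^{D}$ factor in \eqref{eq smallcoeff}, that handles the foot of the perpendicular possibly lying slightly outside $[-1,1]^2$.
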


\begin{proof}

(1) Write
\[
h(\xi,\eta)=\xi\eta+\rho(\xi,\eta),
\]
where
\[
\rho(\xi,\eta)=a_{2,0}\xi^2+a_{0,2}\eta^2+\sum_{i=3}^D\sum_{j=0}^i a_{i-j,j}\xi^{i-j}\eta^j.
\]
By \eqref{eq smallcoeff}, all coefficients of $\rho$ are $O(\varepsilon_0)$.

Suppose the bad line $l$ in the original coordinates is
\[
\eta=k(\xi-\xi_0)+\eta_0.
\]
Since the angle between $l$ and the $\xi$-axis is less than $\frac{\pi}{4}$, we have $|k|<1$. Since $l$ is bad, the restriction of $h$ to $l$ is affine in $\xi$, and hence
\[
\frac{d^2}{d\xi^2}h\bigl(\xi,k(\xi-\xi_0)+\eta_0\bigr)\equiv 0.
\]
On the other hand,
\[
\frac{d^2}{d\xi^2}h\bigl(\xi,k(\xi-\xi_0)+\eta_0\bigr)
=
2k+\frac{d^2}{d\xi^2}\rho\bigl(\xi,k(\xi-\xi_0)+\eta_0\bigr).
\]
Because $|k|<1$ and all coefficients of $\rho$ are $O(\varepsilon_0)$, the polynomial
\[
\frac{d^2}{d\xi^2}\rho\bigl(\xi,k(\xi-\xi_0)+\eta_0\bigr)
\]
has coefficients $O(\varepsilon_0)$. Therefore
\[
2k+O(\varepsilon_0)\equiv 0,
\]
and hence
\[
k=O(\varepsilon_0).
\]
Thus, the rotation angle $\alpha=\arctan k$ also satisfies $\alpha=O(\varepsilon_0)$. The case where the bad line makes angle at most $\frac{\pi}{4}$ with the $\eta$-axis is treated identically. This proves part (1).

(2) It suffices to show that if $\zeta_2$ is a bad point of the second kind associated with $\zeta_1$, and if $l$ is the bad line passing through $\zeta_2$ and parallel to the projection onto the parameter plane of the intersection line of $T_{\zeta_1}\mathcal{S}$ and $T_{\zeta_2}\mathcal{S}$, then $\zeta_1$ must also lie on $l$.

We only treat the case where the angle between $l$ and the $\xi$-axis is less than $\frac{\pi}{4}$, since the other case is identical. After applying the affine change of variables $F_{\alpha,a}$ from Definition~\ref{def badline}, we may assume that
\[
l=\{\eta=0\},
\]
and that
\[
g(\xi,\eta):=h\circ F_{\alpha,a}^{-1}(\xi,\eta)
\]
has the form
\[
g(\xi,\eta)=\eta\,\tilde h(\xi,\eta)+L(\xi),
\]
where $L$ is a linear function.

We claim that
\[
\partial_\xi \tilde h(\xi,\eta)=1+O(\varepsilon_0)
\]
uniformly on $[-1,1]^2$.

Since $F_{\alpha,a}$ is a translation followed by a rotation through an angle $O(\varepsilon_0)$, the transformed polynomial $g=h\circ F_{\alpha,a}^{-1}$ has the form
\[
g(\xi,\eta)=\xi\eta+r(\xi,\eta)+\ell(\xi,\eta)+c,
\]
where $\ell$ is due to a small rotation angle to compensate for the linear term of the preceding $\xi\eta$, $r$ is due to higher-order terms arising from the rotation of $\rho$, and $c$ is a constant. Every coefficient of $\ell$ and $r$ is $O(\varepsilon_0)$. Because $\eta=0$ is a bad line, the restriction $g(\xi,0)$ is linear in $\xi$. After subtracting this linear part, the remainder vanishes on $\{\eta=0\}$, hence is divisible by $\eta$. Therefore we may write
\[
g(\xi,\eta)-L(\xi)=\eta\bigl(\xi+\rho_1(\xi,\eta)\bigr),
\]
where $\rho_1$ is a polynomial all of whose coefficients are $O(\varepsilon_0)$. Thus
\[
\tilde h(\xi,\eta)=\xi+\rho_1(\xi,\eta),
\]
and so
\[
\partial_\xi \tilde h(\xi,\eta)=1+O(\varepsilon_0),
\]
as claimed.

Now write
\[
\zeta_1=(\xi_1,\eta_1),\qquad \zeta_2=(\xi_2,0).
\]
For the graph $z=g(\xi,\eta)$, the projection onto the parameter plane of the intersection line of the tangent planes at $\zeta_1$ and $\zeta_2$ has direction
\[
\bigl(g_\eta(\zeta_2)-g_\eta(\zeta_1),\, g_\xi(\zeta_1)-g_\xi(\zeta_2)\bigr).
\]
Since $\zeta_2$ is a bad point of the second kind associated with $\zeta_1$, this direction is parallel to the bad line $\eta=0$.
Hence its second component must vanish:
\[
g_\xi(\zeta_1)-g_\xi(\zeta_2)=0.
\]
Using
\[
g_\xi(\xi,\eta)=\eta\,\partial_\xi \tilde h(\xi,\eta)+L'(\xi),
\]
and the fact that $L'$ is constant, we obtain
\[
0=g_\xi(\zeta_1)-g_\xi(\zeta_2)
=\eta_1\,\partial_\xi \tilde h(\xi_1,\eta_1).
\]
Since $\partial_\xi \tilde h(\xi_1,\eta_1)=1+O(\varepsilon_0)$ and $\varepsilon_0$ is sufficiently small, this factor is nonzero. Therefore
\[
\eta_1=0.
\]
Hence $\zeta_1$ also lies on $\{\eta=0\}$, and therefore on the original bad line $l$. This proves the proposition.
\end{proof}

{This bad-line geometry is analogous in spirit to the exceptional-set analysis of Buschenhenke--Müller--Vargas \cite{BMV3}. In their work, the failure of strong transversality is described through the factorization of the hyperbolic transversality quantity, which leads to exceptional level curves and rectangular regions; see in particular \cite[Definition~3.2, Lemma~3.5, and Lemma~4.3]{BMV3}. Our formulation is different and more directly geometric: we isolate the actual affine lines in the parameter plane whose lifts are straight lines on the surface, and prove that such lines are nearly parallel to the coordinate axes and that bad points of the second kind lie on bad lines through the fixed base point.}

We have shown that arbitrary good line has angles $\gtrsim \varepsilon_{0}^{\frac{1}{2}}$ with all the bad lines intersecting this good line.
 Moreover, we note that any two distinct points on a common bad line are bad points of the second kind associated with each other. Indeed, let $p_1,p_2\in[-1,1]^2$ be two distinct points lying on a bad line. By the lemma characterizing bad lines, the segment joining the corresponding points on $\mathcal S$ is a straight line segment contained in $\mathcal S$. Hence this straight line is contained in both tangent planes $T_{p_1}\mathcal S$ and $T_{p_2}\mathcal S$. Since these tangent planes are distinct, their intersection is exactly this line. Therefore $p_2$ is a bad point of the second kind associated with $p_1$, and symmetrically $p_1$ is a bad point of the second kind associated with $p_2$.

As a consequence, for two distinct points $p_1,p_2\in[-1,1]^2$, the bad-point relation is symmetric, that is, $p_2$ is a bad point associated with $p_1$ if and only if $p_1$ is a bad point associated with $p_2$.

From the preceding propositions, we immediately obtain the following corollary.

\begin{corollary}\label{cor 1}
Let $l$ be a good line in the parameter plane, and let $p_1,p_2\in l$ be two distinct points. Then $p_1$ and $p_2$ are not bad points associated with each other. In particular, for $j=1,2$, the line through $p_j$ that is parallel to the projection onto the parameter plane of the intersection line of $T_{p_1}\mathcal S$ and $T_{p_2}\mathcal S$ is a good line.
\end{corollary}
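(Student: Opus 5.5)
The plan is to argue by contradiction, using Propositions \ref{prop bad1} and \ref{prop bad2} to convert ``bad point'' into ``lies on a common bad line,'' and then to use the angle separation between good lines and bad lines.

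\emph{First assertion.} Suppose $p_2$ is a bad point associated with $p_1$. Since $p_1\neq p_2$, Proposition \ref{prop bad1} rules out the first kind, so $p_2\in\mathcal B^2_{p_1}$. By Proposition \ref{prop bad2}(2), $p_2$ then lies on a bad line $l'$ passing through $p_1$. Thus $l$ and $l'$ are two lines through the two distinct points $p_1,p_2$, so $l=l'$. This is impossible, because $l$ is good: by Definition \ref{def goodline} it makes angle $>\varepsilon_0^{1/2}$ with both axes, whereas by Proposition \ref{prop bad2}(1) every bad line makes angle $\lesssim\varepsilon_0\ll\varepsilon_0^{1/2}$ with one axis. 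By the symmetry of the bad-point relation noted before the corollary, the same holds with $p_1$ and $p_2$ interchanged.

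\emph{Second assertion, setup.} Since $\nabla h(p_1)\neq\nabla h(p_2)$ by Proposition \ref{prop bad1}, the tangent planes meet in a line. Its projection has direction
\[
v=\bigl(h_\eta(p_2)-h_\eta(p_1),\,h_\xi(p_1)-h_\xi(p_2)\bigr).
\]
Let $l_j$ be the line through $p_j$ with direction $v$. We must show that $v$ makes angle $>\varepsilon_0^{1/2}$ with both axes. Knowing only that $l_j$ is not bad is not enough here, since Definition \ref{def goodline} asks for more than not being bad. So I will compute directly. Write $p_2-p_1=w=(w_1,w_2)$ and $\nabla h(p_2)-\nabla h(p_1)=Aw$, with $A$ the averaged Hessian as in the proof of Proposition \ref{prop bad1}. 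Then $A=\begin{pmatrix}O(\varepsilon_0)&1+O(\varepsilon_0)\\ 1+O(\varepsilon_0)&O(\varepsilon_0)\end{pmatrix}$, so
\[
v=\bigl(w_1+O(\varepsilon_0|w|),\,-w_2+O(\varepsilon_0|w|)\bigr).
\]
In other words, $v$ is essentially the reflection of $w$ across the $\xi$-axis, up to an $O(\varepsilon_0)$ relative error.

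\emph{Second assertion, angle bound.} Since $l$ is good, $|w_1|$ and $|w_2|$ are each $\geq\sin(\varepsilon_0^{1/2})|w|\approx\varepsilon_0^{1/2}|w|$. The $O(\varepsilon_0|w|)$ perturbation therefore changes the components only by a relative amount $O(\varepsilon_0^{1/2})$. Reflection preserves the angles with both axes, so $v$ makes angle at least $\varepsilon_0^{1/2}-O(\varepsilon_0)$ with each axis. This angle is comparable to $\varepsilon_0^{1/2}$, which is far from the $O(\varepsilon_0)$-neighbourhood of the axes containing all bad lines.

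\emph{Main obstacle.} The difficulty is this loss of $O(\varepsilon_0)$ against the strict threshold $\varepsilon_0^{1/2}$ in Definition \ref{def goodline}. I would resolve it by interpreting ``good'' with a harmless constant, e.g.\ requiring angle $>\tfrac12\varepsilon_0^{1/2}$ for the propagated lines. Alternatively, I would note that what the later argument actually uses is only separation $\gg\varepsilon_0$ from bad lines, and $l_j$ has that. I would state explicitly which of these conventions is adopted.
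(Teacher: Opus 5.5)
Your proposal is correct and follows the paper's proof essentially step for step: the same contradiction through Propositions~\ref{prop bad1} and~\ref{prop bad2} for the first assertion, and the same first-order expansion $v(p_1,p_2)=\bigl(w_1+O(\varepsilon_0|w|),\,-w_2+O(\varepsilon_0|w|)\bigr)$ for the second. The obstacle you flag is a real defect of the statement, not just of your write-up. The paper simply asserts that the perturbed slope is still $\ge\varepsilon_0^{1/2}$, and this can fail: for $h=\xi\eta+\tfrac{\varepsilon_0}{2}\xi^2$ one gets $v=(w_1,\,-w_2-\varepsilon_0w_1)$, so a good $l$ of slope $-s$ with $\tan\varepsilon_0^{1/2}<s<\tan\varepsilon_0^{1/2}+\varepsilon_0$ produces $l_j$ of slope $s-\varepsilon_0<\tan\varepsilon_0^{1/2}$. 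Your relaxed threshold $\varepsilon_0^{1/2}-O(\varepsilon_0)$, or separation $\gg\varepsilon_0$ from bad lines (which is all the later argument uses), is therefore the right way to state the conclusion.
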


\begin{proof}
{
We argue by contradiction. Suppose that $p_2$ is a bad point associated with $p_1$.
Since $p_1 \neq p_2$, Proposition~\ref{prop bad1} implies that $p_2$ can not be a bad point
of the first kind associated with $p_1$. Hence $p_2$ must be a bad point of the
second kind associated with $p_1$. By Proposition~\ref{prop bad2}, every bad point of the second kind associated with $p_1$
lies on a bad line passing through $p_1$. Therefore $p_1$ and $p_2$ lie on a common
bad line. Since $p_1$ and $p_2$ are distinct and both lie on $l$, this bad line must
coincide with $l$, contradicting the assumption that $l$ is a good line.
Thus $p_1$ and $p_2$ are not bad points associated with each other.

Furthermore, we now prove that the line through $p_j$ parallel to the projection onto the parameter plane of the intersection line of $T_{p_1}\mathcal{S}$ and $T_{p_2}\mathcal{S}$ is a good line. We denote these lines by $l_j$ for distinct $j$.

Without loss of generality, we assume that the line $l$ makes an angle $\leq \frac{\pi}{4}$ with the $\xi$-axis. Since $l$ is a good line, we have
\[
\left|\frac{\eta_{2}-\eta_{1}}{\xi_{2}-\xi_{1}}\right| \geq \varepsilon_{0}^{\frac{1}{2}}.
\]

For the surface $\mathcal{S}=\{(\xi,\eta,h(\xi,\eta)):(\xi,\eta)\in[-1,1]^{2}\}$, the projection of the intersection line of $T_{p_1}\mathcal{S}$ and $T_{p_2}\mathcal{S}$ is given by
\begin{equation}
\begin{aligned}
& \bigl(h_{\eta}(p_{2})-h_{\eta}(p_{1}),\, h_{\xi}(p_{1})-h_{\xi}(p_{2})\bigr) \\
&= \bigl(\xi_{2}-\xi_{1}+\rho_{\eta}(\xi_{2},\eta_{2})-\rho_{\eta}(\xi_{1},\eta_{1}),\,
\eta_{1}-\eta_{2}+\rho_{\xi}(\xi_{1},\eta_{1})-\rho_{\xi}(\xi_{2},\eta_{2})\bigr).
\end{aligned}
\end{equation}

Since $|\nabla\rho(\xi_{1},\eta_{1})-\nabla\rho(\xi_{2},\eta_{2})|\lesssim O(\varepsilon_{0})|(\xi_{1},\eta_{1})-(\xi_{2},\eta_{2})|$, we have
\[
\left|\frac{\eta_{1}-\eta_{2}+\rho_{\xi}(\xi_{1},\eta_{1})-\rho_{\xi}(\xi_{2},\eta_{2})}{\xi_{2}-\xi_{1}+\rho_{\eta}(\xi_{2},\eta_{2})-\rho_{\eta}(\xi_{1},\eta_{1})}\right| \geq \varepsilon_{0}^{\frac{1}{2}}.
\]

Thus, the line $l_j$ makes an angle of at least $\varepsilon_{0}^{\frac{1}{2}}$ with every bad line for $j=1,2$.
}
\end{proof}

We now pass from the pointwise notion of badness to its square-scale analogue. This allows us to distinguish between pairs of frequency squares that behave transversely and those that do not. Let $\Delta>0$ be a scale parameter. For pairs of $\Delta$-squares at distance $\sim 1$, we introduce the notions of bad pairs and good pairs.

\begin{definition}[Bad pair]
For two $\Delta$-squares $\tau_1,\tau_2$ at distance $\sim 1$, we call $(\tau_1,\tau_2)$ a $\Delta$-bad pair if there exist points $\zeta_1\in\tau_1$ and $\zeta_2\in\tau_2$ such that $\zeta_2$ is a bad point associated with $\zeta_1$.
\end{definition}

\begin{definition}[Good pair]\label{def goodpair}
For two $\Delta$-squares $\tau_1,\tau_2$ at distance $\sim 1$, we call $(\tau_1,\tau_2)$ a $\Delta$-good pair if for every $p_1\in\tau_1$ and $p_2\in\tau_2$, the line joining $p_1$ and $p_2$ is a good line.
\end{definition}

One important feature of our definitions is that good configurations are stable under small perturbations. In particular, if $p_2$ is not a bad point associated with $p_1$, then sufficiently small squares around $p_1$ and $p_2$ form a good pair.

\begin{lemma}\label{lem stability}
{ Let $p_1,p_2\in[-1,1]^2$ be two distinct points with unit distance in a good line.} Then whenever $\tau_1,\tau_2$ are squares of side length at most $\varepsilon_0$ containing $p_1$ and $p_2$, respectively, the pair $(\tau_1,\tau_2)$ is a good pair.

Moreover, let $v(p_1,p_2)$ denote the projection onto the parameter plane of the intersection line of the tangent planes $T_{p_1}\mathcal S$ and $T_{p_2}\mathcal S$. Then for each $j\in\{1,2\}$, any line passing through a point of $\tau_j$ and parallel to $v(p_1,p_2)$ is a good line.
\end{lemma}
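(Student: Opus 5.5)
The plan is a direct perturbation argument. Since $|p_1-p_2|=1$, moving the endpoints by $O(\varepsilon_0)$ changes the relevant directions by only $O(\varepsilon_0)$. This is much smaller than the threshold $\varepsilon_0^{1/2}$ in Definition~\ref{def goodline}.

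\emph{First assertion.} Let $q_1\in\tau_1$ and $q_2\in\tau_2$ be arbitrary. Since $\tau_j$ has side length at most $\varepsilon_0$ and contains $p_j$, we have $|q_j-p_j|\le\sqrt2\,\varepsilon_0$. Hence
\[
q_2-q_1=(p_2-p_1)+O(\varepsilon_0),\qquad |p_2-p_1|=1 .
\]
So the unit direction of $q_2-q_1$ differs from that of $p_2-p_1$ by $O(\varepsilon_0)$, and the angles of the line $q_1q_2$ with the two coordinate axes differ from those of $l$ by $O(\varepsilon_0)$. Since $l$ makes angle $>\varepsilon_0^{1/2}$ with each axis and $O(\varepsilon_0)\ll\varepsilon_0^{1/2}$, the line $q_1q_2$ keeps an angle $\ge\varepsilon_0^{1/2}-C\varepsilon_0$ with each axis. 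By Proposition~\ref{prop bad2}(1), every bad line makes angle $\lesssim\varepsilon_0$ with an axis, so this quantitative version is what is used later. Because $q_1,q_2$ were arbitrary, $(\tau_1,\tau_2)$ is a good pair in the sense of Definition~\ref{def goodpair}.

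\emph{Second assertion.} A line through a point of $\tau_j$ parallel to $v(p_1,p_2)$ has the same direction as the line $l_j$ through $p_j$ parallel to $v(p_1,p_2)$. Goodness depends only on direction, so it suffices to show that $v(p_1,p_2)$ makes a large angle with both axes. I will follow the computation in Corollary~\ref{cor 1}, using $h=\xi\eta+\rho$. That computation gives
\[
v(p_1,p_2)=\bigl(\xi_2-\xi_1,\ \eta_1-\eta_2\bigr)+O(\varepsilon_0|p_1-p_2|).
\]
Here the error bound uses $\|\nabla^2\rho\|=O(\varepsilon_0)$ and the mean value theorem. The main vector is the reflection of $p_2-p_1$ across the $\xi$-axis, so it has length $1$ and makes the same angles with the axes as $l$. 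The error is $O(\varepsilon_0)$, so $v$ makes angle at least $\varepsilon_0^{1/2}-C\varepsilon_0$ with each axis. Hence every such line is good, and it is again separated from all bad lines by Proposition~\ref{prop bad2}(1).

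\emph{Main obstacle.} The only delicate point is bookkeeping of the threshold. Definition~\ref{def goodline} uses the sharp cutoff $\varepsilon_0^{1/2}$, while the perturbation costs $C\varepsilon_0$. I would handle this in one of two ways:
\begin{itemize}
\item interpret ``good'' with a harmless constant loss, e.g.\ angle $\ge\tfrac12\varepsilon_0^{1/2}$, which is all that is needed to stay away from bad lines, since $\tfrac12\varepsilon_0^{1/2}\gg C\varepsilon_0$ for $\varepsilon_0$ small;
\item or assume the good line $l$ has angle $>2\varepsilon_0^{1/2}$ to begin with.
\end{itemize}
Either way, the unit separation $|p_1-p_2|=1$ is what makes every perturbation error $O(\varepsilon_0)$ rather than $O(\varepsilon_0/|p_1-p_2|)$.
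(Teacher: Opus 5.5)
Your proposal is correct and is essentially the paper's argument. The paper also gets the first assertion by perturbing the joining direction by $O(\varepsilon_0)$ against the $\varepsilon_0^{1/2}$ margin, and the second from the computation of $v(p_1,p_2)$ in Corollary~\ref{cor 1}; it merely phrases this through non-badness of $p_1,p_2$ and closedness of the bad-line set, which your direct angular version shows is not needed. Your threshold remark is accurate and not cosmetic: with the sharp cutoff $\varepsilon_0^{1/2}$ the literal statement fails for lines barely above it, the paper (here and in Corollary~\ref{cor 1}) silently absorbs the $C\varepsilon_0$ loss, and either of your fixes suffices for the later uses, which only need uniform separation from the bad lines of Proposition~\ref{prop bad2}(1).
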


\begin{proof}
Since $p_2$ is not a bad point associated with $p_1$, Proposition~\ref{prop bad1} implies that $T_{p_1}\mathcal S$ and $T_{p_2}\mathcal S$ are not parallel. Let $v(p_1,p_2)$ denote the projection onto the parameter plane of their intersection line.

{ We first obtain that $p_1$ and $p_2$ are not bad points with respect to each other by Corollary \ref{cor 1}.} 
Next, there is no bad line through $p_2$ parallel to $v(p_1,p_2)$; otherwise, by definition, $p_2$ would be a bad point of the second kind associated with $p_1$, again a contradiction. Additionally, by symmetry of the bad-point relation, there is likewise no bad line through $p_1$ parallel to $v(p_1,p_2)$.

Finally, the set of bad lines is closed in the space of affine lines. Therefore, since every good line makes an angle of at least $\varepsilon_{0}^{\frac{1}{2}}$ with any bad line it intersects, if $\tau_1$ and $\tau_2$ are squares of side length at most $\varepsilon_0$ containing $p_1$ and $p_2$, respectively, and $q_j \in \tau_j$ for $j = 1,2$, then the line joining $q_1$ and $q_2$ is a good line. Similarly, for each $j\in\{1,2\}$, any line through a point of $\tau_j$ parallel to $v(p_1,p_2)$ is a good line.

\end{proof}

\begin{remark}
If $(\tau_1,\tau_2)$ is a good pair and $\iota_i\subset \tau_i$ for $i=1,2$, then $(\iota_1,\iota_2)$ is also a good pair.
\end{remark}

We next introduce the notions of good strips and bad strips. Roughly speaking, a good strip is a rectangle whose long direction is not aligned with any bad line, whereas a bad strip is one whose long direction is aligned with a bad line.

\begin{definition}[Good strip]
Let $L$ be a strip of dimensions $(\Delta, r)$, where $r\gtrsim \Delta$. We say that $L$ is a $\Delta$-good strip if every line $l$ satisfying
\[
|l\cap L|\ge \frac r2
\]
is a good line.
\end{definition}

\begin{definition}[Bad strip]
Let $L$ be a strip of dimensions  $(\Delta, r)$, where $r\gtrsim \Delta$. We say that $L$ is a $\Delta$-bad strip if there exists a bad line $l$ such that
\[
|l\cap L|\ge \frac r2.
\]
Equivalently, the central line of $L$ is a bad line up to an $O(\Delta)$ error.
\end{definition}

Now we revisit Lemma \ref{lem finite} at a thickened version.
{
\begin{lemma}
\label{lem finite2}
Let $\Delta$ be a dyadic number. Take $\Delta$-discretization in the parameter plane and obtain a family of $\Delta$-strips. The subcollection $\mathcal S_\Delta$ that consists of all the $\Delta$-bad strips in the family has the following properties.
\begin{enumerate}
    \item For every bad line $\ell$, the set $N_\Delta(\ell)\cap[-1,1]^2$
    is covered by $O_D(1)$ strips from $\mathcal S_\Delta$.

    \item Every $\Delta$-square in a fixed $\Delta$-grid of $[-1,1]^2$
    intersects at most $O_D(1)$ strips from $\mathcal S_\Delta$.
\end{enumerate}

\end{lemma}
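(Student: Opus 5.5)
The plan is to view the family of $\Delta$-strips as a $\Delta$-discretization of the space of affine lines meeting $[-1,1]^2$, with each strip indexed by a $\Delta$-cell in that space. Concretely, parametrize lines making angle $<\pi/4$ with the $\xi$-axis as $\eta=k\xi+a$ with $(k,a)\in[-1,1]\times[-2,2]$, and the other lines symmetrically. Take the strips to be the $\Delta$-neighbourhoods of the lines with $(k,a)\in\Delta\mathbb Z^2$, so that $N_\Delta(\ell)\cap[-1,1]^2$ for a line $\ell$ with parameters $(k,a)$ is covered by the strips whose parameters lie within $O(\Delta)$ of $(k,a)$. Both parts of the lemma then reduce to counting lattice points near the set of bad lines.

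For part (1), fix a bad line $\ell=\{\eta=k\xi+a\}$. A strip of the family with parameters $(k',a')$ covers part of $N_\Delta(\ell)\cap[-1,1]^2$ only if $|k-k'|+|a-a'|\lesssim\Delta$. The number of such lattice parameters is $O(1)$. I still need to check that each of these strips is itself $\Delta$-bad. A strip whose central line is within $O(\Delta)$ of $\ell$ in the $C^0$ sense on $[-1,1]^2$ satisfies $|\ell\cap L|\ge r/2$, provided the $O(\Delta)$ in the strip width is chosen with a large enough constant. Since the definition of a bad strip explicitly allows an $O(\Delta)$ error, this is the normalization I will fix. Part (1) therefore holds with an absolute constant, which in particular is $O_D(1)$.

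For part (2), fix a $\Delta$-square $Q$ with center $p$. Any strip meeting $Q$ is within $O(\Delta)$ of a line through $p$, and if the strip is bad there is also a bad line $\ell$ lying $O(\Delta)$-close to it. The main step is to show that $\ell$ then passes within $O_D(\Delta)$ of $p$ and that its slope is within $O_D(\Delta)$ of one of at most $C_D$ bad slopes at $p$, the bound $C_D$ coming from Lemma~\ref{lem finite}. Once this is established, the strips through $Q$ that are bad correspond to lattice parameters within $O_D(\Delta)$ of $C_D$ points, and there are $O_D(1)$ of them.

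The obstacle is making Lemma~\ref{lem finite} quantitative: bad lines through points near $p$ must have slopes clustering near the finitely many bad slopes at $p$. I will use the polynomial structure for this. By Proposition~\ref{prop bad2}(1), bad lines have $|k|\lesssim\varepsilon_0$ (or the analogous statement near the $\eta$-axis). The condition that a line $(k,a)$ is bad is the vanishing of the polynomials $c_m(k,a)$, of degree $O_D(1)$, obtained from $\partial_\xi^2 h(\xi,k\xi+a)$. The term $2k$ appears, and its derivative in $k$ is $2+O(\varepsilon_0)$. I will use this to argue that the set of bad parameters is a real algebraic set of dimension $\le1$ whose $k$-fibres over each $a$ are finite. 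I would first try the implicit function theorem on $c_0(k,a)=2k+\partial_\xi^2\rho(0,a)+\dots$, where $\partial_k c_0\neq0$. This shows the bad parameters lie on a single Lipschitz graph $k=\phi(a)$, and then only $O(1)$ lattice cells near each point of that graph meet a given $Q$. If this single-graph picture fails, I would fall back on a Bezout/Milnor–Thom count of the connected components of the bad set ($O_D(1)$ many), applying the same argument on each component.
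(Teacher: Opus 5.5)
Your argument is correct in substance, but it handles part (2) differently from the paper. The paper's proof is a two-line count. It takes as given that bad lines have only $O_D(1)$ slopes, presented as a thickened form of Lemma~\ref{lem finite}. It then notes that for each slope only $O(1)$ intercepts give strips through a fixed $\Delta$-square. (Its phrase ``for a fixed $k$, $O_D(1)$ choices for $b$'' must be read this way, since for $h=\xi\eta$ every horizontal line is bad.) The global slope bound is true: a bad direction $(1,k)$ must be a zero of the top homogeneous part of $h$, so there are at most $D$ such directions. But the paper does not prove it, and deriving a $\Delta$-thickened count from the pointwise count of Lemma~\ref{lem finite} is exactly the non-automatic step you identify. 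Your implicit-function argument supplies that step by a different route. It needs only the $C^3$ bounds $\partial_k c_0=2+O(\varepsilon_0)$ and $\partial_a c_0=O(\varepsilon_0)$, not polynomiality. It gives absolute constants instead of $O_D(1)$, and it shows there is at most one bad line per chart through each point. The algebraic route is shorter but depends essentially on $h$ being a polynomial.

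Two points in your write-up should be corrected.

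First, the ``main step'' as you phrase it is false in general: you claim a bad line passing within $O(\Delta)$ of $p$ has slope near one of the bad slopes \emph{at $p$} from Lemma~\ref{lem finite}. Take $h=\xi\eta+c(\xi^2\eta+\xi\eta^2)$ with small $c\neq0$; its only bad lines are the two coordinate axes. Then no bad line passes through $p=(1/2,\Delta/2)$, even though $\{\eta=0\}$ passes within $\Delta$ of it. The right anchor is the asymptotic slope $k_p$ at $p$, namely the root with $|k|\le1$ of $h_{\xi\xi}(p)+2kh_{\xi\eta}(p)+k^2h_{\eta\eta}(p)=0$. This exists whether or not a bad line passes through $p$, and your implicit-function argument proves exactly this version. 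So drop the appeal to Lemma~\ref{lem finite}.

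Second, ``only $O(1)$ lattice cells near each point of that graph meet $Q$'' is not the statement you need. What you need is that the set of $a$ with $|\eta_p-\phi(a)\xi_p-a|\lesssim\Delta$ is an interval of length $O(\Delta)$. This holds because $\partial_a c_0$ involves only third derivatives of $\rho$, so $|\phi'|\lesssim\varepsilon_0$ and $a\mapsto a+\phi(a)\xi_p$ is bi-Lipschitz. A Lipschitz graph with a large constant could contain a pencil of lines through $Q$. For the same reason, a Milnor--Thom component count alone would not rescue your fallback, though the fallback is never needed here.
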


\begin{proof}
Use the property that there are $O_{D}(1)$ choices for the slope $k$ of all bad lines. Moreover, for a fixed $k$, there are $O_{D}(1)$ choices for $b$. 
\end{proof}

}

Finally, we introduce the notion of transversality. This is similar to the notion of a good pair, except that the separation condition between the two squares is weaker. It will be used in the proof of the broad part.

\begin{definition}\label{def transverse}
{ Let $\tau_1,\tau_2$ be two $\Delta$-squares with distance $\gtrsim \Delta$. We say that $\tau_1$ and $\tau_2$ are \textbf{$\Delta$-transverse} if $\tau_{1}$ is not in a $3\Delta$-bad strip passing through $\tau_{2}$, and $\tau_{2}$ is not in a $3\Delta$-bad strip passing through $\tau_{1}$.}
\end{definition}

{ Note that the transverse condition is not only weaker than the good pair condition in terms of distance, but it also requires a weaker separation condition regarding the angle with respect to bad lines. }

We now recall a useful lemma that will be used repeatedly. Its proof is standard, relying only on the two-dimensional bilinear restriction theorem and Fubini's theorem.

\begin{lemma}[\cite{DW}]\label{lem bilinear restriction}
Let $\pi_1,\pi_2$ be two planes in $\mathbb R^3$ meeting at angle $\sim 1$, and let $\ell=\pi_1\cap\pi_2$. Suppose that $\widehat{F_j}$ is supported in the $\Delta$-neighborhood $N_\Delta(\pi_j)$ of $\pi_j$ for $j=1,2$. Partition each $N_\Delta(\pi_j)$ into rectangular boxes $b_j$ congruent to
\[
[-\Delta,\Delta]\times[-\Delta,\Delta]\times \mathbb R,
\]
with long axis parallel to $\ell$. Then
\[
\int_{\mathbb R^3}|F_1F_2|^2\lesssim \sum_{b_1,b_2}\int_{\mathbb R^3}|P_{b_1}F_1\,P_{b_2}F_2|^2,
\]
where $P_bF$ denotes the Fourier projection of $F$ to $b$.
\end{lemma}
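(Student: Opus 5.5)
The argument is the standard reduction of a three-dimensional bilinear estimate to a family of two-dimensional ones via Fubini, as in \cite{DW}. First normalize the geometry. After a rigid motion of $\mathbb R^3$, which preserves all the hypotheses up to constants, assume $\ell$ is the $x_3$-axis. Since $\pi_1,\pi_2$ meet at angle $\sim 1$, after a further linear change of variables in the first two coordinates, with bounded distortion, we may take
\[
\pi_1=\{\omega_2=0\},\qquad \pi_2=\{\omega_1=0\}
\]
in frequency variables $\omega=(\omega_1,\omega_2,\omega_3)$. Then $\widehat{F_1}$ is supported in $|\omega_2|\lesssim\Delta$ and $\widehat{F_2}$ in $|\omega_1|\lesssim\Delta$. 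The boxes $b_j$ become $\Delta$-squares in $(\omega_1,\omega_2)$ times $\mathbb R$ in $\omega_3$, so $P_{b_j}$ localizes $\omega_1$ (for $j=1$) or $\omega_2$ (for $j=2$) to $\Delta$-intervals. The projections do not touch $\omega_3$.

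Next, apply Fubini in $x_3$ through a partial Fourier transform. Let $G_j(x_1,x_2,\omega_3)$ denote the Fourier transform of $F_j$ in $x_3$ alone. Then $F_1F_2$ in the $x_3$ variable is a convolution in $\omega_3$:
\[
\widehat{F_1F_2}^{(3)}(\cdot,\omega_3)=\int G_1(\cdot,\omega_3-s)\,G_2(\cdot,s)\,ds.
\]
The cleaner route is to first prove the two-dimensional statement and integrate afterwards. The 2D statement is: if $f_1$ has Fourier support in the strip $|\omega_2|\lesssim\Delta$ and $f_2$ in $|\omega_1|\lesssim\Delta$, then
\[
\|f_1f_2\|_{L^2(\mathbb R^2)}^2\lesssim\sum_{I,J}\|P_If_1P_Jf_2\|_2^2,
\]
where $I$ and $J$ range over $\Delta$-intervals in $\omega_1$ and $\omega_2$ respectively. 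This is the two-dimensional bilinear (biorthogonality) estimate. Its proof goes as follows. The Fourier support of $P_If_1P_Jf_2$ lies in the sumset $(I\times[-\Delta,\Delta])+([-\Delta,\Delta]\times J)$, which is contained in a $3\Delta$-square centered at $(c_I,c_J)$. These squares have bounded overlap as $(I,J)$ varies. Plancherel and almost-orthogonality then give the inequality.

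The same argument works directly in three dimensions without slicing. The support of $\widehat{P_{b_1}F_1 P_{b_2}F_2}$ is contained in $(b_1+b_2)$, which is a $3\Delta\times3\Delta\times\mathbb R$ tube around the line $\{(c_{b_1},c_{b_2})\}\times\mathbb R$. Distinct pairs $(b_1,b_2)$ give tubes with $O(1)$ overlap, because the $(\omega_1,\omega_2)$-projection determines the pair up to $O(1)$ choices. Expanding
\[
F_1F_2=\sum_{b_1,b_2}P_{b_1}F_1P_{b_2}F_2,
\]
applying Plancherel, and using Cauchy–Schwarz over the bounded overlap yields the claim. No Fubini in $x_3$ is actually needed.

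The only step needing care is the bounded-overlap claim after undoing the normalization. The partition into boxes with long axis parallel to $\ell$ must map to a $\Delta$-grid in the transverse coordinates. Because the angle is $\sim1$, the linear map involved distorts $\Delta$-squares only by constant factors. This distortion affects only the implicit constant, which is the main thing to check.
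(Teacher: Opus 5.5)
Your proof is correct. The Fourier support of $P_{b_1}F_1P_{b_2}F_2$ lies in $b_1+b_2$, a tube parallel to $\ell$ with cross-section of size $O(\Delta)$, and transversality makes the map $(b_1,b_2)\mapsto c_{b_1}+c_{b_2}$ boundedly many-to-one, so Plancherel and Cauchy--Schwarz close the argument. This is the same orthogonality mechanism as the route the paper indicates: the two-dimensional biorthogonality estimate on each slice $\{x_3=\mathrm{const}\}$, followed by Fubini, which works because $P_{b_j}$ acts only in the variables transverse to $\ell$. You simply carry it out directly in $\mathbb R^3$, and, as you note, the partial-Fourier-transform detour is unnecessary.
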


\begin{remark}
The set $\mathcal B_{\zeta_1}^1$ consists precisely of those points $\zeta_2$ for which the tangent planes at $\zeta_1$ and $\zeta_2$ are parallel. In particular, for such pairs there is no intersection line of tangent planes, so Lemma~\ref{lem bilinear restriction} does not apply.
\end{remark}

\section{Generic bilinear refined decoupling}

In this section, we introduce a generic bilinear refined decoupling inequality for surfaces of the form \eqref{eq reduction}. The genericity condition comes from the assumption that the two separated frequency squares form a good pair.

\begin{definition}\label{def refineddecoupling}
Let $(\tau_1,\tau_2)$ be a good pair of arbitrary scale, and let $X\subset B_R$ be the union of a collection of pairwise disjoint $R^{1/2}$-balls $Q$. For $j=1,2$, let
\[
f_j=\sum_{T\in\mathbb T_j} f_T
\]
be a sum of scale-$R$ wave packets satisfying
\[
\supp(\widehat f_j)\subset \mathcal N_{R^{-1}}(\mathcal S_{\tau_j}),
\]
where $\mathcal S$ is a polynomial surface of the form \eqref{eq reduction}. Suppose that there exists $M_j\ge 1$ such that each $R^{1/2}$-ball $Q\subset X$ intersects at most $M_j$ tubes from $\mathbb T_j$.

We define $C(R)$ to be the smallest constant such that
\begin{equation}
\int_X |f_1f_2|^2 \le C(R)(M_1M_2)^{1/2}\prod_{j=1}^2 \Big(\sum_{T\in\mathbb T_j}\|f_T\|_4^4\Big)^{1/2}
\end{equation}
holds for all such configurations.
\end{definition}

For the elliptic scenario, the refined decoupling inequality was presented in \cite{GIWO} and independently observed by Du-Zhang. For the hyperbolic scenario,
Demeter and Wu first developed the bilinear refined decoupling inequality in the model case $\mathbb{H}$.
We shall prove the following theorem, which is the bilinear refined decoupling inequality for surfaces of the form \eqref{eq reduction}.

\begin{theorem}\label{thm refineddecoupling}
For every $\epsilon>0$, one has
\[
C(R)\lesssim_\epsilon R^\epsilon.
\]
\end{theorem}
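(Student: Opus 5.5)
We argue by induction on scales, in the style of the refined decoupling arguments of \cite{GIWO} and the bilinear refined decoupling of Demeter--Wu \cite{DW}. The aim is a recursion of the form
\[
C(R)\le C_\epsilon\, C(R^{1/2})\cdot R^{O(\epsilon^2)},
\]
or, more precisely, a recursion over a chain of intermediate scales $R^{1/2}=r_0<r_1<\dots<r_N=R$ with $r_{k+1}=r_k^{1+\delta}$, with a loss of only $R^{O(\delta)}$ at each step. Iterating $O(\delta^{-1}\log\log R)$ times then gives $C(R)\lesssim_\epsilon R^\epsilon$. The base case, $R$ bounded, is trivial.

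\textbf{Local step.} Fix an $R^{1/2}$-ball $Q\subset X$. On $Q$, the functions $f_j$ are sums of at most $M_j$ wave packets. After localizing, $\widehat{f_j|_Q}$ lives in an $R^{-1/2}$-neighborhood of $\mathcal S_{\tau_j}$, and each packet is a thin neighborhood of a tangent plane at a point $p_j\in\tau_j$.

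\begin{enumerate}
\item Cut $\tau_j$ into $R^{-1/2}$-caps $\theta_j$. For a pair $(\theta_1,\theta_2)$, the pieces are supported near two tangent planes $T_{p_1}\mathcal S$ and $T_{p_2}\mathcal S$.
\item By Proposition~\ref{prop bad1}, these planes meet at angle $\sim1$.
\item Apply Lemma~\ref{lem bilinear restriction}. This bounds $\int_Q|f_{\theta_1}f_{\theta_2}|^2$ by a sum over boxes elongated along the direction $\ell=T_{p_1}\mathcal S\cap T_{p_2}\mathcal S$.
\end{enumerate}

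The key geometric point is how $\ell$ sits relative to the surface. The projection $v(p_1,p_2)$ of $\ell$ is a good direction. This follows from Corollary~\ref{cor 1} and Lemma~\ref{lem stability}, since $(\tau_1,\tau_2)$ is a good pair. Consequently, the lines through $\theta_j$ parallel to $v$ are good lines. Along a good line the surface curves with curvature $\gtrsim\varepsilon_0$, because along a bad line $h$ is affine and good lines are quantitatively separated from bad ones. Hence the boxes $b_j$ meet $\mathcal S_{\tau_j}$ in strips of width $\sim\Delta$ around good lines. Within each such strip, one-dimensional parabolic ($\ell^2$) decoupling applies to the curve obtained by slicing. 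This reduces the square function on $Q$ to an $\ell^2$ sum over caps, with a loss $\lessapprox 1$.

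\textbf{Multiscale iteration.} At an intermediate scale $r$, group the tubes into $r$-tubes, i.e. tubes at scale $r$, and apply a parabolic-rescaling/Lorentz-type affine map along the good strip. This map should transform the local problem into an instance of Definition~\ref{def refineddecoupling} at a smaller scale, for a new pair $(\tau_1',\tau_2')$.

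Here the propagation property is essential. The rescaled pair must again be a good pair, and the rescaled surface must again be of the form \eqref{eq reduction} with coefficients satisfying \eqref{eq smallcoeff}, perhaps after re-centering and a bounded affine change. Corollary~\ref{cor 1} guarantees that the strips arising at the next scale, namely those parallel to $v(p_1,p_2)$ through $p_j$, are themselves along good lines. Lemma~\ref{lem stability} guarantees that this stays true under $\varepsilon_0$-perturbations of $p_1,p_2$. So the good-pair hypothesis propagates from scale to scale.

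\textbf{Pigeonholing and closing the recursion.} In the usual way, pigeonhole dyadic values of the number of $r$-tubes through each $r$-ball, and of the number of $R^{1/2}$-tubes per $r$-tube. This converts the multiplicities $M_j$ into the corresponding quantities at the smaller scale. A final application of Hölder and $L^4$-orthogonality for the wave packets then recovers $\sum_T\|f_T\|_4^4$. Chaining the inequalities gives
\[
C(R)\le C_\delta\, C(R^{1-\delta}),
\]
which closes the induction.

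\textbf{Main obstacle.} I expect the hardest step to be verifying that rescaling preserves genericity uniformly. The rescaled surface must remain a small perturbation of $\xi\eta$, the good pair must remain good, and there must be no accumulation of loss in the angle bound $\varepsilon_0^{1/2}$ across $O(\log\log R)$ steps. My first attempt at this is to recentre at $p_1$ at every scale. I would use Lemma~\ref{lem finite2} to keep away from the $O_D(1)$ bad strips, and I would absorb the $O(\varepsilon_0)$ distortions by shrinking $\varepsilon_0$ once at the start. This works because the number of iterations needed to reach an $R^\epsilon$ loss depends only on $\epsilon$.
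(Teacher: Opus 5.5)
There is a genuine gap, and it begins in your Local step. You correctly extract from Corollary~\ref{cor 1} and Lemma~\ref{lem stability} that $v(p_1,p_2)$ is a good direction. You then use this to claim that on a single $R^{1/2}$-ball $Q$ one can pass from $\int_Q|f_1f_2|^2$ to $\int_Q\sum_{\theta_1}|f_{\theta_1}|^2\sum_{\theta_2}|f_{\theta_2}|^2$ with a $\lessapprox 1$ loss. That is false. Suppose each of the $\sim R$ caps $\theta\subset\tau_j$ (for $\tau_j$ of size comparable to $1$) carries one packet through $Q$ with aligned phases. Then the $R^2$ pairs $(\theta_1,\theta_2)$ fall into only $\sim R^{3/2}$ frequency cells of size $R^{-1/2}$, and the left side beats the right side by roughly $R^{1/2}$. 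Indeed, if your local step held, the theorem would follow at once from Cauchy--Schwarz on each $Q$, even with $\|f_T\|_{L^4(Q)}$ in place of $\|f_T\|_4$. The same example refutes that stronger statement: refined decoupling genuinely uses the $L^4$ mass of each packet along its whole tube.

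The mechanism you propose also has nothing to act on. Lemma~\ref{lem bilinear restriction} applies only once $\mathcal S_{\tau_j}$ lies in the $\Delta$-neighbourhood of a plane, i.e.\ (separation normalized to $1$) for squares of side $r$ with $r^2\lesssim\Delta$. In that regime each box $b_j$ meets the surface in a piece lying within $\Delta$ of a line segment. So the curvature along $v$ is invisible at that scale, and one-dimensional parabolic decoupling inside a box gives nothing. This is why Lemma~\ref{lem equal} only upgrades $r$-squares to $(r/K_0,r)$-rectangles along $v$, a $K_0$-fold gain per step.

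The missing idea is what the induction is applied to. In the paper one expands $|f_\omega|^2$:
\begin{itemize}
\item adjacent pieces reassemble, via the reverse inequality of Lemma~\ref{lem equal}, into a pair of smaller squares, and the step is repeated (the narrow alternative in Lemma~\ref{lem dichotomy});
\item non-adjacent pieces form $g_\omega$.
\end{itemize}
In the broad alternative, H\"older separates $\omega_1$ from $\omega_2$. Then $\|g_\omega\|_{L^4(Q)}$ is controlled by a new bilinear quantity $\int|f_sf_{s'}|^2$ for two separated sub-rectangles $s,s'$ of the \emph{same} strip $\omega\subset\tau_j$. That is the real role of the propagation property: $\omega$ is a good strip, so every line joining $s$ and $s'$ is good. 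After isotropic rescaling, $(s,s')$ is a fresh good pair, bounded via Proposition~\ref{prop decouplingconstant} by $C(R(r')^2/K_1^2)$ with $r'\lesssim K_3^{-1}$. Your ``new pair $(\tau_1',\tau_2')$'' produced by a Lorentz-type map is never identified, and without this self-bilinearization inside a strip the recursion does not exist.

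The closing is also not of the form $C(R)\le C_\delta C(R^{1-\delta})$. Each recursion costs $(K_1K_2)^{O(1)}(1/r')^{O(\epsilon^2)}$, a power of $R$. This is absorbed only because the hypothesis $C(R')\lesssim (R')^{\nu}$ at $R'\le RK_3^{-2}$ gains a factor $K_3^{-(2\nu-O(\epsilon^2))}$. Your own count, a loss $R^{O(\delta)}$ per step over $O(\delta^{-1}\log\log R)$ steps, gives $R^{O(\log\log R)}$, not $R^\epsilon$.

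Finally, the accumulation of angle losses over $\log\log R$ levels that you worry about does not arise. Each use of the hypothesis is on a fresh good pair, and goodness is defined by absolute angles, which are invariant under translation and isotropic dilation. Only the $O_\epsilon(1)$ intermediate steps within a single recursion need Lemma~\ref{lem stability}.
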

Fix $\epsilon>0$, and define
\[
K_1=R^{\epsilon^6},\qquad K_2=R^{\epsilon^4},\qquad K_3=R^{\epsilon^2},\qquad K_0=\frac{K_2}{K_1}.
\]
Then
\[
1\ll K_1\ll K_2\ll K_3\ll R^\epsilon.
\]

We next introduce the notion of generic position for pairs of frequency squares. Unlike the notions of good pair and transversality, it imposes a quantitative relation between the distance of two frequency squares and their scale.

\begin{definition}[Generic position]
We say that a pair of $r$-squares is in \emph{generic position} if the following two conditions hold:
\begin{itemize}
\item the distance between their centers is at least $K_0r$;
\item every line joining the two squares is a good line.
\end{itemize}
\end{definition}

The lower bound $d\ge K_0r$ ensures that after rescaling the distance between the two squares to $1$, the sidelength of each square becomes $O(K_0^{-1})$. In particular, the rescaled squares are small enough for the stability lemma to produce a good pair at unit scale.

\begin{definition}
We say that an $r$-strip is in generic position if it is a good strip.
\end{definition}

\begin{remark}
If $(\tau_1,\tau_2)$ is in generic position and $\iota_i\subset \tau_i$ for $i=1,2$, then $(\iota_1,\iota_2)$ is also in generic position.
\end{remark}

Let $Q$ be an $R^{1/2}$-ball in $\mathbb R^3$. By rescaling and repeated applications of Lemma~\ref{lem bilinear restriction}, we obtain the following lemma.

\begin{lemma}\label{lem equal}
Let $\tau_1,\tau_2$ be a pair of $r$-squares in generic position, and let
\[
d:=\dist(c(\tau_1),c(\tau_2))
\]
denote the distance between their centers. Assume that
\begin{equation}\label{eq c}
dR^{1/2}r\gtrsim K_0.
\end{equation}
Then for every $R^{1/2}$-ball $Q\subset \mathbb R^3$,
\[
\int_Q |f_{\tau_1}f_{\tau_2}|^2
\sim
\int_Q
\sum_{\omega_1\subset \tau_1}|f_{\omega_1}|^2
\sum_{\omega_2\subset \tau_2}|f_{\omega_2}|^2,
\]
{ where $\omega_j$ are $(r/K_0,r)$-rectangles partitioning $\tau_j$ and in generic position.}
\end{lemma}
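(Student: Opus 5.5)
The plan is to apply Lemma~\ref{lem bilinear restriction} once, after a rescaling that makes the separation of $\tau_1,\tau_2$ comparable to $1$. First fix $\zeta_j=c(\tau_j)$. Let $\pi_j$ be the tangent plane $T_{\zeta_j}\mathcal S$, translated to pass through the origin, and let $\ell=\pi_1\cap\pi_2$. Its projection $v(\zeta_1,\zeta_2)$ onto the parameter plane is well defined by Proposition~\ref{prop bad1}, since $\zeta_1\neq\zeta_2$.

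Next, rescale the parameter plane by the factor $d$: set $\zeta=\zeta_1+d\zeta'$, and subtract the affine part of $h$ at $\zeta_1$. After dividing by $d^2$, the phase $d^{-2}\bigl(h(\zeta_1+d\zeta')-h(\zeta_1)-d\nabla h(\zeta_1)\cdot\zeta'\bigr)$ is again of the form \eqref{eq reduction}, up to a linear change of variables that is $O(\varepsilon_0)$-close to the identity, with coefficients still obeying \eqref{eq smallcoeff}. The squares $\tau_j$ become squares $\tau_j'$ of side $r/d\le K_0^{-1}\le\varepsilon_0$ at distance $\sim1$. By the generic position hypothesis, their centers lie on a good line, and goodness is preserved by rescaling up to the $O(\varepsilon_0)$ distortion. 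Lemma~\ref{lem stability} then shows that $(\tau_1',\tau_2')$ is a good pair, and that every line through $\tau_j'$ parallel to $v$ is good. On the spatial side, the ball $Q$ of radius $R^{1/2}$ becomes a ball of radius $\rho:=dR^{1/2}\cdot(\text{scaling})$. The frequency supports $\mathcal N_{R^{-1}}(\mathcal S_{\tau_j})$ become $O((d^2R)^{-1})$-neighborhoods of the rescaled caps. Each rescaled cap has diameter $r/d$, so it lies in the $\Delta$-neighborhood of its tangent plane $\pi_j'$ with $\Delta\sim (r/d)^2$. Moreover, the angle between $\pi_1'$ and $\pi_2'$ is $\sim1$, because the normals differ by $\nabla h(\zeta_2)-\nabla h(\zeta_1)$, which has size $\sim d$ (by the invertibility of the averaged Hessian from the proof of Proposition~\ref{prop bad1}), and this becomes $\sim1$ after rescaling.

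With this in hand, apply Lemma~\ref{lem bilinear restriction}, localized to $Q$ by a standard weight $w_Q$ whose Fourier transform lives at scale $R^{-1/2}$ (in rescaled coordinates, $(dR^{1/2})^{-1}$). This step needs $\Delta\gtrsim(dR^{1/2})^{-1}$, so that the uncertainty from localizing to $Q$ does not exceed the slab thickness. We therefore choose the boxes $b_j$ to have cross-section $\max(\Delta,(dR^{1/2})^{-1})$, and hypothesis~\eqref{eq c} is exactly what guarantees that the boxes are thin enough relative to the cap. Each box $b_j$ is a tube of cross-section $\Delta'$ with long axis parallel to $\ell$. Its intersection with the rescaled cap projects, in the parameter plane, onto a strip parallel to $v$. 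Undoing the rescaling, this strip is an $(r/K_0,r)$-rectangle $\omega_j\subset\tau_j$ parallel to $v(\zeta_1,\zeta_2)$ (after possibly refining to width $r/K_0$). By the second part of Lemma~\ref{lem stability}, these rectangles are good strips, that is, in generic position. Lemma~\ref{lem bilinear restriction} then gives
\[
\int_Q|f_{\tau_1}f_{\tau_2}|^2\lesssim\sum_{\omega_1,\omega_2}\int|f_{\omega_1}f_{\omega_2}|^2 w_Q=\int\Bigl(\sum_{\omega_1}|f_{\omega_1}|^2\Bigr)\Bigl(\sum_{\omega_2}|f_{\omega_2}|^2\Bigr)w_Q .
\]
The reverse inequality follows from local orthogonality. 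On $Q$, the products $f_{\omega_1}f_{\omega_2}$ have Fourier supports in the sets $\omega_1+\omega_2$ (thickened by $R^{-1/2}$), and these have bounded overlap because of the transversality of the two strip families. Replacing $w_Q$ by $1_Q$ in both directions is the usual loss-free approximation, which is why the statement is phrased with $\sim$.

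The main obstacle I expect is making the box-to-rectangle correspondence precise. The box boundaries are parallel to $\ell$, not to a bad direction, and I must check that the rectangles $\omega_j$ obtained from slicing the curved caps really have width $\sim r/K_0$ and direction $v(\zeta_1,\zeta_2)$ uniformly over the whole cap. This uses the fact that the cap deviates from $\pi_j$ by $(r/d)^2$. I would try to handle it by noting that the curvature of the level curves of the cap's height over $\pi_j$ is $O(1)$, so that over length $r$ a slice of width $r/K_0$ stays within an $O(r/K_0)$-strip, given $K_0$ large.
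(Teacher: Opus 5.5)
\textbf{Forward direction: same route as the paper, with a scale slip.} Your forward direction is the paper's argument. You rescale about the pair by $d$, apply Lemma~\ref{lem bilinear restriction} once with boxes along $\ell=\pi_1\cap\pi_2$, and use Lemma~\ref{lem stability} to see that the strips parallel to $v(\zeta_1,\zeta_2)$ are good. Your averaged-Hessian argument that $|\nabla h(\zeta_2)-\nabla h(\zeta_1)|\sim d$ is a cleaner justification of the $\sim 1$ angle than the paper gives.

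The scale bookkeeping is off, however. Under $\zeta=\zeta_1+d\zeta'$, $h\mapsto d^{-2}(\cdots)$, the ball $Q$ becomes a $dR^{1/2}\times dR^{1/2}\times d^2R^{1/2}$ slab, not a ball. Localization therefore thickens the rescaled caps by $(d^2R^{1/2})^{-1}$ in the normal direction, not by $(dR^{1/2})^{-1}$. With your value, the needed condition would be $rR^{1/2}\gtrsim K_0$; the extra factor $d$ in \eqref{eq c} comes exactly from this $d^2$.

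As a result, your box width $\max\{(r/d)^2,(dR^{1/2})^{-1}\}$ can be thinner than the actual Fourier support. The later ``refining'' to width $r/K_0$ would not be free either, since $\sum_{\nu\subset\omega}|f_\nu|^2\lesssim|f_\omega|^2$ fails pointwise. Take $\Delta\sim r/(dK_0)$ from the outset, as the paper does. Then $\Delta\ge (r/d)^2$ is the generic-position condition $d\ge K_0 r$, and $\Delta\ge (d^2R^{1/2})^{-1}$ is \eqref{eq c}. This also removes the obstacle you anticipate. The rescaled cap lies in $N_\Delta(\pi_j)$, so its intersection with a $\Delta$-tube along $\ell$ projects into an $O(\Delta)$-strip parallel to $v$, and no analysis of level curves is needed.

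\textbf{Reverse direction: the genuine gap.} Bounded overlap of the Fourier supports of the products $f_{\omega_1}f_{\omega_2}$ gives only $\int|\sum_{\omega_1,\omega_2}f_{\omega_1}f_{\omega_2}|^2\phi_Q^2\lesssim\sum_{\omega_1,\omega_2}\int|f_{\omega_1}f_{\omega_2}|^2\phi_Q^2$. That is the forward direction again. Bounded overlap permits cancellation and yields no lower bound.

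Moreover, the sets $\omega_1+\omega_2$ in the parameter plane are not boundedly overlapping at all. Both strip families are parallel to $v$, so $\omega_1+\omega_2$ depends essentially only on the sum of the two transverse offsets. Separation comes only from the vertical components of the lifted sets, that is, from the transversality of $\pi_1,\pi_2$ in $\mathbb R^3$.

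What you need is the paper's near-orthogonality argument:
\begin{enumerate}
\item Expand $|f_{\tau_1}|^2|f_{\tau_2}|^2$ into terms $f_{\omega_1}\overline{f_{\omega_1'}}f_{\omega_2}\overline{f_{\omega_2'}}$.
\item In rescaled coordinates, $(\mathcal S_{\omega_1}-\mathcal S_{\omega_1'})+(\mathcal S_{\omega_2}-\mathcal S_{\omega_2'})$ stays at distance $\gtrsim r/(dK_0)$ from the origin unless both pairs are adjacent.
\item By \eqref{eq c}, this distance dominates the localization uncertainty $(d^2R^{1/2})^{-1}$, so those terms are $O(R^{-100})$ against $\chi_Q$.
\end{enumerate}

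Be aware that this still leaves the adjacent cross terms, which are not sign-definite. With sharp cutoffs $f_\omega=f\mathbf{1}_\omega$, take $f$ concentrated in an $R^{-2}$-neighborhood of the common edge of two adjacent $\omega_1$'s and odd across it. Then $f_{\tau_1}$ is negligible on a suitable $Q\subset B_R$ while $f_{\omega_1}$ is not. So an honest two-sided statement needs smooth partitions or weights. Likewise, the replacement of $w_Q$ by $\mathbf{1}_Q$ that you call loss-free is not available for lower bounds.
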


\begin{proof}
Write $c(\tau_j)=\zeta_j$ for the center of $\tau_j$. Since $\zeta_1\neq \zeta_2$, Proposition~\ref{prop bad1} implies that the tangent planes $T_{\zeta_1}\mathcal S$ and $T_{\zeta_2}\mathcal S$ are not parallel. We therefore let
\[
v:=v(\zeta_1,\zeta_2)
\]
denote the projection onto the parameter plane of their intersection line.

Since $(\tau_1,\tau_2)$ is in generic position, every line joining a point in $\tau_1$ to a point in $\tau_2$ is a good line. This property is preserved under affine changes of variables in the parameter plane. Hence, after translating the midpoint of $\zeta_1$ and $\zeta_2$ to the origin and rescaling by the factor $\frac{1}{d}$ in the parameter plane, the images $\widetilde\tau_1$ and $\widetilde\tau_2$ are at distance $\sim 1$ and form an $O\!\left(\frac{r}{d}\right)$-good pair for the rescaled surface $\widetilde{\mathcal S}$. Since the centers $\zeta_1,\zeta_2$ are not bad points of each other, the same argument as in the proof of Lemma~\ref{lem stability} shows that, after rescaling, every line through $\widetilde\tau_j$ parallel to the image of $v$ is still a good line.

Moreover, after this rescaling, the tangent planes at the centers of $\widetilde\tau_1$ and $\widetilde\tau_2$ meet at angle $\sim 1$. Indeed, for the model surface $h(\xi,\eta)=\xi\eta$, the difference of the corresponding gradients is of size $\sim 1$ after normalization by $d$, and the perturbative assumption \eqref{eq smallcoeff} preserves this transversality when $\varepsilon_0$ is sufficiently small.

We now localize to the ball $Q$. Let $\phi_Q$ be a smooth bump function adapted to $Q$. Then
\[
\int_Q |f_{\tau_1}f_{\tau_2}|^2
\lesssim
\int_{\mathbb R^3} |\phi_Q f_{\tau_1}\,\phi_Q f_{\tau_2}|^2,
\]
and similarly for the reverse inequality. Since $Q$ has sidelength $R^{1/2}$, the Fourier transform $\widehat{\phi_Q}$ is supported at scale $R^{-1/2}$. Therefore
\[
\supp\bigl(\widehat{\phi_Q f_{\tau_j}}\bigr)
\subset N_{C R^{-1/2}}(\mathcal S_{\tau_j}).
\]
After the above rescaling by $\frac{1}{d}$ in the parameter plane, together with the corresponding parabolic rescaling in $\mathbb R^3$, the Fourier support of the rescaled function lies inside the $\frac{1}{R^{1/2}d^2}$-neighborhood of the rescaled surface $\widetilde{\mathcal S}_{\widetilde\tau_j}$. Moreover, $\widetilde{\mathcal S}$ is still given by a polynomial of the form \eqref{eq reduction}.

Since the rescaled squares $\widetilde{\tau}_j$ have sidelength $\frac{r}{d}$, the rescaled surface $\widetilde{\mathcal S}$ over each $\widetilde{\tau}_j$ differs from its tangent plane by $O\bigl((\frac{r}{d})^2\bigr)$. On the other hand, the Fourier support of $\widetilde f_{\widetilde\tau_j}$ lies in a $\frac{1}{R^{1/2}d^2}$-neighborhood of $\widetilde{\mathcal S}_{\widetilde\tau_j}$. Since
\[
\frac{1}{d^{2}R^{1/2}} \lesssim \frac{r}{dK_{0}}
\quad\text{and}\quad
\frac{r^{2}}{d^{2}}\lesssim \frac{r}{dK_{0}},
\]
by the assumptions $dR^{1/2}r\gtrsim K_{0}$ and $d\gtrsim rK_{0}$, it follows that the Fourier support of $\widetilde f_{\widetilde\tau_j}$ is contained in an $O\bigl(\frac{r}{dK_0}\bigr)$-neighborhood of the tangent plane to $\widetilde{\mathcal S}$ over $\widetilde\tau_j$.

Therefore we may apply Lemma~\ref{lem bilinear restriction} in the rescaled coordinates with
\[
\Delta \sim \frac{r}{dK_{0}},
\]
using rectangular boxes whose long axis is parallel to the intersection line of the two tangent planes. We obtain
\[
\int_{\widetilde Q}
|\widetilde f_{\widetilde\tau_1}\widetilde f_{\widetilde\tau_2}|^2
\lesssim
\int_{\widetilde Q}
\sum_{\widetilde\omega_1\subset \widetilde\tau_1}
|\widetilde f_{\widetilde\omega_1}|^2
\sum_{\widetilde\omega_2\subset \widetilde\tau_2}
|\widetilde f_{\widetilde\omega_2}|^2.
\]
{ Here $\{\tilde \omega_i\}$ denotes the family of $\big(\frac{r}{dK_0},\frac{r}{d}\big)$-rectangles partitioning $\tilde \tau_i$, with long side parallel to the projection onto the parameter plane of the intersection line of the tangent planes at the centers $c(\tilde \tau_1)$ and $c(\tilde \tau_2)$. By Lemma \ref{lem stability}, these $\tilde \omega_{j}$ are in generic position in rescaled parameter plane.}
Rescaling back, these correspond to rectangles $\omega_j\subset \tau_j$ of dimensions $\big(\frac{r}{K_0},r\big)$, and hence
\[
\int_Q |f_{\tau_1}f_{\tau_2}|^2
\lesssim
\int_Q
\sum_{\omega_1\subset \tau_1}|f_{\omega_1}|^2
\sum_{\omega_2\subset \tau_2}|f_{\omega_2}|^2.
\]
{ Since the property for generic position is preserved under affine changes of variables in the parameter plane, these $\omega_{j}$ are in generic position.}

It remains to prove the reverse inequality. Expanding the right-hand side, we obtain
\[
\int_Q
\sum_{\omega_1,\omega_1'\subset \tau_1}
f_{\omega_1}\overline{f_{\omega_1'}}
\sum_{\omega_2,\omega_2'\subset \tau_2}
f_{\omega_2}\overline{f_{\omega_2'}}.
\]
Thus it suffices to show that the off-diagonal terms are negligible unless
$\omega_1=\omega_1'$ and $\omega_2=\omega_2'$ up to bounded overlap.

Let $\chi_Q$ be a Schwartz function adapted to $Q$ such that $\chi_Q\equiv 1$ on $Q$. Then
\[
\int_Q
f_{\omega_1}\overline{f_{\omega_1'}}
f_{\omega_2}\overline{f_{\omega_2'}}
\le
\int_{\mathbb R^3}
\chi_Q(x)\,
f_{\omega_1}(x)\overline{f_{\omega_1'}(x)}
f_{\omega_2}(x)\overline{f_{\omega_2'}(x)}\,dx.
\]
The Fourier transform of the integrand is supported in
\[
(\supp \widehat{f_{\omega_1}}-\supp \widehat{f_{\omega_1'}})
+
(\supp \widehat{f_{\omega_2}}-\supp \widehat{f_{\omega_2'}}),
\]
up to an $O(R^{-1/2})$ error coming from $\widehat{\chi_Q}$.

{ Now each $\omega_j$ is the rectangle of dimensions $\big(\frac{r}{K_0},r\big)$ as above.
If $\omega_1\neq \omega_1'$ and these two rectangles are not overlapping up to a bounded factor, then the supports of $\widehat{f_{\omega_1}}$ and $\widehat{f_{\omega_1'}}$ are separated by $\gtrsim \frac{r}{K_0}$ in that short direction.}
The same holds for $\omega_2$ and $\omega_2'$.
Since
\[
R^{-1/2}\lesssim \frac{r}{K_0},
\]
by the assumption $dR^{1/2}r\gtrsim K_0$ and the fact that $d\lesssim 1$, we may invoke the standard Córdoba--Fefferman almost orthogonality argument. Therefore the above Minkowski sum stays away from the origin unless $\omega_1=\omega_1'$ and $\omega_2=\omega_2'$ up to bounded overlap.

It follows that every genuinely off-diagonal term is rapidly decaying. More precisely, by repeated integration by parts,
\[
\int_{\mathbb R^3}
\chi_Q(x)\,
f_{\omega_1}(x)\overline{f_{\omega_1'}(x)}
f_{\omega_2}(x)\overline{f_{\omega_2'}(x)}\,dx
=O(R^{-100})
\]
unless $\omega_1=\omega_1'$ and $\omega_2=\omega_2'$ up to bounded overlap.

Summing over all $\omega_1,\omega_1',\omega_2,\omega_2'$, we conclude that
\[
\int_Q
\sum_{\omega_1\subset \tau_1}|f_{\omega_1}|^2
\sum_{\omega_2\subset \tau_2}|f_{\omega_2}|^2
\lesssim
\int_Q |f_{\tau_1}f_{\tau_2}|^2,
\]
which gives the desired reverse inequality. This completes the proof.
\end{proof}
Note that the reverse inequality in Lemma~\ref{lem equal}, namely that the right-hand side is bounded by the left-hand side, only uses the condition
$
drR^{1/2}\gtrsim K_0.
$

For a rectangle $\omega_i$, define
\[
g_{\omega_i}:=\Biggl(\sum_{s_i\not\sim s_i'\subset \omega_i}|f_{s_i}f_{s_i'}|\Biggr)^{1/2}.
\]
Here we write $s_i\not\sim s_i'$ to mean that $s_i$ and $s_i'$ are \emph{ not adjacent}, in the sense that the distance between their centers is at least twice the length of their long side.

We now introduce a key lemma. It shows that estimating $\int_Q |f_{\tau_1}f_{\tau_2}|^2$ can be reduced either to estimating an expression of the same form at a smaller frequency scale, or to estimating a term involving $g_{\omega_1}$ and $g_{\omega_2}$.

\begin{lemma}\label{lem dichotomy}
Let $(\tau_1,\tau_2)$ be a pair of $r$-squares in generic position, and let $d$ denote the distance between their centers.
Assume that
\[
drR^{1/2}\ge K_0.
\]
Then
\begin{equation}\label{eq dich}
\int_Q |f_{\tau_1}f_{\tau_2}|^2
\lesssim
\int_Q \sum_{\beta_1\subset \tau_1}|f_{\beta_1}|^2 \sum_{\beta_2\subset \tau_2}|f_{\beta_2}|^2
+K_1^{O(1)}\sum_{\omega_1\subset \tau_1}\sum_{\omega_2\subset \tau_2}\int_Q |g_{\omega_1}g_{\omega_2}|^2.
\end{equation}

{ Here $\{\beta_i\}$ denotes the family of $\frac{r}{K_1}$-squares partitioning $\tau_i$, and $\{\omega_i\}$ denotes the family of $\big(\frac{r}{K_0},r\big)$-rectangles partitioning $\tau_i$ and each rectangle from $\{\omega_i\}$ is in generic position.}
\end{lemma}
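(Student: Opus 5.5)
We begin with Lemma~\ref{lem equal}, which applies because $drR^{1/2}\ge K_0$. It gives
\[
\int_Q |f_{\tau_1}f_{\tau_2}|^2\lesssim \int_Q \sum_{\omega_1\subset\tau_1}|f_{\omega_1}|^2\sum_{\omega_2\subset\tau_2}|f_{\omega_2}|^2 ,
\]
with $\omega_j$ the $(r/K_0,r)$-rectangles in generic position. It therefore suffices to bound each product $|f_{\omega_1}|^2|f_{\omega_2}|^2$ pointwise, or on $Q$. To do this we subdivide each $\omega_j$ along its long direction into pieces $s_j$ of dimensions $(r/K_0, r/K_1)$; there are $\sim K_1$ such pieces in each $\omega_j$. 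We write $f_{\omega_j}=\sum_{s_j\subset\omega_j}f_{s_j}$ and expand
\[
|f_{\omega_j}|^2\le \sum_{s_j\sim s_j'}|f_{s_j}f_{s_j'}|+\sum_{s_j\not\sim s_j'}|f_{s_j}f_{s_j'}|\lesssim \sum_{s_j}|f_{s_j}|^2+g_{\omega_j}^2 .
\]
The first bound uses Cauchy--Schwarz on the adjacent terms. Since each $s_j$ has only $O(1)$ neighbours, the adjacent part is $\lesssim\sum_{s_j}|f_{s_j}|^2$.

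Next we multiply the two bounds and sort the resulting four terms. The term $g_{\omega_1}^2g_{\omega_2}^2$ appears directly; summed over $\omega_1,\omega_2$, it is the second term of \eqref{eq dich}. A mixed term such as $\sum_{s_1}|f_{s_1}|^2\,g_{\omega_2}^2$ is handled as follows. We have $g_{\omega_2}^2\le \sum_{s_2\ne s_2'}|f_{s_2}f_{s_2'}|\le K_1\sum_{s_2}|f_{s_2}|^2$ by Cauchy--Schwarz over the $K_1$ pieces. Conversely, $\sum_{s_1}|f_{s_1}|^2$ is trivially at most $|f_{\omega_1}|^2$ plus interaction terms, which gives only the wrong direction. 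So we split instead according to which term dominates pointwise. At each point, either $\sum_{s_j}|f_{s_j}|^2\gtrsim K_1^{-C}|f_{\omega_j}|^2$ for both $j$ (the ``diagonal'' case), or for some $j$ the quantity $|f_{\omega_j}|^2$ is dominated by $g_{\omega_j}^2$. In the first case we would have to bound the contribution by the $\beta$-terms. In the second case we would use $|f_{\omega_{j'}}|^2\lesssim K_1\sum_{s_{j'}}|f_{s_{j'}}|^2$ together with the trivial estimate. The cleaner route is to run a broad/narrow style case split on the product so that every term carries either a factor $g_{\omega_1}^2g_{\omega_2}^2$ (costing at most $K_1^{O(1)}$) or a factor $\sum_{s_1}|f_{s_1}|^2\sum_{s_2}|f_{s_2}|^2$.

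The last step is to pass from the pieces $s_j$ to the squares $\beta_j$ of side $r/K_1$. Each $\beta_j$ is a union of $K_0/K_1\cdot$ (bounded) many $s_j$; more precisely, it is the $r/K_1$-square containing $s_j$, whose width $r/K_0\le r/K_1$. We use $|f_{s_j}|^2$ against $|f_{\beta_j}|^2$ through the reverse inequality in Lemma~\ref{lem equal}, which needs only $drR^{1/2}\gtrsim K_0$. Applied to the pair $(\beta_1,\beta_2)$, which is in generic position by the remark on subsets, it gives
\[
\int_Q\sum_{s_1\subset\beta_1}|f_{s_1}|^2\sum_{s_2\subset\beta_2}|f_{s_2}|^2\lesssim\int_Q|f_{\beta_1}f_{\beta_2}|^2 .
\]
Summing over $\beta_1,\beta_2$ then yields the first term of \eqref{eq dich}.

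We expect the main obstacle to be this step. It requires checking that $(\beta_1,\beta_2)$, at scale $r/K_1$ and distance $d$, still satisfies the scale hypothesis $d(r/K_1)R^{1/2}\gtrsim K_0$ needed for Córdoba--Fefferman orthogonality in the $s$-direction. If it does not, we would instead argue orthogonality directly: distinct non-adjacent $s_j$ inside a common $\beta_j$ are separated by $\gtrsim r/K_0\ge R^{-1/2}$ in the short direction. We also need to ensure that the losses from the pointwise splitting are only $K_1^{O(1)}$ and that they land only on the $g$-term.
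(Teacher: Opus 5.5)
Your outline follows the paper's proof: Lemma~\ref{lem equal} at the $(r/K_0,r)$ scale, cutting each $\omega_j$ into $(r/K_0,r/K_1)$ pieces $s_j$, then regrouping into the $\beta_j$ by the reverse inequality. However, the middle step is left open, and it is the real content of the lemma. Write $A_j=\sum_{s_j\subset\omega_j}|f_{s_j}|^2$. Multiplying your two pointwise bounds leaves the cross terms $A_1g_{\omega_2}^2$ and $g_{\omega_1}^2A_2$.

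Using $g_{\omega_2}^2\le K_1A_2$ puts a factor $K_1$ on the narrow term, which is not affordable. The first term of \eqref{eq dich} must carry an absolute constant, because Lemma~\ref{lemma NB} iterates the narrow alternative up to $\sim\log(rR^{1/2})/\log K_1$ times, and a loss of $K_1$ per step would accumulate to a power of $rR^{1/2}$. Nor can any pointwise case split do better. If $f_{\omega_1}=f_{s_1}$ is a single piece (so $g_{\omega_1}\equiv 0$) and all $K_1$ pieces of $f_{\omega_2}$ equal $1$ at a point $x$, then $|f_{\omega_1}f_{\omega_2}|^2(x)=K_1A_1(x)A_2(x)$. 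So the estimate can only hold after integrating over $Q$. The missing idea is almost orthogonality along the common long direction $v$ of the $\omega_j$.

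Concretely, pick $\chi_Q\ge 0$ with $\chi_Q\ge\mathbf 1_Q$ and $\widehat{\chi_Q}$ supported in $B(0,CR^{-1/2})$, and expand
\[
\int\chi_Q|f_{\omega_1}|^2|f_{\omega_2}|^2=\sum_{s_1,s_1',s_2,s_2'}\int\chi_Q\,f_{s_1}\overline{f_{s_1'}}\,f_{s_2}\overline{f_{s_2'}} .
\]
Projecting Fourier supports to the parameter plane, a term vanishes unless $0\in(s_1-s_1')+(s_2-s_2')+B(0,CR^{-1/2})$. The pieces in both $\omega_1$ and $\omega_2$ are stacked along $v$ with step $r/K_1$. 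Also $R^{-1/2}\ll r/K_1$, since $drR^{1/2}\ge K_0\gg K_1$ and $d\lesssim 1$. So if $s_j$ is the $k_j$-th piece, the term vanishes unless $|k_1-k_1'+k_2-k_2'|\le 2$.

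Hence in a mixed term with $s_1\sim s_1'$, only $O(1)$ pieces $s_2'$ survive for each $s_2$, and these terms are $\lesssim\int\chi_Q A_1A_2$ with an absolute constant. The doubly non-adjacent terms are bounded by $\int\chi_Q g_{\omega_1}^2g_{\omega_2}^2$; this is the paper's $K_1^{O(1)}\max$ term.

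Separately, the step you flag as the main obstacle is not one. Apply the reverse inequality of Lemma~\ref{lem equal} to $(\beta_1,\beta_2)$ with $(r,K_0)$ replaced by $(r/K_1,K_0/K_1)$, as the paper does. Then the rectangles it produces are exactly your pieces $s_j$. The hypothesis becomes $d\,(r/K_1)R^{1/2}\gtrsim K_0/K_1$, which is immediate from $drR^{1/2}\ge K_0$, and $d\ge K_0r$ puts $(\beta_1,\beta_2)$ in generic position for that parameter. Keeping $K_0$ unchanged, as you did, would instead produce $(r/(K_0K_1),r/K_1)$-rectangles.
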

\begin{proof}

We apply Lemma~\ref{lem equal} at the $\big(\frac{r}{K_{0}},r\big)$-scale:
\begin{equation}
\begin{aligned}
    &\int_{Q}|f_{\tau_{1}}f_{\tau_{2}}|^{2}\lesssim \sum_{\omega_{1}\subset \tau_{1},\,\omega_{2}\subset \tau_{2}} \int_{Q} |f_{\omega_{1}}|^{2}|f_{\omega_{2}}|^{2}\\
  &\lesssim \sum_{\omega_{1}\subset \tau_{1},\,\omega_{2}\subset \tau_{2}} \Big(\sum_{s_{1}\subset \omega_{1},\,s_{2}\subset \omega_{2}} \int_{Q} |f_{s_{1}}|^{2}|f_{s_{2}}|^{2}+K_{1}^{O(1)}\max_{s_{1}\not\sim s_{1}'\subset \omega_{1}}\max_{s_{2}\not\sim s_{2}'\subset \omega_{2}}\int_{Q}|f_{s_{1}}f_{s_{1}'}f_{s_{2}}f_{s_{2}'}|\Big)\\
  & \lesssim \sum_{s_{1}\subset \tau_{1},\,s_{2}\subset \tau_{2}}\int_{Q} |f_{s_{1}}|^{2}|f_{s_{2}}|^{2}+K_{1}^{O(1)}\sum_{\omega_{1}\subset \tau_{1}}\sum_{\omega_{2}\subset \tau_{2}}\int_{Q}|g_{\omega_{1}}g_{\omega_{2}}|^{2}\\
  &=\sum_{\beta_{1}\subset \tau_{1},\,\beta_{2}\subset \tau_{2}}\sum_{s_{1}\subset \beta_{1},\,s_{2}\subset \beta_{2}}\int_{Q}|f_{s_{1}}|^{2}|f_{s_{2}}|^{2}+K_{1}^{O(1)}\sum_{\omega_{1}\subset \tau_{1}}\sum_{\omega_{2}\subset \tau_{2}}\int_{Q}|g_{\omega_{1}}g_{\omega_{2}}|^{2}.
\end{aligned}
\end{equation}

Since
\[
d\cdot \frac{r}{K_{1}}\cdot R^{1/2} \ge \frac{K_{0}}{K_{1}},
\]
we may apply the reverse inequality in Lemma~\ref{lem equal} at the $\big(\frac{r}{K_{0}},\frac{r}{K_{1}}\big)$-scale, with $(r,K_0)$ replaced by $\big(\frac{r}{K_1},\frac{K_0}{K_1}\big)$, to obtain
\begin{equation}
    \int_{Q}|f_{\beta_{1}}|^{2}|f_{\beta_{2}}|^{2}\sim\sum_{s_{1}\subset \beta_{1},\,s_{2}\subset \beta_{2}}\int_{Q}|f_{s_{1}}|^{2}|f_{s_{2}}|^{2}.
\end{equation}

Combining the above estimates, we obtain \eqref{eq dich}.
    
\end{proof}
This decomposition naturally leads to a dichotomy according to which of the two terms in \eqref{eq dich} dominates.

\begin{definition}\label{def NB}
Let $(\tau_{1},\tau_{2})$ be a pair of $r$-squares in generic position, and let $d$ denote the distance between their centers. Assume that
\[
drR^{1/2}\ge K_{0}.
\]
We call $(\tau_{1},\tau_{2})$ \textbf{narrow} relative to $Q$ if the first term on the right-hand side of \eqref{eq dich} dominates, and \textbf{broad} relative to $Q$ otherwise.
\end{definition}

By an argument similar to that of Lemma~\ref{lem equal}, we have the following lemma.

\begin{lemma}[\cite{DW}]\label{lem 3.6}
Let $s_{i},s_{i}'$ be {non-adjacent} rectangles of dimensions $\big(\frac{r}{K_{2}},\frac{r}{K_{1}}\big)$ contained in some $\big(\frac{r}{K_{2}},r\big)$-rectangle $\omega_{i}$ in generic position. Assume that
\[
r^{2}R^{1/2}\ge K_{1}K_{2}.
\]
Then
\begin{equation}
\int_{Q}|f_{s_{i}}f_{s_{i}'}|^{2}\lesssim \int_{Q} \sum_{t_{i}\subset s_{i}}|f_{t_{i}}|^{2}\sum_{t_{i}'\subset s_{i}'}|f_{t_{i}'}|^{2},
\end{equation}
where $t_{i}$ and $t_{i}'$ are $\frac{r}{K_{2}}$-squares partitioning $s_{i}$ and $s_{i}'$, respectively.
\end{lemma}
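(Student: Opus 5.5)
The plan is to follow the proof of Lemma~\ref{lem equal}: rescale the configuration so that $s_i$ and $s_i'$ become a pair at distance $\sim 1$, apply Lemma~\ref{lem bilinear restriction} in the rescaled coordinates, and undo the rescaling. The new feature is that $s_i$ and $s_i'$ lie in the \emph{same} thin rectangle $\omega_i$. Since they are non-adjacent, the distance between their centers satisfies
\[
d' \gtrsim \frac{r}{K_1} \qquad\text{and}\qquad d'\le r .
\]
Moreover, the line joining the centers is essentially the long axis of $\omega_i$, up to angle $O(K_1/K_2)$. Because $\omega_i$ is in generic position, i.e.\ a good strip, every line joining a point of $s_i$ to a point of $s_i'$ is a good line, provided $K_2\gg K_1$ absorbs the angular error. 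Consequently the centers $\zeta,\zeta'$ of $s_i,s_i'$ are not bad points of each other, by Corollary~\ref{cor 1}. Proposition~\ref{prop bad1} then shows that the tangent planes at $\zeta$ and $\zeta'$ are not parallel, so the intersection direction $v(\zeta,\zeta')$ is defined, and Corollary~\ref{cor 1} says the lines through $\zeta,\zeta'$ parallel to $v(\zeta,\zeta')$ are good.

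Next, localize to $Q$ with a bump $\phi_Q$, so that $\widehat{\phi_Q f_{s_i}}$ lies in $N_{CR^{-1/2}}(\mathcal S_{s_i})$. Translate the midpoint of $\zeta,\zeta'$ to the origin and rescale the parameter plane by $1/d'$, with the matching parabolic rescaling in $\mathbb R^3$. The rescaled surface is still of the form \eqref{eq reduction}, and for $h=\xi\eta$ plus the small perturbation, the two rescaled tangent planes meet at angle $\sim1$. Each rescaled piece $\widetilde s_i$ is a rectangle of dimensions at most $\frac{r}{K_1 d'}\lesssim 1$. It sits inside the neighborhood of the tangent plane at its center of width
\[
\max\Bigl(\Bigl(\tfrac{r}{K_1d'}\Bigr)^2,\ \tfrac{1}{d'^2R^{1/2}}\Bigr).
\]
This neighborhood must be thin enough that Lemma~\ref{lem bilinear restriction} can be applied with
\[
\Delta\sim \frac{r}{K_2 d'} .
\]
The tube boxes of width $\Delta$ along the intersection line then cut $\widetilde s_i$ into pieces comparable to the rescaled $\frac{r}{K_2}$-squares $t_i$.

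Here a direct curvature bound is too weak, since $(r/(K_1d'))^2$ need not be $\lesssim r/(K_2d')$. This is the main obstacle, and I plan to use the anisotropy. Over the thin rectangle $s_i$, the surface deviates from a plane of width $\sim (r/K_2)\cdot(r/K_1)$ plus $(r/K_2)^2$, not $(r/K_1)^2$. The long direction of $\omega_i$ is a good direction; after the bad-line normalization of Proposition~\ref{prop bad2}, the restriction of $h$ to it has second derivative $\gtrsim \varepsilon_0^{1/2}$, so curvature along it is genuine. Since the boxes in Lemma~\ref{lem bilinear restriction} are unbounded along $\ell$, what matters is the deviation transverse to the direction of the intersection line.

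So I will decompose $\widetilde s_i$ as in Lemma~\ref{lem equal}. If $v(\zeta,\zeta')$ makes angle $\gtrsim1$ with the long side of $s_i$ (which I would verify from the gradient-difference formula in Corollary~\ref{cor 1}, using that the joining line is good), then the $\Delta$-boxes slice $s_i$ transversally to its long side into $\sim K_2/K_1$ pieces of size $\frac{r}{K_2}$, that is, exactly into the squares $t_i$. The thickness condition then reduces to two requirements:
\begin{itemize}
\item $\frac{1}{d'^2R^{1/2}}\lesssim \frac{r}{K_2d'}$, which follows from $d'\gtrsim r/K_1$ and $r^2R^{1/2}\ge K_1K_2$;
\item a quadratic curvature term, which is controlled by the rectangle's short width after a linear (shear) correction that is absorbed into the choice of plane.
\end{itemize}
Finally, rescaling back and using $|P_bF|\le$ the corresponding $f_{t_i}$ pieces up to bounded overlap, followed by Cauchy--Schwarz over the $O(1)$ overlaps, gives the claimed inequality on $Q$.
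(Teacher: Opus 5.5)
\textbf{There is a genuine gap: the pair $(s_i,s_i')$ can never be placed in $\Delta$-slabs of two planes, so Lemma~\ref{lem bilinear restriction} does not apply.}

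The failure is your second thickness requirement, together with the claim before it that $\mathcal S_{s_i}$ lies within $(r/K_2)(r/K_1)+(r/K_2)^2$ of a plane. Write $e$ for the long direction of $\omega_i$ and $(u,w)$ for coordinates along $(e,e^\perp)$ centred at $c(s_i)$. Adjusting the plane only absorbs affine terms. The $uw$ and $w^2$ terms are indeed $O(\frac{r}{K_1}\cdot\frac{r}{K_2})$ over $s_i$. The term $\frac12\partial_e^2h\,u^2$, however, is neither affine nor small. You note yourself that $|\partial_e^2h|\gtrsim\varepsilon_0^{1/2}$ because $e$ is a good direction.

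Consequently:
\begin{itemize}
\item $\mathcal S_{s_i}$ stays at distance $\gtrsim\varepsilon_0^{1/2}(r/K_1)^2$ from every plane, i.e.\ $\gtrsim(r/(K_1d'))^2$ after your rescaling.
\item $(r/(K_1d'))^2\lesssim r/(K_2d')$ would force $d'\gtrsim rK_2/K_1^2\gg r\ge d'$, so no choice of $\pi_1,\pi_2$ works with $\Delta\sim r/(K_2d')$.
\item Unboundedness of the boxes along $\ell$ does not help. $N_\Delta(\pi_j)$ is already invariant along $\ell$. Since $v(\zeta,\zeta')$ is transverse to $e$, projection along $\ell$ is injective on the vertical plane over $e$, so the projected piece is still an arc curved on the scale $\varepsilon_0^{1/2}(r/K_1)^2$.
\item Cutting $s_i$ into pieces flat enough for Lemma~\ref{lem bilinear restriction} costs a power of $K_2$, which the iteration in Lemma~\ref{lemma NB} cannot afford.
\end{itemize}

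\textbf{The missing idea} is to exploit this curvature rather than flatten it. The estimate is a C\'ordoba--Fefferman biorthogonality statement: the mechanism of the reverse half of the proof of Lemma~\ref{lem equal}, used here for the forward inequality. In effect it is the bilinear estimate for separated parabolic arcs in the direction $e$. Your first requirement is correct, and the same arithmetic closes the argument.

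It suffices to show that the sets $\mathcal N_{CR^{-1/2}}(\mathcal S_t)+\mathcal N_{CR^{-1/2}}(\mathcal S_{t'})$, with $t\subset s_i$ and $t'\subset s_i'$, overlap boundedly. If two of them meet, there are $\zeta_1,\zeta_1+a\in s_i$ and $\zeta_2\in s_i'$, with $\zeta_2-a$ within $O(R^{-1/2})$ of $s_i'$, such that
\[
\Psi(a):=h(\zeta_1+a)+h(\zeta_2-a)-h(\zeta_1)-h(\zeta_2)=O(R^{-1/2}).
\]
Write $a=\alpha e+\beta e^\perp$ with $|\beta|\lesssim r/K_2$, and $D=\zeta_2-\zeta_1=D_e e+O(r/K_2)\,e^\perp$. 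Taylor expansion gives
\[
\Psi(a)=\alpha(\alpha-D_e)\,\partial_e^2h(\zeta_1)+O\bigl(\varepsilon_0 d'|\alpha|+d'r/K_2\bigr).
\]
Non-adjacency gives $|\alpha-D_e|\ge\dist(s_i,s_i')-O(r/K_2)\gtrsim d'$, which rules out the swap $a\approx D$. Hence
\[
|\alpha|\lesssim_{\varepsilon_0}R^{-1/2}/d'+r/K_2\lesssim r/K_2,
\]
where the last step is exactly $d'\ge 2r/K_1$ combined with $r^2R^{1/2}\ge K_1K_2$. So $|a|\lesssim r/K_2$, which is the required bounded overlap.

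For comparison, the paper itself says only ``an argument similar to that of Lemma~\ref{lem equal}'' and cites \cite{DW}. The part of that analogy that carries over is the orthogonality argument, not the tangent-plane application of Lemma~\ref{lem bilinear restriction}.
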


In Demeter--Wu \cite{DW}, the corresponding lemma is stated for the almost-adjacent case, which is the worst case among the non-adjacent configurations. The same bound holds for rectangles that are farther apart. For simplicity, we state the lemma for general non-adjacent rectangles. 

By rescaling and Definition~\ref{def refineddecoupling}, we obtain the following proposition.

\begin{prop}\label{prop decouplingconstant}
Assume that $rR^{1/2}\ge K_1$. Let $\omega$ be a good strip of dimensions $\big(\frac{r}{K_0},r\big)$, and let $s,s'$ be two $\big(\frac{r}{K_0},\frac{r}{K_1}\big)$-rectangles inside $\omega$ that are {not adjacent.} Suppose that $X$ is a union of $R^{1/2}$-balls such that each ball intersects at most $M$ tubes from $\mathbb{T}(s)$ and at most $M'$ tubes from $\mathbb{T}(s')$. Then
\[
\int_X |f_s f_{s'}|^2
\lesssim
C\bigg(\frac{Rr^2}{K_1^2}\bigg)(MM')^{1/2}
\Bigg(\sum_{T\in \mathbb{T}(s)} \|f_T\|_4^4\Bigg)^{1/2}
\Bigg(\sum_{T\in \mathbb{T}(s')} \|f_T\|_4^4\Bigg)^{1/2}.
\]
\end{prop}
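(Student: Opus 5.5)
The plan is to reduce the estimate, by an anisotropic affine rescaling adapted to $\omega$, to the definition of $C(\cdot)$ at the smaller scale $Rr^2/K_1^2$ for a rescaled surface that is still of the form \eqref{eq reduction}.

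\emph{Step 1: normalize the strip.} Since $\omega$ is a good strip, its long direction makes angle $\gtrsim\varepsilon_0^{1/2}$ with both coordinate axes, and hence with every bad line (Proposition~\ref{prop bad2}). I first translate the center of $\omega$ to the origin and subtract the tangent plane of $h$ there. This subtraction corresponds to a Galilean change of variables in physical space, and it preserves wave packets and the tube counts $M,M'$.

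\emph{Step 2: rescale.} The two rectangles $s,s'$ have long side $r/K_1$, lie inside $\omega$, and are non-adjacent, so their centers are at distance between $\sim r/K_1$ and $r$ along the long direction of $\omega$. Let $\rho\sim r/K_1$ be the relevant length scale; for far-apart $s,s'$ I would use their actual separation and the monotonicity of $C$. I then rescale the parameter plane isotropically by $\rho^{-1}$ and the vertical variable by $\rho^{-2}$. After this, $s$ and $s'$ become sets of diameter $\lesssim 1$ at distance $\sim 1$ lying along a good line. A further transformation, made only of bounded linear maps, returns the quadratic part to the normal form $\xi\eta+O(\varepsilon_0)$. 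The rescaled polynomial then satisfies \eqref{eq smallcoeff}, because each higher-order coefficient picks up a factor $\rho^{i-2}\le 1$.

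\emph{Step 3: check the good-pair condition.} Every line joining a point of $\tilde s$ to a point of $\tilde s'$ stays within angle $O(1/K_0)$ of the long direction of $\omega$, up to the rescaling. By Lemma~\ref{lem stability} and Corollary~\ref{cor 1} (goodness is affine-invariant, as used in Lemma~\ref{lem equal}), $(\tilde s,\tilde s')$ is therefore a good pair. I would also cover $\tilde s,\tilde s'$ by $O(1)$ squares so that it matches Definition~\ref{def refineddecoupling}.

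\emph{Step 4: match the physical side.} Under the rescaling, the $R^{-1}$-neighborhood becomes an $R^{-1}\rho^{-2}$-neighborhood. So the new scale is $R'=R\rho^2=Rr^2/K_1^2$, which is $\ge 1$ because $rR^{1/2}\ge K_1$. Scale-$R$ wave packets of $f_s$ become scale-$R'$ wave packets. The $R^{1/2}$-balls map to sets covered by $O(1)$ $R'^{1/2}$-balls in the directions dual to the parameter rescaling. The incidence bounds $M,M'$ then pass to the new balls after averaging over translates: I would decompose $X$ into the images of $R'^{1/2}$-balls and apply the definition ball by ball. The $\|f_T\|_4^4$ sums transform with Jacobian factors that cancel on both sides. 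Applying the definition of $C(R')$ then gives the stated bound.

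I expect the main obstacle to be this last physical-space bookkeeping. Rescaling is anisotropic in $x$ (the factor is $\rho$ horizontally and $\rho^2$ in $t$), so an $R^{1/2}$-ball does not map to an $R'^{1/2}$-ball. I would handle this as in \cite{DW}: work with the rescaled functions on the $\rho^{-1}\times\rho^{-1}\times\rho^{-2}$ image, which is a union of $R'^{1/2}$-balls, and verify that each such ball still meets at most $M$ (resp.\ $M'$) rescaled tubes.
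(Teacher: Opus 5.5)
Your core argument matches the paper's. You translate, rescale isotropically by $\rho\sim r/K_1$ in the parameter plane and by $\rho^2$ vertically, and check that the rescaled phase is again of the form \eqref{eq reduction}. The $R^{-1}$-neighbourhood then becomes an $(R')^{-1}$-neighbourhood with $R'=Rr^2/K_1^2\ge 1$, you carry $M,M'$ over, and you apply the definition of $C(R')$. You are also right to flag the anisotropy of the physical rescaling, which the paper passes over with the bare assertion that multiplicities are preserved.

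The step that fails is your treatment of far-apart $s,s'$. Rescaling by the actual separation $d\in[2r/K_1,r]$ puts you at scale $Rd^2\ge Rr^2/K_1^2$. Reaching the stated $C(Rr^2/K_1^2)$ from there would require $C$ to be non-increasing. But that would give $C(R)\le C(1)$ for every $R$, a uniform bound stronger than Theorem~\ref{thm refineddecoupling} itself, so it cannot be invoked. With the usual normalisation $\sup_{R''\le R}C(R'')$, monotonicity runs the other way and only gives a bound at scale $Rr^2$. The paper's proof does not really handle this case either. Its rescaling by $r/K_1$ tacitly assumes $d\sim r/K_1$; otherwise the rescaled squares lie at distance up to $K_1$, outside the unit-separation setting of Definition~\ref{def refineddecoupling}. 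The correct repair is to prove the estimate with $C(Rd^2)$, i.e.\ with $\sup_{Rr^2/K_1^2\le R''\le Rr^2}C(R'')$ in place of $C(Rr^2/K_1^2)$. The closing computation after \eqref{eq relation} never uses the factor $K_1^{-2}$: the gain comes entirely from $r'\lesssim K_3^{-1}$, so the induction on scales still closes.

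There are two smaller corrections. First, lines joining $s$ to $s'$ deviate from the long direction of $\omega$ by up to $O(K_1/K_0)=O(K_1^2/K_2)$ (short side $r/K_0$ against separation $\gtrsim r/K_1$), not $O(1/K_0)$. This is still small, which is all the argument uses. Second, and more seriously, covering $\tilde s,\tilde s'$ by $O(1)$ squares of side comparable to $1$ destroys the good-pair property. Lines joining two unit squares at distance $\sim 1$ turn through angles $\sim 1$ and can become nearly axis-parallel. You must use squares small enough that all joining lines stay within the angular tolerance already enjoyed by $s,s'$, for instance of side comparable to the short side of $\tilde s$. This costs a power of $K_0/K_1\le K_2$, which is harmless because the $(K_1K_2)^{O(1)}$ prefactor in \eqref{eq relation} absorbs it. The paper's phrase ``regard $s$ and $s'$ as contained in two $\frac{r}{K_1}$-squares'' has exactly the same looseness.
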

\begin{proof}
Since $\omega$ is a good strip and $s,s'$ are {not adjacent}, every line joining a point of $s$ to a point of $s'$ is a good line. 

We regard $s$ and $s'$ as contained in two $\frac{r}{K_1}$-squares whose distance is at least $\frac{r}{K_1}$. We then translate the midpoints of these two squares to the origin, perform the isotropic scaling
\[
\xi=\frac{r}{K_{1}}\tilde{\xi}, \qquad \eta=\frac{r}{K_{1}}\tilde{\eta},
\]
on the parameter plane, and apply the parabolic rescaling
\[
\gamma=\Bigl(\frac{r}{K_{1}}\Bigr)^{2}\tilde{\gamma},
\]
in the third direction, where $\tilde{\xi},\tilde{\eta},\tilde{\gamma}$ are the new coordinates.

We denote by $\tilde{s},\tilde{s}'$ the resulting squares, whose sidelengths are $\sim 1$. The rescaled surface $\tilde{\mathcal S}$ is given by
\[
\tilde h=\tilde{\xi}\tilde{\eta}+\frac{K_{1}^{2}}{r^{2}}\rho\Bigl(\frac{r}{K_{1}}\tilde{\xi},\frac{r}{K_{1}}\tilde{\eta}\Bigr),
\]
modulo harmless affine terms. It is straightforward to verify that $\tilde{\mathcal S}$ still has the form \eqref{eq reduction}. Moreover, the original support in the $\frac{1}{R}$-neighborhood of $\mathcal S$ is carried to the $\frac{K_{1}^{2}}{Rr^{2}}$-neighborhood of $\tilde{\mathcal S}$, and the multiplicities $M,M'$ are preserved under this rescaling.

Therefore, by Definition~\ref{def refineddecoupling} and rescaling back, we obtain
\[
\int_{X}|f_{s}f_{s'}|^{2}\lesssim C\Bigl(\frac{Rr^{2}}{K_{1}^{2}}\Bigr)(MM')^{1/2}
\Bigl(\sum_{T\in \mathbb{T}(s)}\|f_{T}\|_{4}^{4}\Bigr)^{1/2}
\Bigl(\sum_{T\in \mathbb{T}(s')}\|f_{T}\|_{4}^{4}\Bigr)^{1/2}.
\]
This completes the proof.
\end{proof}

We now prove the key technical lemma, namely a dichotomy for the quantities $g_\omega$. Either one obtains a similar estimate at a smaller frequency scale, preserving the similar structure, or one arrives directly at the desired estimate. In this lemma, we do not yet obtain a completely self-similar structure. We transform rectangles of type $(r/K_{2},\,r)$ into rectangles of type $(r/K_{0},\,r)$. However, since their eccentricities differ by a factor of $K_{1}$, we will later further reduce the argument to a fully self-similar structure.

\begin{lemma}\label{lemma NB}
Assume that $r\gtrsim \frac{1}{K_3}$. Fix an $R^{1/2}$-ball $Q$. Let $\{\omega\}$ be a collection of pairwise disjoint $\big(\frac{r}{K_2},r\big)$-rectangles such that every pair is in generic position.
Then one of the following alternatives holds:
\begin{enumerate}
\item There exist $r'$ with
\[
R^{-1/2}K_1 \lesssim r' \le \frac{r}{K_2},
\]
and a collection of pairwise disjoint $\big(\frac{r'}{K_0},r'\big)$-rectangles $\omega' \subset \bigcup \omega$, again in generic position, such that
\begin{equation}\label{eq 3.81'}
\sum_{\omega} \| g_{\omega} \|_{L^{4}(Q)}^{2}
\lesssim_{\epsilon}
\left( \frac{r}{r'} \right)^{90 \frac{\log K_{1}}{\log K_{2}}}
\sum_{\omega''} \| g_{\omega''} \|_{L^{4}(Q)}^{2}.
\end{equation}

\item We have
\begin{equation}\label{eq 3.82'}
\sum_{\omega} \| g_{\omega} \|_{L^{4}(Q)}^{2}
\lesssim_{\epsilon}
R^{\epsilon} \sum_{\theta \subset \bigcup \omega} \| f_{\theta} \|_{L^{4}(Q)}^{2},
\end{equation}
where the sum is taken over a partition of $\bigcup \omega$ into $R^{-1/2}$-squares $\theta$.
\end{enumerate}
\end{lemma}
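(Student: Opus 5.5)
We iterate a one-step dichotomy, following the scheme of Demeter--Wu \cite{DW}. Fix an $(\frac{r}{K_2},r)$-rectangle $\omega$ and write $g_\omega^2=\sum_{s\not\sim s'\subset\omega}|f_sf_{s'}|$. Here $s,s'$ range over the $(\frac{r}{K_2},\frac{r}{K_1})$-rectangles partitioning $\omega$, so there are only $K_1^{O(1)}$ pairs. Hence
\[
\|g_\omega\|_{L^4(Q)}^4\lesssim K_1^{O(1)}\max_{s\not\sim s'}\int_Q|f_sf_{s'}|^2 .
\]
Since $\omega$ is a good strip and $s,s'$ are non-adjacent, every line joining $s$ to $s'$ is a good line. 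This is the input needed for Lemma~\ref{lem 3.6}. Provided $r^2R^{1/2}\ge K_1K_2$, which follows from $r\gtrsim K_3^{-1}$ and $K_1K_2K_3^2\ll R^{1/2}$, Lemma~\ref{lem 3.6} gives
\[
\int_Q|f_sf_{s'}|^2\lesssim\int_Q\sum_{t\subset s}|f_t|^2\sum_{t'\subset s'}|f_{t'}|^2 ,
\]
where $t,t'$ are $\frac{r}{K_2}$-squares. The pairs $(t,t')$ lie at distance $d\gtrsim r/K_1=K_0\cdot\frac{r}{K_2}$. Because the lines joining them are good, each pair $(t,t')$ is in generic position at scale $r_1=r/K_2$.

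We then apply Lemma~\ref{lem dichotomy} to each pair $(t,t')$, as long as $d\,r_1R^{1/2}\ge K_0$. Lemma~\ref{lem equal}, together with the stability Lemma~\ref{lem stability} (via the propagation Corollary~\ref{cor 1}), guarantees that the $(\frac{r_1}{K_0},r_1)$-rectangles it produces are again in generic position. Each pair is classified as broad or narrow in the sense of Definition~\ref{def NB}.

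In the broad case, the second term of \eqref{eq dich} dominates. This bounds $\|g_\omega\|_{L^4(Q)}^2$ by $K_1^{O(1)}$ times a sum of $\|g_{\omega'}\|_{L^4(Q)}^2$ over $(\frac{r_1}{K_0},r_1)$-rectangles $\omega'\subset\omega$. To turn $L^4$-norms of products into a sum of squared norms, we use Cauchy--Schwarz and the fact that each $g_{\omega'}$ is attached to a unique parent. We then sum over $\omega$, and alternative (1) holds with $r'=r_1$. The loss $K_1^{O(1)}$ equals $(r/r')^{O(\log K_1/\log K_2)}$ because $r/r'=K_2$. Keeping track of the constants should give the exponent $90$.

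In the narrow case, the first term of \eqref{eq dich} dominates. We then have a sum of $\int_Q\sum|f_{\beta}|^2\sum|f_{\beta'}|^2$ over $\frac{r_1}{K_1}$-squares, with the square-function structure preserved. The pairs $(\beta,\beta')$ are still in generic position and lie at distance $\ge K_0\cdot r_1/K_1$. So we apply Lemma~\ref{lem dichotomy} again at scale $r_1/K_1$ and continue. Each narrow step shrinks the scale by $K_1$ at cost $O(1)$. At the first broad stage at some scale $\rho$, we land in alternative (1) with $r'=\rho$. The accumulated loss is $C^{\#\text{steps}}K_1^{O(1)}$, where the number of steps is $\log(r/r')/\log K_1$. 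Since $C^{\log(r/r')/\log K_1}=(r/r')^{O(1/\log K_1)}$, this is absorbed into $(r/r')^{90\log K_1/\log K_2}$ because $\log K_1\,\log K_1\gg\log K_2$ for large $R$.

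If we never meet a broad stage, the iteration stops once $d\,\rho R^{1/2}<K_0$, that is, once $\rho\lesssim K_1R^{-1/2}$. At that point we are at essentially the $R^{-1/2}$-scale, and the bound is $\sum_\theta\|f_\theta\|_{L^4(Q)}^2$ via Cauchy--Schwarz, with losses $R^{O(\epsilon^{2})}\le R^\epsilon$. This gives alternative (2). The lower bound $r'\gtrsim R^{-1/2}K_1$ in (1) also follows from this stopping rule.

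The main obstacle is verifying that generic position propagates through every step. The rectangles produced by Lemma~\ref{lem equal} point in the direction $v(\zeta_1,\zeta_2)$, and this direction changes from pair to pair. We must check that these rectangles are good strips and that new pairs inside them are joined only by good lines. This is exactly what Corollary~\ref{cor 1} and Lemma~\ref{lem stability} supply, after rescaling so that the separation is $\sim1$ and the squares have size $\lesssim K_0^{-1}\le\varepsilon_0$. The secondary difficulty is the bookkeeping that keeps the exponent at $O(\log K_1/\log K_2)$.
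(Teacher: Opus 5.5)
Your outline follows the paper's proof step by step. You pick the worst non-adjacent pair $s\not\sim s'$ at a $K_1^{O(1)}$ cost, pass via Lemma~\ref{lem 3.6} to $\frac{r}{K_2}$-squares at distance $\gtrsim K_0\cdot\frac{r}{K_2}$ in generic position, and iterate Lemma~\ref{lem dichotomy} at $O(1)$ cost per narrow step, landing in (1) at the first broad scale and in (2) at the bottom. What is missing is the step that makes alternative (1) produce a \emph{single} pairwise disjoint family at a \emph{single} scale, as the statement requires.

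First, different pairs $(t,t')$ and different $\omega$ may turn broad at different scales, or reach the bottom. You must pigeonhole the type and the number $m$ of narrow steps, as the paper does. This is cheap, since there are only $O_\epsilon(1)$ possibilities. Second, and more substantively, the $(\frac{r_1}{K_0},r_1)$-rectangles produced at a broad pair $(\sigma_1,\sigma_2)$ are aligned with $v(c(\sigma_1),c(\sigma_2))$. So a fixed $\sigma_1$ is cut in a different orientation for each of its many partners $\sigma_2\subset s'$. Your Cauchy--Schwarz step then bounds $\|g_\omega\|_{L^4(Q)}^2$ by a sum over a family in which $\sigma_1$ is covered once per orientation occurring among its partners. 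The unique-parent remark only excludes overlap between different $\omega$. Disjointness is asserted in (1) and used in Proposition~\ref{prop NB} and Proposition~\ref{prop compute} to count wave packets. Restoring it costs the number of distinct orientations, which is not $K_1^{O(1)}$ without an argument. Meanwhile $(r/r')^{90\log K_1/\log K_2}$ absorbs only $K_1^{O(1)}$; it equals $K_1^{90}$ when $r/r'=K_2$.

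The paper supplies this step explicitly by rearranging the orientations to a common one. Its justification is that after rescaling a pair to unit separation the squares have size at most $K_1/K_2$. The intersection directions therefore vary by $O(K_1/K_2)$, comparable to the eccentricity $K_0^{-1}$ of the rectangles. This costs $K_1^{O(1)}$, and Lemma~\ref{lem stability} keeps the rotated rectangles good strips. You need an argument of this kind. When you write it, note that for non-quadratic $h$ the direction $v(\zeta_1,\zeta_2)$ depends on the Hessian along $[\zeta_1,\zeta_2]$, not only on the direction of $\zeta_2-\zeta_1$. So the spread of orientations over all partners of a given square, not just within one pair, must be compared with $K_0^{-1}$.

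A smaller point concerns the stopping threshold. The condition $d\,r_1R^{1/2}\ge K_0$ can already fail at $r_1\sim K_2K_3R^{-1/2}$, not only at $K_1R^{-1/2}$, because $d$ may be as small as $r/K_1\gtrsim (K_1K_3)^{-1}$. This is why the paper splits at $K_2K_3R^{-1/2}$. Your $R^{O(\epsilon^2)}$ loss in (2) already matches this threshold, so only the stated value needs correcting.
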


\begin{proof}
For any $\omega$, we can choose $\frac{r}{K_{1}}$-separated $\bigl(\frac{r}{K_{2}},\frac{r}{K_{1}}\bigr)$-rectangles $s_{1}(\omega), s_{2}(\omega) \subset \omega$ such that
\[
\|g_{\omega}\|_{L^{4}(Q)}^{2} \leq K_{1}^{O(1)} \left( \int_{Q} |f_{s_{1}(\omega)} f_{s_{2}(\omega)}|^{2} \right)^{\frac{1}{2}}.
\]

Since $r \gtrsim \frac{1}{K_{3}}$, we have $r^{2}R^{\frac{1}{2}} \gtrsim K_{1}K_{2}$. Applying Lemma \ref{lem 3.6} to each $\omega$ yields
\begin{equation}\label{eq partition1}
    \sum_{\omega} \|g_{\omega}\|_{L^{4}(Q)}^{2} \lesssim K_{1}^{O(1)} \sum_{\omega} \left( \sum_{\alpha_{1} \subset s_{1}(\omega)} \sum_{\alpha_{2} \subset s_{2}(\omega)} \int_{Q} |f_{\alpha_{1}} f_{\alpha_{2}}|^{2} \right)^{\frac{1}{2}},
\end{equation}
where the pairs $(\alpha_{1}, \alpha_{2})$ consist of two $\frac{r}{K_{2}}$-squares whose distance $d'$ is at least $\frac{r}{K_{1}}$ inside $\omega$. 
Since $\omega$ is in generic position, and the angle between the long side of $\omega$ and the line joining $\alpha_{1}$ and $\alpha_{2}$ is less than $\frac{K_{1}}{K_{2}}$, Lemma~\ref{lem stability} implies that the line joining $\alpha_{1}$ and $\alpha_{2}$ is a good line. Moreover, we have
\[
d' \gtrsim \frac{r}{K_{1}} = K_{0} \cdot \frac{r}{K_{2}},
\]
so each pair $(\alpha_{1},\alpha_{2})$ is in generic position.
Furthermore, the sidelength $\frac{r}{K_{2}}$ of the squares and the distance $d'$ between them satisfy
\[
d' \cdot \frac{r}{K_{2}} R^{\frac{1}{2}} \geq K_{2}\geq K_{0},
\]
since $r \gtrsim \frac{1}{K_{3}}$. Therefore, by Definition~\ref{def NB}, each pair is either narrow or broad. We may assume that all pairs $(\alpha_{1},\alpha_{2})$ are of the same type, either narrow or broad. For a detailed explanation, see \cite{DW}.

\textbf{Case 1}: Suppose all pairs $(\alpha_{1},\alpha_{2})$ are narrow.
We start with each $\frac{r}{K_{2}}$-pair $(\alpha_{1},\alpha_{2})$ and obtain
\[
\int_{Q}|f_{\alpha_{1}}f_{\alpha_{2}}|^{2}\leq C\int_{Q}\sum_{\beta_{1}\subset \alpha_{1}}|f_{\beta_{1}}|^{2}\sum_{\beta_{2}\subset \alpha_{2}}|f_{\beta_{2}}|^{2},
\]
where $\beta_{i}$ are $\frac{r}{K_{1}K_{2}}$-squares partitioning $\alpha_{i}$.

By the same reasoning, we can verify that $(\beta_{1}, \beta_{2})$ satisfies Definition \ref{def NB}; therefore, a dichotomy exists for $(\beta_{1}, \beta_{2})$.
Let us assume that the narrow case continues for as many steps $m$ as possible, where $m\ge 1$. We obtain
\begin{equation}\label{eq partition2}
    \int_{Q}|f_{\alpha_{1}}f_{\alpha_{2}}|^{2}\leq C^{m}\int_{Q}\sum_{\sigma_{1}\subset \alpha_{1}}|f_{\sigma_{1}}|^{2}\sum_{\sigma_{2}\subset\alpha_{2}}|f_{\sigma_{2}}|^{2},
\end{equation}
where $\sigma_{i}$ are $r_{1}\coloneqq\frac{r}{K_{2}K_{1}^{m}}$-squares partitioning $\alpha_{1}$. For all $(\alpha_{1},\alpha_{2})$, the value of $m$ is the same. Then we have the following two subcases.

\textbf{Subcase 1:} Suppose $R^{-\frac{1}{2}}\lesssim r_{1}\lesssim {K_{3}K_{2}}R^{-\frac{1}{2}}$. In this subcase, the narrow case reach the bottom scale $R^{-\frac{1}{2}}$
essentially. We can obtain \eqref{eq 3.82} by the Minkowski inequality.

\textbf{Subcase 2:} Suppose $r_{1}\gtrsim K_{2}K_{3}R^{-\frac{1}{2}}$. 
We denote the distance between $\sigma_{1}$ and $\sigma_{2}$ by $d_{1}$. 
We note that the distance between squares essentially does not decrease when we scale down at each step. 
Thus, $d_{1}\gtrsim \frac{r}{K_{1}}\ge K_{0}r_{1}$ and $(\sigma_{1},\sigma_{2})$ is in a generic position. 
Moreover, since $r \gtrsim \frac{1}{K_{3}}$ and $r_{1} \gtrsim K_{2}K_{3}R^{-\frac{1}{2}}$, it follows that
\[
d_{1} r_{1} R^{\frac{1}{2}} \gtrsim K_{0}.
\] 
Therefore, $(\sigma_{1},\sigma_{2})$ is narrow or broad according to Definition \ref{def NB}. 
Since the narrow case will halt after $m$ steps, we obtain 
\begin{equation}\label{eq partition3}
    \int_{Q}|f_{\sigma_{1}}f_{\sigma_{2}}|^{2}\lesssim K_{1}^{O(1)}\sum_{\omega_{1}\subset \sigma_{1}}\sum_{\omega_{2}\subset \sigma_{2}}\int_{Q}|g_{\omega_{1}}g_{\omega_{2}}|^{2}.
\end{equation}
Here $\omega_{i}$ are $(\frac{r_{1}}{K_{0}},r_{1})$-rectangles in generic position.

Let us explain how the orientation of $\omega_{i}$ is determined: it is determined by the pair $(\sigma_{1},\sigma_{2})$. Before applying Lemma \ref{lem bilinear restriction}, we do isotropic rescaling so that the distance between $\tilde \sigma_{1}$ and $\tilde \sigma_{2}$ is $\sim 1$, where $\tilde \sigma_{j}$ denotes the square $\sigma_{j}$ after rescaling and use $\bar h$ to denote the surface map after rescaling. We then use Lemma \ref{lem bilinear restriction} and partition $\tilde \sigma_{j}$ into $\tilde \omega_{j}$ along the projection of the line of intersection of the two tangent planes. Finally, we rescale $\tilde \omega_{j}$ back to obtain $\omega_{j}$. By Lemma \ref{lem stability}, we note that $\tilde \omega_{j}$ are good strips in rescaled parameter plane. Since $\tilde \omega_{j}$ are good strips in rescaled parameter plane, $\omega_{j}$ are still good strip after affine transformation. 

Notice that for different pairs $(\sigma_{1},\sigma_{2})$, the orientation may be different. However, we can rearrange the orientations of $\omega_{j}$ to be uniform. For any two points $p_{1}\in \tilde \sigma_{1}$ and $p_{2}\in \tilde \sigma_{2}$, the intersecting line of their tangent planes points in $(\bar h_{\eta}(p_{2})-\bar h_{\eta}(p_{1}),\bar h_{\xi}(p_{1})-\bar h_{\xi}(p_{2}))$ and $\tilde \sigma_{j}$ are at most $\frac{K_{1}}{K_{2}}$-squares, then the angular deviation after rearrangement does not exceed $\frac{K_{1}}{K_{2}}$. 
Moreover, since isotropic dilation preserves angles and the eccentricity of $\omega_{j}$ itself is $\frac{1}{K_{0}}$, we incur an acceptable loss of $K_{1}^{O(1)}$. Furthermore, since the rearrangement rotates each rectangle by a small angle $O\bigl(\frac{K_{1}}{K_{2}}\bigr)$, these $\omega_{j}$ still remain good strips by Lemma~\ref{lem stability}.

Combining with \eqref{eq partition1} \eqref{eq partition2} and \eqref{eq partition3}, we obtain
\begin{equation}
    \|g_{\omega}\|_{L^{4}(Q)}^{2}\lesssim C^{m}K_{1}^{O(1)}(\sum_{\omega_{1}\subset s_{1}(\omega)}\sum_{\omega_{2}\subset s_{2}(\omega)}\int_{Q}|g_{\omega_{1}}g_{\omega_{2}}|^{2})^{\frac{1}{2}}\lesssim_{\epsilon}K_{1}^{90}(\int_{Q}(\sum_{\omega''\subset \omega}|g_{\omega''}|^{2})^{2})^{\frac{1}{2}}. 
\end{equation}
Here $\omega''$ is simply the generic notation for either $\omega_{1}$ or $\omega_{2}$ and a rectangle with dimensions $(\frac{r_{1}}{K_{0}},r_{1})$.
Next, sum over $\omega$ and apply the Minkowski inequality, then we have
\begin{equation}
    \sum_{\omega}\|g_{\omega}\|_{L^{4}(Q)}^{2}\lesssim_{\epsilon} K_{1}^{90} \sum_{\omega}\sum_{\omega''\subset \omega}\|g_{\omega''}\|_{L^{4}(Q)}^{2}.
\end{equation}

Finally, we take $r'=r_1$. Since $r_1=r/(K_2K_1^m)$ for some $m\ge 1$, we have
\[
\Bigl(\frac{r}{r'}\Bigr)^{90\frac{\log K_1}{\log K_2}}
\ge K_1^{90},
\]
which absorbs the factor $K_{1}^{90}$. Hence we obtain \eqref{eq 3.81}.

\textbf{Case 2:} Suppose all pairs $(\alpha_{1},\alpha_{2})$ are broad. This is the case $m = 0$ in \textbf{Subcase 2}. By performing the same discussion as in \textbf{Subcase 2}, we obtain the inequality \eqref{eq 3.81}.

\end{proof}

We now convert the $\left(\frac{r}{K_{0}},r\right)$-type rectangles obtained above into $\left(\frac{r}{K_{2}},r\right)$-type rectangles. During this conversion, since the eccentricities differ by only a factor of $K_{1}$, we incur an acceptable loss of $K_{1}^{O(1)}$. The motivation for doing this is to ensure that this proposition can be applied repeatedly in an iterative manner later on.

\begin{prop}\label{prop NB}
Assume that $r\gtrsim \frac{1}{K_3}$. Fix an $R^{1/2}$-ball $Q$. Let $\{\omega\}$ be a collection of pairwise disjoint $\big(\frac{r}{K_2},r\big)$-rectangles such that every pair is in generic position.
Then one of the following alternatives holds:
\begin{enumerate}
\item There exist $r'$ with
\[
R^{-1/2}K_1 \lesssim r' \le \frac{r}{K_2},
\]
and a collection of pairwise disjoint $\big(\frac{r'}{K_2},r'\big)$-rectangles $\omega' \subset \bigcup \omega$, again in generic position, such that
\begin{equation}\label{eq 3.81}
\sum_{\omega} \| g_{\omega} \|_{L^{4}(Q)}^{2}
\lesssim_{\epsilon}
\left( \frac{r}{r'} \right)^{100 \frac{\log K_{1}}{\log K_{2}}}
\sum_{\omega'} \| g_{\omega'} \|_{L^{4}(Q)}^{2}.
\end{equation}

\item We have
\begin{equation}\label{eq 3.82}
\sum_{\omega} \| g_{\omega} \|_{L^{4}(Q)}^{2}
\lesssim_{\epsilon}
R^{\epsilon} \sum_{\theta \subset \bigcup \omega} \| f_{\theta} \|_{L^{4}(Q)}^{2},
\end{equation}
where the sum is taken over a partition of $\bigcup \omega$ into $R^{-1/2}$-squares $\theta$.
\end{enumerate}
\end{prop}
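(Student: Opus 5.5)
The plan is to derive Proposition~\ref{prop NB} directly from Lemma~\ref{lemma NB}. The only difference is the shape of the output rectangles: the lemma produces $\big(\frac{r'}{K_0},r'\big)$-rectangles $\omega''$, while the proposition asks for $\big(\frac{r'}{K_2},r'\big)$-rectangles $\omega'$. The hypotheses are identical, so we apply Lemma~\ref{lemma NB} to the given family $\{\omega\}$.

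If alternative (2) of the lemma holds, it is verbatim alternative (2) of the proposition and there is nothing to prove.

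If alternative (1) holds, we get $r'$ with $R^{-1/2}K_1\lesssim r'\le r/K_2$ and a family of $\big(\frac{r'}{K_0},r'\big)$-rectangles $\omega''$ satisfying \eqref{eq 3.81'}. We then proceed in three steps.

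\textbf{Step 1: refine the rectangles.} Since $K_2 = K_0K_1$, each $\omega''$ splits along its short direction into $K_1$ parallel $\big(\frac{r'}{K_2},r'\big)$-rectangles $\omega'$. Each $\omega'$ has the same long direction as its parent $\omega''$. Any line meeting $\omega'$ in length $\ge r'/2$ is therefore a line meeting $\omega''$ in a long segment, so it is good. Hence each $\omega'$ is a good strip, i.e. in generic position. By construction the $\omega'$ are pairwise disjoint and contained in $\bigcup\omega''\subset\bigcup\omega$.

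\textbf{Step 2: bound $g_{\omega''}$ by the $g_{\omega'}$.} The scale-$r'/K_1$ pieces $s\subset\omega''$ used to define $g_{\omega''}$ have dimensions $\big(\frac{r'}{K_0},\frac{r'}{K_1}\big)$. Each such $s$ is a union of $K_1$ pieces $\tilde s=s\cap\omega'$ of dimensions $\big(\frac{r'}{K_2},\frac{r'}{K_1}\big)$, which are exactly the pieces used to define $g_{\omega'}$. Writing $f_s=\sum_{\omega'}f_{s\cap\omega'}$ and expanding $|f_sf_{s'}|$ with the triangle inequality gives
\[
g_{\omega''}^2 \le \sum_{\omega_a',\omega_b'\subset\omega''}\ \sum_{s\not\sim s'}|f_{s\cap\omega_a'}\,f_{s'\cap\omega_b'}|.
\]
Every term with $\omega_a'=\omega_b'$ is a term of $g_{\omega'}^2$, because non-adjacency is measured along the long direction and is inherited by the pieces. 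The cross terms with $\omega_a'\neq\omega_b'$ pair two different thin strips, and we control them by Cauchy--Schwarz in the index pair $(\omega_a',\omega_b')$ to reduce to the diagonal terms. There are $K_1^2$ pairs, so this loses $K_1^{O(1)}$. Taking $L^4(Q)$ norms and using Minkowski gives
\[
\|g_{\omega''}\|_{L^4(Q)}^2\lesssim K_1^{O(1)}\sum_{\omega'\subset\omega''}\|g_{\omega'}\|_{L^4(Q)}^2 .
\]

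\textbf{Step 3: absorb the loss.} Since $r'\le r/K_2$, we have $(r/r')^{\log K_1/\log K_2}\ge K_1$. A fixed power $K_1^{C}$ with $C\le 10$ is therefore absorbed by raising the exponent in \eqref{eq 3.81'} from $90$ to $100$, which gives \eqref{eq 3.81}.

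The main obstacle is Step 2, the cross terms $s\cap\omega_a'$ versus $s'\cap\omega_b'$ with $\omega_a'\ne\omega_b'$. These pairs are not literally terms of any $g_{\omega'}$. We must check that pointwise Cauchy--Schwarz,
\[
|f_{s\cap\omega_a'}f_{s'\cap\omega_b'}|\le \tfrac12\big(|f_{s\cap\omega_a'}|^2+|f_{s'\cap\omega_b'}|^2\big),
\]
is not used, since that would destroy the bilinear structure. If a bilinear reduction to the diagonal is unavailable, the fallback is to bound each cross term using $|f_{s\cap\omega_a'}f_{s'\cap\omega_b'}|^{1/2}$ with factors from both thin strips, and to note that all these strips have the same orientation, which is what the later iteration uses. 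Only the harmless $K_1^{O(1)}$ loss is then paid.
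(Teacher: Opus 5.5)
Your reduction to Lemma~\ref{lemma NB}, your handling of alternative (2), Step 1, and the absorption of $K_1^{O(1)}$ in Step 3 are all fine. Step 2, however, has a genuine gap, and it sits exactly at the point you flag.

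A cross term $|f_{s\cap\omega_a'}f_{s'\cap\omega_b'}|$ with $\omega_a'\neq\omega_b'$ is not a term of $g_{\omega'}^2$ for any $\omega'$, because $g_{\omega'}$ only sees non-adjacent pairs lying in one and the same rectangle. No Cauchy--Schwarz in the index $(a,b)$ converts $\sum_{a\neq b}|F_aG_b|$ into diagonal products. Any such attempt produces quantities like $\|\sum_s|f_{s\cap\omega_a'}|\|_{L^4(Q)}^2$, which contain exactly the self- and adjacent products that $g_{\omega'}$ excludes.

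In fact, with the $\omega'$ parallel to $\omega''$, the inequality $\|g_{\omega''}\|_{L^4(Q)}^2\lesssim K_1^{O(1)}\sum_{\omega'\subset\omega''}\|g_{\omega'}\|_{L^4(Q)}^2$ is false in general. Take $f$ supported in one piece $\tilde s\subset\omega_1'$ of $s$ and one piece $\tilde s'\subset\omega_2'$ of $s'$, with $s\not\sim s'$. Then every $g_{\omega'}$ vanishes while $g_{\omega''}$ does not. Your fallback (all strips share an orientation) does not repair this: what is needed is the two pieces inside a single rectangle, not merely parallel rectangles.

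The missing idea, which is what the paper does, is to tilt the thin rectangles.
\begin{itemize}
\item Since $\tilde s,\tilde s'$ are non-adjacent, their centers are $\gtrsim r'/K_1$ apart along the long axis of $\omega''$, while their transverse offset is at most $r'/K_0$. Hence the line joining them makes angle $\lesssim K_1/K_0=K_1^2/K_2$ with that axis.
\item One introduces a family $\{\bar\omega'\}$ of $(r'/K_2,r')$-rectangles with directions within angle $O(K_1^2/K_2)$ of $\omega''$, separated by $r'/K_2$ in position and $K_1/K_2$ in angle.
\item Each non-adjacent pair $(\tilde s,\tilde s')$ is assigned to a $\bar\omega'$ containing slightly rotated copies of both pieces. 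The assigned sum is bounded by $K_1^{5}g_{\bar\omega'}^2$, so every cross pair becomes a genuine non-adjacent pair inside one rectangle.
\item Minkowski, together with the fact that each $\bar\omega'$ meets $\lesssim K_1$ of the $\omega''$, gives $\sum_{\omega''}\|g_{\omega''}\|_{L^4(Q)}^2\lesssim K_1^{6}\sum_{\bar\omega'}\|g_{\bar\omega'}\|_{L^4(Q)}^2$.
\item The tilted family overlaps, but the proposition demands pairwise disjoint output. So one splits $\{\bar\omega'\}$ by direction into $O(K_1)$ subfamilies of pairwise disjoint rectangles and pigeonholes the largest, at a further cost of $K_1$.
\item The tilted rectangles must also be checked to remain in generic position. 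This uses that the tilt is only $O(K_1^2/K_2)$, via Lemma~\ref{lem stability}.
\end{itemize}
The total $K_1^{10}$ loss is then absorbed exactly as in your Step 3. Consequently the output rectangles $\omega'$ are in general not parallel to the $\omega''$, so the parallel refinement of your Step 1 cannot be kept as the final family.
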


\begin{proof}
The narrow case is the same as the above lemma. We next continue the proof in broad case by converting the family of rectangles $\omega''$ of dimensions $\left(\frac{r_{1}}{K_{0}},r_{1}\right)$ obtained above into the desired family of rectangles $\omega'$ of dimensions $\left(\frac{r_{1}}{K_{2}},r_{1}\right)$, at the cost of an admissible loss of $K_1^{10}$.

Partition each old rectangle $s$ of dimensions $\bigl(\frac{r_1}{K_0},\frac{r_1}{K_1}\bigr)$ into $O(K_1)$ new rectangles $\tilde s$ of dimensions $\bigl(\frac{r_1}{K_2},\frac{r_1}{K_1}\bigr)$ along the direction of the short side of $\omega''$, and similarly for $s'$. Then
\[
f_s=\sum_{\tilde s\subset s} f_{\tilde s},
\]
and hence
\[
\sum_{s\not\sim s'\subset \omega''} |f_s f_{s'}|
\le
\sum_{\tilde s\not\sim \tilde s'\subset C\omega''} |f_{\tilde s} f_{\tilde s'}|.
\]

For every pair $(\tilde s,\tilde s')$ appearing above, let $\ell(\tilde s,\tilde s')$
denote the line joining the centers of $\tilde s$ and $\tilde s'$. Since $\omega''$
has dimensions $\bigl(\frac{r_1}{K_0},r_1\bigr)$ and $\tilde s,\tilde s'$ are {not
adjacent}, their separation along the long direction is $\gtrsim r_1/K_1$. Therefore the angle between $\ell(\tilde s,\tilde s')$ and the long side of $\omega''$ is
\[
\lesssim \frac{r_1/K_0}{r_1/K_1}
=
\frac{K_1^2}{K_2}.
\]

We construct a family of rectangles $\{\bar\omega\}'$ of dimensions $\left(\frac{r_{1}}{K_{2}},r_{1}\right)$, consisting of all rectangles $\bar\omega'$ whose long-axis direction makes an angle at most $O\!\left(\frac{K_{1}^{2}}{K_{2}}\right)$ with that of $\omega''$, and which are separated by $\frac{r_{1}}{K_{2}}$ in position and by $\frac{K_{1}}{K_{2}}$ in angle.

For any two non-adjacent rectangles $\tilde s,\tilde s'$, one can find a rectangle $\bar\omega'$ such that $\bar\omega'$ contains two rectangles $\bar s,\bar s'$ of dimensions $\left(\frac{r_{1}}{K_{2}},\frac{r_{1}}{K_{1}}\right)$, where $\bar s$ and $\bar s'$ are respectively obtained from $\tilde s$ and $\tilde s'$ by a rotation of size $\lesssim \frac{K_{1}^{2}}{K_{2}}$. We call this relation that $\tilde s,\tilde s'$ are assigned to $\bar\omega'$. Let
\[
H_{\omega'',\bar \omega'}
:=
\sum_{\substack{
\tilde s\not\sim \tilde s'\\
(\tilde s,\tilde s')\subset C\omega''\\
(\tilde s,\tilde s')\text{ is assigned to }\bar \omega'
}}
|f_{\tilde s}f_{\tilde s'}|.
\]
Naturally, for each fixed $\omega''$, we have
\begin{equation}\label{eq square1}
    g_{\omega''}^{\,2}
\le
\sum_{\bar \omega': \text{$\bar \omega'$ intersect $ \omega''$} } H_{\omega'',\bar \omega'}.
\end{equation}

Since the angular deviation is less than $O(\frac{K_{1}^{2}}{K_{2}})$, and each $\tilde s$ has eccentricity $\frac{K_{1}}{K_{2}}$, with $K_{1}$ copies of $\tilde s$ in each $\bar \omega'$, we thus have
\begin{equation}\label{eq square2}
    H_{\omega'',\bar \omega'}\lesssim K_{1}^{5}g_{\bar \omega'}^{2}. 
\end{equation}

Therefore, we compute the following by Minkowski inequality and inequality \eqref{eq square1} 
\begin{equation}
\begin{aligned}
&\|g_{\omega''}\|_{L^4(Q)}^2
=
\|g_{\omega''}^{\,2}\|_{L^2(Q)}\\
&\le
\Bigl\|
\sum_{\bar \omega': \text{$\bar \omega'$ intersect $ \omega''$} } H_{\omega'',\bar \omega'}
\Bigr\|_{L^2(Q)}\\
&\le
\sum_{\bar \omega': \text{$\bar \omega'$ intersect $ \omega''$} } \Bigl\| H_{\omega'',\bar \omega'}
\Bigr\|_{L^2(Q)}.
\end{aligned}
\end{equation}

Summing over $\omega''$ and
interchanging the order of summation, we obtain
\[
\sum_{\omega''}\|g_{\omega''}\|_{L^4(Q)}^2
\le
\sum_{\bar \omega'}\sum_{\omega'':\text{$\omega''$ intersects $\bar \omega'$}}\|H_{\omega'',\bar \omega'}\|_{L^2(Q)}.
\]

Since there are at most $K_{1}$ many $\omega''$ intersecting any $\bar \omega'$, we have the following inequality by \eqref{eq square2}

$$\sum_{\omega''}\|g_{\omega''}\|_{L^4(Q)}^2
\lesssim K_{1}^{6} \sum_{\bar \omega'}\|g_{\bar \omega'}\|_{L^4(Q)}^2.
$$

Observe that although the family $\{\bar\omega'\}$ already consists of rectangles of dimensions
$\left(\frac{r_{1}}{K_{2}},r_{1}\right)$, the rectangles in this family are not pairwise disjoint.
We therefore partition $\{\bar\omega'\}$ into $K_{1}$ subfamilies
$\{\omega_{a}\}$, $1\leq a\leq K_{1}$, according to their directions, so that within each subfamily the rectangles are pairwise disjoint.
Let
\[
\sum_{\omega_{a_{0}}}\|g_{\omega_{a_{0}}}\|_{L^{4}(Q)}^{2}=\max_{1\leq a\leq K_{1}}
\sum_{\omega_{a}}\|g_{\omega_{a}}\|_{L^{4}(Q)}^{2}.
\]
We then denote the corresponding subfamily $\{\omega_{a_{0}}\}$ by $\{\omega'\}$.
By the pigeonhole principle, we obtain
\[
\sum_{\bar\omega'}\|g_{\bar\omega'}\|_{L^{4}(Q)}^{2}
\lesssim
K_{1}\sum_{\omega'}\|g_{\omega'}\|_{L^{4}(Q)}^{2}.
\]

Combining the above inequalities, we obtain

\begin{equation}
    \sum_{\omega''}\|g_{\omega''}\|_{L^{4}(Q)}^{2}\lesssim K_{1}^{10}\sum_{\omega'}\|g_{\omega'}\|_{L^{4}(Q)}^{2}.
\end{equation}

Incorporating the proof of the broad case in the previous lemma, and since $(\frac{r}{r'})^{100 \frac{\log K_{1}}{\log K_{2}}}$ absorbs the losses $K_{1}^{90}$ and $K_{1}^{10}$, we obtain inequality \eqref{eq 3.81} and $\{\omega'\}$ is a collection of pairwise disjoint $(\frac{r_{1}}{K_{2}},r_{1})$-rectangles. 

\end{proof}

\begin{remark}
 For different values of $Q$, we obtain different partitions $\{\omega'\}$. However, we note that each partition is completely determined by the scale $r'$ of $\omega'$. Since the scale $r'$ can take $m$ distinct values, there are $m$ types of partitions, where $m = O_{\epsilon}(1)$.
\end{remark}

Repetition of the previous proposition until the strip scale reaches $K_3^{-1}$, we obtain the following version without restriction of $r$.

\begin{prop}\label{prop downscale}
Let $r\leq 1$. Fix an $R^{\frac{1}{2}}$-ball $Q$. Assume $\{\omega\}$ is a collection of $(\frac{r}{K_{2}},r)$-rectangles in generic position. Then there is a scale $K_{1}R^{-\frac{1}{2}}\lesssim r' \lesssim K_{3}^{-1}$ and there is a collection $\{\omega'\}$ consisting of pairwise disjoint $(\frac{r'}{K_{2}},r')$-rectangles $\omega'\subset \bigcup\omega$ in generic
position such that
\begin{equation}
    \sum_{\omega}\|g_{\omega}\|_{L^{4}(Q)}^{2}\lesssim_{\epsilon}\sum_{\omega'}(\frac{r}{r'})^{100\frac{\log K_{1}}{\log K_{2}}}\|g_{\omega'}\|_{L^{4}(Q)}^{2}+R^{\epsilon}\sum_{\theta\subset \cup \omega} \|f_{\theta}\|_{L^{4}(Q)}^{2},
\end{equation}
where the last sum is over a partition of $\bigcup \omega$ into $R^{-\frac{1}{2}}$-squares $\theta$.
\end{prop}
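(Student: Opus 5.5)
We iterate Proposition~\ref{prop NB}. Set $r_0:=r$ and $\{\omega^{(0)}\}:=\{\omega\}$. If $r\lesssim K_3^{-1}$, the statement holds trivially with $r'=r$ and $\{\omega'\}=\{\omega\}$; since the rectangles are in generic position we may assume they are pairwise disjoint, after pigeonholing into $O(1)$ disjoint subfamilies. If $r\gtrsim K_3^{-1}$, we apply Proposition~\ref{prop NB} to $\{\omega^{(0)}\}$. In alternative (2), the second term on the right-hand side already dominates and we stop. In alternative (1), we obtain a scale $r_1\le r_0/K_2$ and a new disjoint family $\{\omega^{(1)}\}\subset\bigcup\omega^{(0)}$ of $(r_1/K_2,r_1)$-rectangles in generic position, with
\[
\sum\|g_{\omega^{(0)}}\|_{L^4(Q)}^2\lesssim_\epsilon C_\epsilon\Bigl(\frac{r_0}{r_1}\Bigr)^{100\frac{\log K_1}{\log K_2}}\sum\|g_{\omega^{(1)}}\|_{L^4(Q)}^2 .
\]
If $r_1\lesssim K_3^{-1}$ we stop; otherwise we reapply Proposition~\ref{prop NB}. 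Its hypotheses are exactly reproduced at each step: the rectangles are pairwise disjoint, have the correct eccentricity, are in generic position, and satisfy $r_j\gtrsim K_3^{-1}$.

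\textbf{Termination and the lower bound on $r'$.} Since $r_{j+1}\le r_j/K_2$ and $r\le1$, the iteration stops after at most $J\lesssim\log K_3/\log K_2\sim\epsilon^{-2}=O_\epsilon(1)$ steps. At the last step we have $r'=r_J\lesssim K_3^{-1}$. Moreover, each application of Proposition~\ref{prop NB} gives $r_{j+1}\gtrsim K_1R^{-1/2}$, so $K_1R^{-1/2}\lesssim r'$. The whole family of final rectangles is contained in $\bigcup\omega$, because each stage is nested in the union of the previous one.

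\textbf{Multiplying the constants.} The losses telescope:
\[
\prod_j\Bigl(\frac{r_j}{r_{j+1}}\Bigr)^{100\frac{\log K_1}{\log K_2}}=\Bigl(\frac{r}{r'}\Bigr)^{100\frac{\log K_1}{\log K_2}}.
\]
The implicit constants $C_\epsilon^J$ are still $O_\epsilon(1)$ because $J=O_\epsilon(1)$. If the process terminates through alternative (2) at some step $j$, we use
\[
\sum_\theta\|f_\theta\|^2_{L^4(Q)}\quad\text{over }\theta\subset\textstyle\bigcup\omega^{(j)}\subset\bigcup\omega,
\]
and the prefactor $(r/r_j)^{100\log K_1/\log K_2}\le R^{O(\epsilon^2)}$ is absorbed into $R^{\epsilon}$ after renaming $\epsilon$. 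In either case exactly one of the two terms appears, so their sum bounds the left-hand side.

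\textbf{Main obstacle.} The delicate point is the bookkeeping of the exponent and of the term $R^\epsilon$ across iterations. One must check that the factor $(r/r_j)^{100\log K_1/\log K_2}=R^{O(\epsilon^2)}$ does not spoil the $R^\epsilon$ term. This requires that the number of iterations is bounded independently of $R$, which follows from the scale drop by a factor $K_2$ at each step. One must also check that the disjointness and generic-position properties persist, which Proposition~\ref{prop NB} guarantees at each step.
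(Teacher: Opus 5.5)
Your proposal is correct and follows the paper's proof: iterate Proposition~\ref{prop NB} until the scale drops below $K_3^{-1}$, taking $r'=r$ if $r\le K_3^{-1}$ already. You also supply the bookkeeping the paper leaves implicit: the telescoping of the factors $(r_j/r_{j+1})^{100\log K_1/\log K_2}$, the $O(\epsilon^{-2})$ bound on the number of steps, and the absorption of the prefactor $R^{O(\epsilon^2)}$ into $R^{\epsilon}$ when alternative (2) occurs.

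One small correction: generic position by itself gives no overlap bound, so you cannot pigeonhole into $O(1)$ disjoint subfamilies. Instead, take pairwise disjointness as part of the hypothesis, as it is in every application (e.g.\ Proposition~\ref{prop compute}) and as Proposition~\ref{prop NB} requires.
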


\begin{proof}
    If $r \leq K_{3}^{-1}$, we take $r' = r$ and $\{\omega'\} = \{\omega\}$. Otherwise, we repeatedly apply Proposition \ref{prop NB} until the scale $r'$ becomes smaller than $\frac{1}{K_{3}}$.
\end{proof}

\begin{remark}\label{remark}
    In the process of applying Proposition \ref{prop downscale}, each step reduces the scale by a factor of $K_{2}$, and the number of such choices is at most $O_{\epsilon}(1)$. Moreover, in the application of Proposition \ref{prop NB}, there are $O_{\epsilon}(1)$ choices for reducing the scale by $K_{1}$. Consequently, the total number of possible choices is at most $O_{\epsilon}(1)^{O_{\epsilon}(1)}=O_{\epsilon}(1)$.
\end{remark}

We now return to the original pair $(\Omega_1,\Omega_2)$ and apply the preceding multiscale analysis at the top frequency scale.

\begin{prop}\label{prop bigscale}
 Let $(\Omega_{1},\Omega_{2})$ be $\frac{1}{K_{2}}$-good pair. Fix some $R^{\frac{1}{2}}$-balls $Q$. Then one of the following is true:
 \begin{enumerate}
     \item there is $r\gtrsim K_{1}R^{-\frac{1}{2}}$ and a family of pairwise disjoint $(\frac{r}{K_{2}},r)$-rectangles $\omega_{i}\subset \Omega_{i}$, in generic position, such that
     \begin{equation}\label{eq 3.131}
         \int_{Q}|f_{\Omega_{1}}f_{\Omega_{2}}|^{2}\lesssim K_{1}^{O(1)}\sum_{\omega_{1}\subset \Omega_{1}}\|g_{\omega_{1}}\|_{L^{4}(Q)}^{2}\sum_{\omega_{2}\subset \Omega_{2}}\|g_{\omega_{2}}\|_{L^{4}(Q)}^{2}.
     \end{equation}
     \item We have
     \begin{equation}\label{eq 3.132}
         \int_{Q}|f_{\Omega_{1}}f_{\Omega_{2}}|^{2}\lesssim K_{2}^{O(1)}\int_{Q}\sum_{\theta_{1}\subset \Omega_{1}}|f_{\theta_{1}}|^{2}\sum_{\theta_2\subset \Omega_{2}}|f_{\theta_{2}}|^{2},
     \end{equation}
     where $\{\theta_{1}\},\{\theta_{2}\}$ are collection of squares with scale $R^{-\frac{1}{2}}$.
 \end{enumerate}    
\end{prop}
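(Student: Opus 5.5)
The plan is to run the narrow/broad iteration of Lemma~\ref{lem dichotomy} starting from the top scale $r=\frac{1}{K_2}$. I view $\Omega_1,\Omega_2$ as $\frac{1}{K_2}$-squares at distance $d\sim 1$. Since $(\Omega_1,\Omega_2)$ is a $\frac1{K_2}$-good pair, every line joining them is good, and $d\gtrsim 1\ge K_0\cdot\frac{1}{K_2}$. Hence the pair is in generic position. Also $d r R^{1/2}\sim R^{1/2}/K_2\ge K_0$, so Lemma~\ref{lem dichotomy} and Definition~\ref{def NB} apply. As in the proof of Lemma~\ref{lemma NB}, I would pigeonhole (following \cite{DW}) so that at each generation all subpairs share the same type, narrow or broad.

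\textbf{Iteration.} While the current pairs are narrow, I replace $\int_Q|f_{\sigma_1}f_{\sigma_2}|^2$ by $C\int_Q\sum_{\beta_1}|f_{\beta_1}|^2\sum_{\beta_2}|f_{\beta_2}|^2$, where $\beta_i$ are the $K_1^{-1}$-subsquares. Each subpair $(\beta_1,\beta_2)$ inherits generic position, because subsets of a good pair stay good and the distance stays $\sim 1$. The hypothesis $d r R^{1/2}\ge K_0$ continues to hold as long as $r\gtrsim K_0 R^{-1/2}$. After $m$ narrow steps the constant is $C^m$. The number of steps satisfies $m\lesssim \log R/\log K_1=O(\epsilon^{-6})$, so $C^m\lesssim_\epsilon 1$.

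\textbf{Case (2): the narrow steps reach the bottom.} Suppose the narrow steps continue until $r_1\lesssim K_0K_1R^{-1/2}$. I then finish with a trivial step: split each $r_1$-square into $R^{-1/2}$-squares $\theta$ and apply Cauchy--Schwarz, $|f_{\sigma}|^2\le (r_1R^{1/2})^2\sum_{\theta\subset\sigma}|f_\theta|^2$. This costs $K_2^{O(1)}$ and gives \eqref{eq 3.132}.

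\textbf{Case (1): a broad step occurs.} Suppose the iteration stops at a broad pair at scale $r_1\gtrsim K_1R^{-1/2}$. The second term of \eqref{eq dich} then dominates, so
\[\int_Q|f_{\sigma_1}f_{\sigma_2}|^2\lesssim K_1^{O(1)}\sum_{\omega_1\subset\sigma_1}\sum_{\omega_2\subset\sigma_2}\int_Q|g_{\omega_1}g_{\omega_2}|^2,\]
with $(\frac{r_1}{K_0},r_1)$-rectangles in generic position. Here Lemma~\ref{lem stability} keeps the rectangle directions good, as in Lemma~\ref{lem equal}. Summing over $(\sigma_1,\sigma_2)$, I bound $\int_Q|g_{\omega_1}g_{\omega_2}|^2\le\|g_{\omega_1}\|_{L^4(Q)}^2\|g_{\omega_2}\|_{L^4(Q)}^2$ by Cauchy--Schwarz. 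This lets the double sum factor into a product of single sums over $\omega_1\subset\Omega_1$ and over $\omega_2\subset\Omega_2$.

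\textbf{Main obstacle: uniform orientation.} To reach the form \eqref{eq 3.131}, I must convert the $(\frac{r_1}{K_0},r_1)$-rectangles into pairwise disjoint $(\frac{r_1}{K_2},r_1)$-rectangles with a common family structure. I expect this to be the main difficulty, since the orientations depend on the pair $(\sigma_1,\sigma_2)$. I would handle it as in Subcase~2 of Lemma~\ref{lemma NB}. The orientations vary by $O(K_1/K_2)$ across pairs, which can be rearranged at a $K_1^{O(1)}$ cost. Then I apply the rectangle-conversion argument from the proof of Proposition~\ref{prop NB}: refine the short side, reassign the non-adjacent pairs, and pigeonhole a direction subfamily, at a further $K_1^{O(1)}$ cost. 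Finally I would check that these rectangles remain good strips, again by Lemma~\ref{lem stability}.
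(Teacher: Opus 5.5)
Your proposal is correct, but it differs from the paper at the step you flag as the main obstacle.

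The paper observes that, because $d\sim 1$, every pair of $r$-subsquares of $\Omega_1,\Omega_2$ with $K_2R^{-1/2}\lesssim r\le 1/K_2$ satisfies the stronger conditions $d\ge K_2 r$ and $drR^{1/2}\ge K_2$. It therefore runs Lemma~\ref{lem dichotomy} throughout with $K_0$ replaced by $K_2$. The broad alternative then outputs $(\frac{r}{K_2},r)$-rectangles directly, with $(\frac{r}{K_2},\frac{r}{K_1})$-pieces inside $g_\omega$. The narrow iteration bottoms out at $r\sim K_2R^{-1/2}$, which is your threshold $K_0K_1R^{-1/2}$, and Cauchy--Schwarz there costs $K_2^{O(1)}$.

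You instead keep $K_0$, obtain $(\frac{r_1}{K_0},r_1)$-rectangles, and then import the conversion from Proposition~\ref{prop NB}: refine the short sides, reassign non-adjacent pairs, and pigeonhole a direction class, at a $K_1^{O(1)}$ cost. Both routes give \eqref{eq 3.131} and \eqref{eq 3.132} with the stated losses.

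The paper's route is shorter and makes the orientation issue nearly trivial. Over $\Omega_1\times\Omega_2$ the projected tangent-plane intersection direction varies only by $O(1/K_2)$. This matches the eccentricity of $(\frac{r}{K_2},r)$-rectangles, so uniformizing orientations costs $O(1)$. Your $O(K_1/K_2)$ bound is valid but crude here; it is the bound relevant inside Lemma~\ref{lemma NB}, not at this top scale.

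Your route avoids re-checking Lemma~\ref{lem equal} and Lemma~\ref{lem dichotomy} with a different eccentricity parameter. The price is reusing the heavier rectangle-conversion machinery.
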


\begin{proof}
 For $\Omega_{1},\Omega_{2}$, we may take $r=\frac{1}{K_{2}}$. We verify that $d\sim 1\ge {K_{2}}r$ and $d{R^{\frac{1}{2}}}r\geq K_{2}$. Thus, the pair $(\Omega_{1},\Omega_{2})$ is broad or narrow. 
  
 If the pair $(\Omega_{1},\Omega_{2})$ is broad, we have the following inequality by H\"{o}lder inequality and Minkowski inequality
 \begin{equation}
 \begin{aligned}
      \int_{Q}|f_{\Omega_{1}}f_{\Omega_{2}}|^{2}&\lesssim K_{1}^{O(1)}\int_{Q} \sum_{\omega_{1}\subset \Omega_{1}}|g_{\omega_{1}}|^{2}\sum_{\omega_{2}\subset \Omega_{2}}|g_{\omega_{2}}|^{2}\\
      & \lesssim K_{1}^{O(1)}\Big(\int_{Q}(\sum_{\omega_{1}\subset \Omega_{1}}|g_{\omega_{1}}|^{2})^{2}\Big)^{\frac{1}{2}}\Big(\int_{Q}(\sum_{\omega_{2}\subset \Omega_{2}}|g_{\omega_{2}}|^{2})^{2}\Big)^{\frac{1}{2}}\\
      &\lesssim K_{1}^{O(1)}\sum_{\omega_{1}\subset \Omega_{1}}\|g_{\omega_{1}}\|_{L^{4}(Q)}^{2}\sum_{\omega_{2}\subset \Omega_{2}}\|g_{\omega_{2}}\|_{L^{4}(Q)}^{2},
 \end{aligned}
 \end{equation}
where $\omega_{i}$ are $(\frac{1}{K_{2}^{2}},\frac{1}{K_{2}})$-rectangles in generic position. Note that here we apply Lemma~\ref{lem dichotomy} with $K_{0}$ replaced by $K_{2}$ since $d\gtrsim K_{2}r$ and $dR^{\frac{1}{2}}r\geq K_{2}$.
Thus, we obtain the inequality \eqref{eq 3.131}.

 One can verify that the broad-narrow dichotomy will continue until the scale of squares $r \sim K_{2} \frac{1}{R^{\frac{1}{2}}}$. Let the pair $(\Omega_{1},\Omega_{2})$ be narrow. We assume the narrow case will continue at $m$ steps until occurs broad case, where $m\ge 1$. Then we obtain
\begin{equation}
    \int_{Q}|f_{\Omega_{1}}f_{\Omega_{2}}|^{2}\lesssim \sum_{\sigma_{1}\subset \Omega_{1}}\sum_{\sigma_{2}\subset \Omega_{2}}\int_{Q}|f_{\sigma_{1}}f_{\sigma_{2}}|^{2},
\end{equation}
where $\sigma_{i}$ are $\frac{1}{K_{2}K_{1}^{m}}$-squares. Let $r=\frac{1}{K_{2}K_{1}^{m}}$.
 If $r \sim \frac{K_{2}}{R^{\frac{1}{2}}}$, we apply the triangle inequality to obtain \eqref{eq 3.132}. If $r \gtrsim K_{2} \frac{1}{R^{\frac{1}{2}}}$, meaning that the broad case occurs before $r$ reaches the bottom scale, we repeat the previous computations to obtain \eqref{eq 3.131}.
\end{proof}

The next proposition upgrades the local estimate on a single $R^{\frac{1}{2}}$-ball to a global estimate over a collection of such balls, while keeping track of the wave packet multiplicity.

\begin{prop}\label{prop compute}
    Let $X$ be a family of $R^{\frac{1}{2}}$-balls $Q$.  
    Let $\Omega \subset [-1,1]^{2}$ be a square, and suppose $f = \sum_{T \in \mathbb{T}} f_{T}$ is a sum of wave packets with $\operatorname{supp}(\hat f) \subset \mathcal{N}_{\frac{1}{R}}(\mathcal{S}_{\Omega})$.  
    Assume there exists $M \ge 1$ such that each $Q \in X$ meets at most $M$ of the $R$-tubes in $\mathbb{T}$.  
    Let $r \le 1$, and consider a collection of pairwise disjoint $\bigl(\frac{r}{K_{2}}, r\bigr)$-rectangles $\omega \subset \Omega$ in generic position.  
    Then
    \begin{equation}\label{eq long}
         \begin{aligned}
      \sum_{Q \subset X} \Bigl( \sum_{\omega} \|g_{\omega}\|_{L^4(Q)}^2 \Bigr)^2
      &\lesssim \Bigl( K_{1}^{O(1)} (\log R)^{O(1)}
        \sup_{r' \lesssim K_{3}^{-1}} \bigl(\tfrac{r}{r'}\bigr)^{200 \frac{\log K_{1}}{\log K_{2}}}
        C\bigl(R (r')^{2} / K_{1}^{2}\bigr) \\
      &\qquad + R^{\varepsilon} \Bigr) M \sum_{T \in \mathbb{T}} \|f_{T}\|_{4}^4 .
    \end{aligned}
    \end{equation}
\end{prop}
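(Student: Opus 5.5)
The plan is to follow the Demeter--Wu scheme: first reduce the sum over $\omega$ to a bounded number of uniform scales with Proposition~\ref{prop downscale}, then use Proposition~\ref{prop decouplingconstant} on each rectangle at the final scale, summing the resulting multiplicity-weighted bounds over $Q\subset X$.

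\textbf{Step 1 (local reduction).} Fix $Q\subset X$ and apply Proposition~\ref{prop downscale} to $\{\omega\}$. This gives a scale $K_1R^{-1/2}\lesssim r'\lesssim K_3^{-1}$ and disjoint $(\frac{r'}{K_2},r')$-rectangles $\omega'$ in generic position with
\[
\sum_{\omega}\|g_\omega\|_{L^4(Q)}^2\lesssim_\epsilon \Bigl(\tfrac{r}{r'}\Bigr)^{100\frac{\log K_1}{\log K_2}}\sum_{\omega'}\|g_{\omega'}\|_{L^4(Q)}^2+R^{\epsilon}\sum_{\theta}\|f_\theta\|_{L^4(Q)}^2 .
\]
By the remark following Proposition~\ref{prop NB} and Remark~\ref{remark}, the family $\{\omega'\}$ takes one of only $O_\epsilon(1)$ types, each determined by the sequence of scale choices. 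Pigeonhole the balls $Q\in X$ by type, at an $O_\epsilon(1)$ loss. From then on, $r'$ and $\{\omega'\}$ are fixed across the chosen subcollection $X'$. Squaring and using $(a+b)^2\le 2a^2+2b^2$ leaves two sums over $Q$ to bound.

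\textbf{Step 2 (the $\theta$ term).} Use $\sum_\theta\|f_\theta\|_{L^4(Q)}^2\le (\#\theta\text{ active on }Q)^{1/2}\bigl(\sum_\theta\|f_\theta\|_{L^4(Q)}^4\bigr)^{1/2}$ from Cauchy--Schwarz. On $Q$, $f_\theta$ is essentially the sum of the wave packets $T\ni Q$ with direction $\theta$. So the number of active $\theta$ is at most $M$, and $\|f_\theta\|^4_{L^4(Q)}\lesssim\sum_{T\cap Q\ne\emptyset}\|f_T\|^4_{L^4(Q)}$ by local orthogonality. Summing over $Q$ then gives $R^{2\epsilon}M\sum_T\|f_T\|_4^4$.

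\textbf{Step 3 (the main term).} Write $\|g_{\omega'}\|_{L^4(Q)}^2=\|g_{\omega'}^2\|_{L^2(Q)}$ and $g_{\omega'}^2=\sum_{s\not\sim s'}|f_sf_{s'}|$, with $s,s'$ the $(\frac{r'}{K_2},\frac{r'}{K_1})$-rectangles in $\omega'$; there are $K_1^{O(1)}$ such pairs. This bounds $(\sum_{\omega'}\|g_{\omega'}\|^2_{L^4(Q)})^2$ by $K_1^{O(1)}N_Q\sum_{\omega'}\sum_{s\not\sim s'}\int_Q|f_sf_{s'}|^2$, where $N_Q$ is the number of $\omega'$ with nonzero contribution on $Q$. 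Next dyadically pigeonhole $Q$, at a $(\log R)^{O(1)}$ loss, according to $N_Q\sim N$ and to the multiplicities $M_s\sim\mu$, $M_{s'}\sim\mu'$ of tubes from $\mathbb{T}(s)$ and $\mathbb{T}(s')$ meeting $Q$. Because $\omega'$ is a good strip, Proposition~\ref{prop decouplingconstant} applies on each resulting subcollection and gives
\[
\int|f_sf_{s'}|^2\lesssim C\bigl(R(r')^2/K_1^2\bigr)(\mu\mu')^{1/2}\Bigl(\sum_{T\in\mathbb T(s)}\|f_T\|_4^4\Bigr)^{1/2}\Bigl(\sum_{T\in\mathbb T(s')}\|f_T\|_4^4\Bigr)^{1/2}.
\]
Apply AM--GM to the product. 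Summing over $s$, $s'$ and $\omega'$ uses disjointness of the $\omega'$, so each tube is counted $K_1^{O(1)}$ times.

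\textbf{Main obstacle: the multiplicity bookkeeping.} The final bound must carry $M$, not $N\mu^{1/2}\mu'^{1/2}$, and the key input is that $N\mu\lesssim M$. Each active $\omega'$ on $Q$ contributes at least $\mu$ distinct tubes of $\mathbb T$ through $Q$, and tubes belonging to different $\omega'$ are distinct, so this bound holds. To get it in the right form I would first reduce by pigeonholing to the case $\mu\sim\mu'$, and if that is not possible, use $N(\mu\mu')^{1/2}\le N\max(\mu,\mu')\lesssim M$. With $N\mu\lesssim M$ in hand, the factor $(r/r')^{200\log K_1/\log K_2}$ comes from squaring the factor in Step 1, and taking the sup over $r'$ absorbs the pigeonholed choice of scale. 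This yields \eqref{eq long}.
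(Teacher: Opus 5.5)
Your overall route is the paper's. You apply Proposition~\ref{prop downscale} on each $Q$, pigeonhole the $O_\epsilon(1)$ types of $\{\omega'\}$, and bound the $\theta$-term by $M\sum_T\|f_T\|_4^4$. For the main term you use Cauchy--Schwarz over the active $\omega'$ followed by Proposition~\ref{prop decouplingconstant}, and close with (number of active $\omega'$)$\times$(multiplicity)$\lesssim M$.

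The gap is in the order of operations in Step~3, which is exactly where you placed the difficulty. You apply Cauchy--Schwarz with $N_Q$ equal to the number of \emph{all} $\omega'$ with nonzero contribution on $Q$, and only afterwards pigeonhole the multiplicities. With that definition, $N\mu\lesssim M$ is false. For example, $Q$ may meet $M/4$ rectangles $\omega'$ carrying two tubes each plus one $\omega'$ carrying $M/2$ tubes. If the latter dominates, you are forced to take $\mu\sim M$ while $N_Q\sim M$, and you lose a full factor of $M$. Your justification (each active $\omega'$ carries at least $\mu$ tubes) holds only if ``active'' means ``in the dominant multiplicity class.'' Also, ``pigeonhole $Q$ according to $M_s\sim\mu$'' is not well defined, since on a fixed $Q$ the multiplicity $M_s$ varies with $s$.

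The fix, which is what the paper does, is to pigeonhole before applying Cauchy--Schwarz:
\begin{enumerate}
\item For each $Q$, split $\sum_{\omega'}\|g_{\omega'}\|_{L^4(Q)}^2$ by the dyadic value of $\mu_{\omega'}(Q):=\#\{T\in\mathbb{T}(\omega'):T\cap Q\neq\emptyset\}$, and keep the dominant class at a $\log R$ loss.
\item Let $N$ be the number of $\omega'$ in that class. Then $N\mu\le M$, because the families $\mathbb{T}(\omega')$ are disjoint.
\item Only now apply Cauchy--Schwarz, and pigeonhole $Q$ by $(N,\mu)$.
\end{enumerate}
Since $\mathbb{T}(s),\mathbb{T}(s')\subset\mathbb{T}(\omega')$, Proposition~\ref{prop decouplingconstant} on $\bigcup_{Q\sim\omega'}Q$ applies with both multiplicities $\lesssim\mu$. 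So your $\mu$ versus $\mu'$ discussion disappears, and $N\mu\lesssim M$ closes the estimate directly.

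Alternatively, you can avoid pigeonholing $N$ altogether with the weighted inequality (summing only over $\omega'$ with $\mu_{\omega'}(Q)\ge 1$; the others are negligible on $Q$)
\[
\Bigl(\sum_{\omega'}\|g_{\omega'}\|_{L^4(Q)}^2\Bigr)^2\le \Bigl(\sum_{\omega'}\mu_{\omega'}(Q)\Bigr)\sum_{\omega'}\frac{\|g_{\omega'}\|_{L^4(Q)}^4}{\mu_{\omega'}(Q)}\le M\sum_{\omega'}\frac{\|g_{\omega'}\|_{L^4(Q)}^4}{\mu_{\omega'}(Q)},
\]
followed by dyadic pigeonholing of $\mu_{\omega'}(Q)$ for each $\omega'$.

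The rest of your proposal is fine: the $\theta$-term, the AM--GM summation over $s,s'$ with $K_1^{O(1)}$ overcounting, and squaring the $(r/r')$ factor.
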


\begin{proof}
Applying Proposition \ref{prop downscale} to each $Q$, we obtain 
\begin{equation}\label{eq 1}
    \sum_{\omega}\|g_{\omega}\|_{L^{4}(Q)}^{2}\lesssim_{\epsilon}\sum_{\omega'}\Bigl(\frac{r}{r'}\Bigr)^{100\frac{\log K_{1}}{\log K_{2}}}\|g_{\omega'}\|_{L^{4}(Q)}^{2}+R^{\epsilon}\sum_{\theta\subset \Omega} \|f_{\theta}\|_{L^{4}(Q)}^{2},
\end{equation}
where $K_{1}R^{-\frac{1}{2}}\lesssim r'\lesssim K_{3}^{-1}$ and the $\omega'$ are pairwise disjoint $(\frac{r'}{K_{2}},r')$-rectangles in $\Omega$ in generic position. 

For the second term in the above inequality, since the $\{\theta\}$ are $R^{-\frac{1}{2}}$-squares, we have
\begin{equation}\label{eq 2}
    \sum_{Q\subset X}(\sum_{\theta\subset \Omega} \|f_{\theta}\|_{L^{4}(Q)}^{2})^{2}\lesssim M\sum_{T\in \mathbb{T}}\|f_{T}\|_{4}^{4}.
\end{equation}

We now deal with the first term. By pigeonholing, we may assume that each $Q$ has the same collection $\{\omega'\}$ via Remark \ref{remark}. By another pigeonholing, we may also assume that, for some fixed $N$, each $Q$ receives a $(\log R)^{-O(1)}$-fraction of the contribution to the integral from $\sim M/N$ tubes, from each of $\sim N$ rectangles $\omega'$. For such a pair, we write $Q\sim\omega'$. Then 
\begin{align*}
\sum_{Q}\;\Bigl(\sum_{\omega'}\|g_{\omega'}\|_{L^4(Q)}^2\Bigr)^2&\lesssim (\log R)^{O(1)} N\sum_{Q}\sum_{\omega'\sim Q}\|g_{\omega'}\|_{L^4(Q)}^4\\
&=(\log R)^{O(1)} N\sum_{\omega'}\|g_{\omega'}\|_{L^4(\cup_{Q\sim\omega'}Q)}^4.
\end{align*}
Recall that $g_{\omega'}=(\sum_{s\not\sim s'\subset \omega'}|f_{s}f_{s'}|)^{1/2}$, then we compute
$$\|g_{\omega'}\|_{L^{4}(\bigcup_{Q\sim \omega'}Q)}^{4}=\int_{\bigcup_{Q\sim \omega'}Q} \Bigl(\sum_{s\not\sim s'\subset \omega'}|f_{s}f_{s'}|\Bigr)^{2}\lesssim K_{1}^{O(1)}\int_{\bigcup_{Q\sim \omega'}Q}|f_{s}f_{s'}|^{2}.$$
By Proposition \ref{prop decouplingconstant},
\begin{equation}\label{eq 3}
    \|g_{\omega'}\|_{L^{4}(\bigcup_{Q\sim \omega'}Q)}^{4} \lesssim K_{1}^{O(1)} C\bigl(R(r')^{2}/K_{1}^{2}\bigr) \frac{M}{N} \sum_{T\in \mathbb{T}_{\omega'}}\|f_{T}\|_{4}^{4}.
\end{equation}
Combining inequalities \eqref{eq 1}, \eqref{eq 2} and \eqref{eq 3}, we obtain the inequality \eqref{eq long}.

\end{proof}

Combining the previous two propositions yields the desired recursive relation for the refined decoupling constant.

\begin{theorem}
    \begin{equation}\label{eq relation}
        C(R)\lesssim_{\epsilon}(K_{1}K_{2})^{O(1)}\Big( (\log R)^{O(1)}\sup_{r' \lesssim K_{3}^{-1}} \bigl(\tfrac{1}{r'}\bigr)^{200 \frac{\log K_{1}}{\log K_{2}}}
        C\bigl(R (r')^{2} / K_{1}^{2}\bigr) +R^{\epsilon} \Big).
    \end{equation}
\end{theorem}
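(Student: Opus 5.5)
To prove \eqref{eq relation}, I would fix an arbitrary admissible configuration from Definition~\ref{def refineddecoupling}: a good pair $(\tau_1,\tau_2)$, the set $X$, and wave packets with multiplicities $M_1,M_2$. The goal is to bound $\int_X|f_1f_2|^2$ by the right-hand side times $(M_1M_2)^{1/2}\prod_j(\sum_{T\in\mathbb T_j}\|f_T\|_4^4)^{1/2}$.

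\emph{Step 1: reduction to $\frac1{K_2}$-squares.} Partition $\tau_1,\tau_2$ into $\frac1{K_2}$-squares $\Omega_1,\Omega_2$. By the triangle inequality and Cauchy--Schwarz, at a cost of $K_2^{O(1)}$, it suffices to treat one pair $(\Omega_1,\Omega_2)$. This pair is again a $\frac1{K_2}$-good pair by the Remark following Lemma~\ref{lem stability}. It has $d\sim1$, so it satisfies the hypotheses of Proposition~\ref{prop bigscale}.

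\emph{Step 2: local dichotomy.} Write $\int_X=\sum_{Q\subset X}\int_Q$ and apply Proposition~\ref{prop bigscale} on each $Q$. Then pigeonhole, using Remark~\ref{remark} and the fact that the scale $r$ takes only $O_\epsilon(\log R)$ dyadic values, so that all $Q$ fall into the same alternative with the same $r$ and the same rectangle families. This costs $(\log R)^{O(1)}$.

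\emph{Step 3a: narrow alternative \eqref{eq 3.132}.} Sum over $Q$ and apply Cauchy--Schwarz in $Q$:
\[
\sum_Q\int_Q\sum_{\theta_1}|f_{\theta_1}|^2\sum_{\theta_2}|f_{\theta_2}|^2\le\prod_{j}\Big(\sum_Q\int_Q\big(\sum_{\theta_j}|f_{\theta_j}|^2\big)^2\Big)^{1/2}.
\]
On each $Q$, the function $f_{\theta_j}$ is essentially the sum of the wave packets through $Q$. There are at most $M_j$ of them, so Cauchy--Schwarz over these tubes gives $\int_Q(\sum_{\theta_j}|f_{\theta_j}|^2)^2\lesssim M_j\sum_{T\cap Q\neq\emptyset}\|f_T\|_{L^4(Q)}^4$. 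Summing over $Q$ yields $M_j\sum_T\|f_T\|_4^4$. This produces the $R^\epsilon$ term (in fact a $K_2^{O(1)}$ term).

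\emph{Step 3b: broad alternative \eqref{eq 3.131}.} Apply Cauchy--Schwarz over $Q$:
\[
\sum_Q\prod_j\Big(\sum_{\omega_j}\|g_{\omega_j}\|_{L^4(Q)}^2\Big)\le\prod_j\Big(\sum_Q\Big(\sum_{\omega_j}\|g_{\omega_j}\|_{L^4(Q)}^2\Big)^2\Big)^{1/2}.
\]
Each factor is then bounded by Proposition~\ref{prop compute}, with $\Omega=\Omega_j$ and $M=M_j$. Since $r\le1$, the factor $(r/r')^{200\log K_1/\log K_2}$ is at most $(1/r')^{200\log K_1/\log K_2}$. Taking square roots and multiplying the two factors gives the bound
\[
\Big(K_1^{O(1)}(\log R)^{O(1)}\sup_{r'}(1/r')^{200\frac{\log K_1}{\log K_2}}C(R r'^2/K_1^2)+R^\epsilon\Big)(M_1M_2)^{1/2}\prod_j\Big(\sum_T\|f_T\|_4^4\Big)^{1/2}.
\]
The geometric mean of two copies of the same bracket is just that bracket.

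\emph{Step 4: conclusion.} Collect the $K_1^{O(1)}$, $K_2^{O(1)}$ and $(\log R)^{O(1)}$ losses and take the supremum over configurations to obtain \eqref{eq relation}.

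The main obstacle is the pigeonholing in Step 2. The rectangle families produced by Propositions~\ref{prop bigscale} and \ref{prop downscale} depend on $Q$, and Proposition~\ref{prop compute} requires a fixed family with uniform multiplicity. I would handle this using Remark~\ref{remark}, which shows the partitions are determined by $O_\epsilon(1)$ scale choices, and absorb the resulting $O_\epsilon(1)$ loss into the implied constant.
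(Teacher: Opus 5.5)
Your proposal is correct and follows the paper's proof essentially step for step. You reduce to a pair of $\frac1{K_2}$-squares at a $K_2^{O(1)}$ cost, apply Proposition~\ref{prop bigscale} on each $Q$, and then in the broad alternative sum over $Q$, apply Cauchy--Schwarz and Proposition~\ref{prop compute} with $r\le 1$; in the narrow alternative you use the multiplicity bound \eqref{eq 2}. Your explicit pigeonholing of the balls $Q$ into a single alternative and scale just makes precise what the paper dismisses with ``we may assume the rectangle family does not depend on $Q$.''
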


\begin{proof}
Let $(\tau_{1},\tau_{2})$ be any good pair, and let $f_{1},f_{2},M_{1},M_{2},X$ be as in Definition \ref{def refineddecoupling}. 
We begin by decomposing each $\tau_i$ into $\frac{1}{K_2}$-squares $\Omega_i$. By the triangle inequality,
\begin{equation}
    \int_{X}|f_{1}f_{2}|^{2}\leq K_{2}^{O(1)}\sum_{\Omega_{1},\Omega_{2}}\int_{X}|f_{\Omega_{1}}f_{\Omega_{2}}|^{2}.
\end{equation}
We then fix $\Omega_{1},\Omega_{2}$ and apply Proposition \ref{prop bigscale} on each cube $Q\subset X$.

\textbf{Case 1:} Assume that \eqref{eq 3.131} holds for every $Q\subset X$. As in the previous argument, we may assume that the rectangle family $\{\omega_i\}$ does not depend on $Q$. 
Summing \eqref{eq 3.131} over all $Q\subset X$ and applying Cauchy--Schwarz, we get
\begin{equation}
    \int_{X}|f_{\Omega_{1}}f_{\Omega_{2}}|^{2}\lesssim K_{1}^{O(1)}
    \Big(\sum_{Q\subset X}\bigl(\sum_{\omega_{1}\subset \Omega_{1}}\|g_{\omega_{1}}\|_{L^{4}(Q)}^{2}\bigr)^{2} \Big)^{\frac{1}{2}}
    \Big(\sum_{Q\subset X}\bigl(\sum_{\omega_{2}\subset \Omega_{2}}\|g_{\omega_{2}}\|_{L^{4}(Q)}^{2}\bigr)^{2} \Big)^{\frac{1}{2}}.
\end{equation}
Next, we apply Proposition \ref{prop compute} to
\[
\Big(\sum_{Q\subset X}\bigl(\sum_{\omega_{1}\subset \Omega_{1}}\|g_{\omega_{1}}\|_{L^{4}(Q)}^{2}\bigr)^{2} \Big)^{\frac{1}{2}}
\quad\text{and}\quad
\Big(\sum_{Q\subset X}\bigl(\sum_{\omega_{2}\subset \Omega_{2}}\|g_{\omega_{2}}\|_{L^{4}(Q)}^{2}\bigr)^{2} \Big)^{\frac{1}{2}},
\]
which yields
\begin{equation}
    \begin{aligned}
        \int_{X}|f_{\Omega_{1}}f_{\Omega_{2}}|^{2}&\lesssim (K_{1}K_{2})^{O(1)}
        \Big( (\log R)^{O(1)}\sup_{r'\lesssim K_{3}^{-1}} \bigl(\tfrac{1}{r'}\bigr)^{200 \frac{\log K_{1}}{\log K_{2}}}
        C\bigl(R (r')^{2} / K_{1}^{2}\bigr) +R^{\epsilon} \Big)\\
        &\qquad \times (M_{1}M_{2})^{\frac{1}{2}}
        \Bigl(\sum_{T\in \mathbb{T}_{\Omega_{1}}}\|f_{T}\|_{4}^{4}\Bigr)^{\frac{1}{2}}
        \Bigl(\sum_{T\in \mathbb{T}_{\Omega_{2}}}\|f_{T}\|_{4}^{4}\Bigr)^{\frac{1}{2}}.
    \end{aligned}
\end{equation}

\textbf{Case 2:} Assume instead that \eqref{eq 3.132} holds for every $Q\subset X$. In this case, inequality \eqref{eq 2} immediately gives
\[
C(R)\lesssim R^{\epsilon}.
\]
This completes the proof.
\end{proof}

For the sake of completeness, we verify that the induction on scales closes at the final stage. Assume that
\[
C(R)\sim R^\nu .
\]
We claim that $\nu\le 2\epsilon$. Indeed, substituting $\nu=2\epsilon$ into \eqref{eq relation}, the first term on the right-hand side becomes
\[
R^\nu K_2^{O(1)}K_1^{O(1)}K_3^{-(2\nu-200\epsilon^2)}
=
R^{2\epsilon}K_2^{O(1)}K_1^{O(1)}K_3^{-(4\epsilon-200\epsilon^2)}.
\]
Since
\[
K_1=R^{\epsilon^6},\qquad K_2=R^{\epsilon^4},\qquad K_3=R^{\epsilon^2},
\]
we have
\[
K_2^{O(1)}K_1^{O(1)}K_3^{-(4\epsilon-200\epsilon^2)}
=
R^{O(\epsilon^4)}R^{O(\epsilon^6)}R^{-4\epsilon^3+200\epsilon^4},
\]
which is bounded by a negative power of $R$ for sufficiently small $\epsilon$. Hence the first term is negligible. Therefore the second term in \eqref{eq relation} dominates, and we obtain
\[
C(R)\lesssim_\epsilon (K_1K_2)^{O(1)}R^\epsilon.
\]
Using again the definitions of $K_1$ and $K_2$, we have
\[
(K_1K_2)^{O(1)}R^\epsilon
=
R^{O(\epsilon^4)}R^\epsilon
\le R^{2\epsilon}
\]
for sufficiently small $\epsilon$. Consequently,
\[
C(R)\lesssim R^{2\epsilon},
\]
which shows that $\nu\le 2\epsilon$. This closes the induction on scales.

{
 
\begin{remark}\label{rem:weaker-good-condition}
    In the proof of Theorem \ref{thm refineddecoupling}, the assumption that the two
    frequency squares form a good pair is used only through the propagation property of
    good lines, as in Proposition \ref{prop bad2}. More precisely, what
    is needed is not the stronger requirement that every line joining the two squares
    makes angle at least $\varepsilon_0^{1/2}$ with both coordinate axes. Rather, it is
    enough to know that, throughout the bilinear refined decoupling iteration, the
    following property persists: whenever two separated frequency pieces occur, the line
    joining them, and the lines through either piece parallel to the projection of the
    intersection line of the corresponding tangent planes, stay uniformly separated from
    the bad-line set.

    Therefore, the argument proving Theorem \ref{thm refineddecoupling} applies verbatim
    to any pair of frequency squares satisfying this propagation property, even if their
    connecting direction is close to one of the coordinate axes. The definition of good
    pairs is used as a convenient sufficient condition guaranteeing this property, but it
    is not essential for the proof itself.
\end{remark}

}

\section{Restriction estimates}

In this section, we will prove Theorem \ref{thm restriction2} by combining a broad estimate and an induction-on-scales argument for the narrow part.
We first set up the wave packet decomposition and the incidence input.
Then we establish the broad estimate.
Finally, we combine the broad and narrow contributions to conclude the proof.

 Let $\mathcal{S}$ be a surface of the form \eqref{eq reduction}, and let $E$ denote the extension operator associated with $\mathcal{S}$ as in \eqref{eq extensionoperator} (where we write $E$ for $E_\mathcal{S}$ when no confusion arises). 
 
\subsection{Wave packet decomposition and incidence geometry}
In this subsection, we introduce the wave packet decomposition and record several standard properties that will be used throughout the paper.

Given the $\varepsilon_{3}$ in Theorem \ref{thm restriction2}, we fix the small constant
\begin{equation}
\label{fiourf8ur8gut8gu8tu}
\varepsilon_{3}=\epsilon^{1000}.
\end{equation}
In frequency space, let $\Theta$ be a finitely overlapping cover of $[-1,1]^2$ by $R^{-1/2}$-balls $\theta$, and let $\{\phi_\theta\}_{\theta\in \Theta}$ be a smooth partition of unity such that
\[
\supp(\phi_\theta)\subset 2\theta
\qquad\text{and}\qquad
\sum_{\theta\in\Theta}\phi_\theta=1
\quad\text{on }[-1,1]^2.
\]
For $f:[-1,1]^2\to\mathbb{C}$, we continue to write
\[
f_\theta=f\phi_\theta.
\]

In physical space, let $\mathcal{V}$ be a finitely overlapping cover of $\mathbb{R}^2$ by $R^{1/2}$-balls, and let $\{\psi_v\}_{v\in\mathcal{V}}$ be a smooth partition of unity on $\mathbb{R}^2$ such that $\hat \psi_v$ is concentrated near $v$,
\[
\text{supp}(\hat\psi_v)\subset B_{R^{-1/2}}^2(0),
\qquad\text{and}\qquad
\sum_{v\in\mathcal{V}}\psi_v=1
\quad\text{on }\mathbb{R}^2.
\]

These partitions induce the wave packet decomposition of any function $f$ supported on $[-1,1]^2$:
\begin{equation}
\nonumber
f=\sum_{\theta\in \Theta}\sum_{v\in\mathcal{V}}(f\phi_\theta)\ast\hat\psi_v
=:\sum_{(\theta,v)\in\Theta\times\mathcal{V}}f_{\theta,v}.
\end{equation}
For $x\in\mathbb{R}^3$, write $x=(\bar x,x_3)$, and let $\Phi$ be the form of \eqref{eq reduction}.
For each $\theta\in\Theta$ and $v\in\mathcal{V}$, define
\[
T_{\theta,v}
=
\Bigl\{(\bar x,x_3)\in B_R:
|\bar x-c_v+x_3\nabla\Phi(c_\theta)|
\le R^{1/2+\varepsilon_3}\Bigr\},
\]
which is a tube of dimensions $R^{1/2+\varepsilon_3}\times R^{1/2+\varepsilon_3}\times R$, where $c_\theta$ and $c_v$ denote the centers of $\theta$ and $v$, respectively. We denote by $V(\theta)$ the vector
\[
V(\theta)=(1,\nabla\Phi(c_\theta)).
\]
Let
\[
\bar{\mathbb{T}} (\theta)
=
\{T_{\theta,v}:v\in\mathcal{V}\text{ and }T_{\theta,v}\cap B_R\not=\varnothing\}
\]
be the family of $R$-tubes with direction $V(\theta)$, and set
\[
\bar{\mathbb{T}}=\bigcup_\theta \bar{\mathbb{T}}(\theta).
\]
If $T=T_{\theta,v}$, we write
\begin{equation}
\label{jiojgiotgiutu8hu8y}
f_T=f_{\theta,v}, \;\theta=\theta_T.
\end{equation}

The next lemma summarizes the standard properties of this decomposition.

\begin{lemma}\label{lem property}
\label{wpt}
The wave packet decomposition satisfies the following properties.
\begin{enumerate}
\item $Ef=\sum_{T\in\bar{\mathbb{T}}}Ef_T$.
\item $ |Ef_T(x)|\lesssim R^{-1000}$ for all $x\in B_R\setminus T$.
\item $\supp f_T\subset 3\theta$ whenever $T$ has direction $V(\theta)$.
\item The family $\{V(\theta)\}_{\theta\in\Theta}$ is $\gtrsim R^{-1/2}$-separated.
\item For each $\theta\in\Theta$, the collection $\bar{\mathbb{T}}(\theta)$ is $R^{O(\varepsilon_3)}$-overlapping.
\item For every $T\in\bar{\mathbb{T}}$,
\[
\|Ef_T\|_{L^p(w_{B_R})}
\lesssim
R^{2(\frac{1}{p}-\frac{1}{2})}
\|Ef_T\|_{L^2(w_{B_R})},
\]
where $w_{B_R}$ is a weight satisfying $w_{B_R}\sim1$ on $B_R$ and decaying rapidly outside $B_R$.
\end{enumerate}
\end{lemma}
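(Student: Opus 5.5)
The six properties are standard, and I would verify them one at a time directly from the definitions, following the elliptic case (Guth's wave packet lemma). The only surface-specific inputs are that $\Phi$ is smooth with $\|\Phi\|_{C^N}\lesssim_N 1$ on $[-1,1]^2$, and that its gradient map is bi-Lipschitz. The latter is the Hessian bound $\det D^2\Phi=-1+O(\varepsilon_0)$ established in the proof of Proposition~\ref{prop bad1}.

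\textbf{Properties (1) and (3).} These are immediate.

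For (1), the identities $\sum_\theta\phi_\theta=1$ on $[-1,1]^2$ and $\sum_v\psi_v=1$ give $\sum_v \hat\psi_v=\delta_0$. Hence $\sum_{\theta,v} f_{\theta,v}=f$. Applying the linear operator $E$ gives the claim, with tubes not meeting $B_R$ contributing negligibly, as is absorbed by (2).

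For (3), $\supp(f\phi_\theta)\subset2\theta$ and $\supp\hat\psi_v\subset B_{R^{-1/2}}$. Hence the convolution is supported in $2\theta+B_{R^{-1/2}}\subset3\theta$.

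\textbf{Property (4).} The map $c_\theta\mapsto\nabla\Phi(c_\theta)$ is injective and bi-Lipschitz, with constants $1+O(\varepsilon_0)$, by the integrated Hessian argument of Proposition~\ref{prop bad1}. Therefore $|V(\theta)-V(\theta')|\gtrsim|c_\theta-c_{\theta'}|\gtrsim R^{-1/2}$ for distinct centers.

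\textbf{Property (5).} For fixed $\theta$, the tubes $T_{\theta,v}$ are translates of a single tube by $c_v$, where the $c_v$ form an $R^{1/2}$-net. Each tube has cross-section radius $R^{1/2+\varepsilon_3}$, so a point lies in $\lesssim R^{2\varepsilon_3}$ of them.

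\textbf{Property (2).} This is the main step, a nonstationary phase estimate. I would write
\[
Ef_{\theta,v}(\bar x,x_3)=\int e^{i(\bar x\cdot\zeta+x_3\Phi(\zeta))}(f\phi_\theta*\hat\psi_v)(\zeta)\,d\zeta ,
\]
express the convolution via $\psi_v$ in physical space, and Taylor expand $\Phi$ around $c_\theta$ on $3\theta$. For $|x_3|\le R$, the second-order remainder satisfies $x_3\,O(|\zeta-c_\theta|^2)=O(1)$. This remainder is harmless, since it is a smooth factor with bounded derivatives at scale $R^{-1/2}$. The phase gradient is then $\bar x+x_3\nabla\Phi(c_\theta)$ up to $O(R^{1/2})$. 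Meanwhile, the physical-space localization of $\psi_v$ to $R^{1/2}$-neighborhoods of $c_v$, with Schwartz tails, gives decay $(1+|\bar x-c_v+x_3\nabla\Phi(c_\theta)|/R^{1/2})^{-N}$. Outside $T$ this is at most $R^{-N\varepsilon_3}$. Taking $N\gg \varepsilon_3^{-1}$ yields $R^{-1000}\|f\|_{L^1}$, which is the stated bound (normalizing, say, $\|f\|_{L^2}\le1$ as is implicit). I expect this to be the only step needing care. The expected obstacle is controlling the Taylor remainder uniformly in $\theta$, which uses only $\|\Phi\|_{C^3}\lesssim1$ from \eqref{eq smallcoeff}.

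\textbf{Property (6).} By (2), $Ef_T$ is essentially supported in $T$, and there it is essentially constant in modulus at scale $R^{1/2}\times R^{1/2}\times R$. This holds because its Fourier support lies in the $R^{-1}$-neighborhood of the cap $\mathcal S_{3\theta}$, which is contained in an $R^{-1/2}\times R^{-1/2}\times R^{-1}$ slab. By Bernstein/locally-constant, $\|Ef_T\|_{L^\infty(w_{B_R})}\lesssim |T|^{-1/2}\|Ef_T\|_{L^2(w_{B_R})}$. Interpolating with the trivial bound over $|T|\sim R^{2}$ (up to $R^{O(\varepsilon_3)}$) gives $\|Ef_T\|_{L^p}\lesssim |T|^{1/p-1/2}\|Ef_T\|_{L^2}=R^{2(1/p-1/2)}\|Ef_T\|_{L^2}$. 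Any $R^{O(\varepsilon_3)}$ losses are acceptable under the paper's $\lessapprox$ conventions.
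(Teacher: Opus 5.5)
Your verification is correct, and it is the standard wave packet argument the paper relies on: the paper gives no proof of this lemma and simply records these as standard properties of the decomposition. Two small refinements: (4) tacitly uses that the centers $c_\theta$ are chosen $R^{-1/2}$-separated (e.g.\ on a lattice), which finite overlap of the cover alone does not force; and (6) holds with no $R^{O(\varepsilon_3)}$ loss, because the locally constant scale you identified is the dual box of volume $R^{2}$, not the enlarged tube $T$ (alternatively, $\|Ef_T\|_{L^\infty}\le\|f_T\|_{L^1}\lesssim R^{-1/2}\|f_T\|_{L^2}$ while $\|Ef_T\|_{L^2(w_{B_R})}\gtrsim R^{1/2}\|f_T\|_{L^2}$ for any tube meeting $B_R$).
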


Next, we recall the $L^{2}$ estimate for the extension operators, and we will use these $L^{2}$-estimates in the subsequent interpolation. The second estimate is a refined $L^{2}$ orthogonality, and the definition involving shading will be given later.

\begin{lemma}\label{lem TS}
    We have
    $$\|Ef\|_{L^{4}(\mathbb{R}^{3})}\lesssim \|f\|_{L^{2}([-1,1]^{2})}.$$
\end{lemma}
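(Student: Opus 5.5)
The estimate in Lemma~\ref{lem TS} is the classical Stein--Tomas inequality at the endpoint $p=4$ in $\mathbb{R}^3$. My plan is to run the standard $TT^*$ argument. The only surface-specific input is the decay of the Fourier transform of a smooth compactly supported measure on $\mathcal S$, and the curvature assumption gives this.

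\textbf{Step 1 (decay).} Let $d\sigma$ be the measure $\chi(\xi,\eta)\,d\xi\,d\eta$ pushed forward to $\mathcal S$, with $\chi$ a smooth cutoff equal to $1$ on $[-1,1]^2$. Then
\[
\widehat{d\sigma}(x,y,t)=\int e^{i(x\xi+y\eta+th(\xi,\eta))}\chi(\xi,\eta)\,d\xi\,d\eta .
\]
By \eqref{eq reduction}--\eqref{eq smallcoeff}, the Hessian of $h$ has determinant $-1+O(\varepsilon_0)$, uniformly on $\operatorname{supp}\chi$; this is the computation in the proof of Proposition~\ref{prop bad1}. The bound I want is
\[
|\widehat{d\sigma}(x,y,t)|\lesssim (1+|(x,y,t)|)^{-1},
\]
and I split into two regimes.
\begin{itemize}
\item When $|t|\ll |(x,y)|$, the phase has gradient $\gtrsim |(x,y)|$, and repeated integration by parts gives rapid decay.
\item When $|t|\gtrsim |(x,y)|$, the phase $t\bigl(h+(x\xi+y\eta)/t\bigr)$ has at most one nondegenerate critical point, because $\nabla h$ is injective (Proposition~\ref{prop bad1}) and the Hessian is nondegenerate. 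Stationary phase in two variables then gives $|t|^{-1}$.
\end{itemize}
The bounds are uniform because $h$ ranges over a compact family of polynomials with bounded coefficients.

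\textbf{Step 2 ($TT^*$).} $L^2\to L^4$ boundedness of $E$ is equivalent to $L^{4/3}\to L^4$ boundedness of $EE^*$, which is convolution with $\widehat{d\sigma}$ up to conjugation. Decompose $\widehat{d\sigma}=\sum_{k\ge0}K_k$ dyadically in physical space, with $K_k$ supported where $|x|\sim 2^k$. For each piece I will show the two endpoint bounds
\[
\|K_k*F\|_{L^\infty}\lesssim 2^{-k}\|F\|_{L^1}
\qquad\text{and}\qquad
\|K_k*F\|_{L^2}\lesssim 2^{k}\|F\|_{L^2}.
\]
The first follows from Step 1. The second follows because $\widehat{K_k}$ is $d\sigma$ smoothed at scale $2^{-k}$; since $\mathcal S$ is two-dimensional in $\mathbb{R}^3$, this gives $\|\widehat{K_k}\|_\infty\lesssim 2^{k}$. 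Riesz--Thorin interpolation at $p=4$ with $\theta=1/2$ gives $O(1)$ for each $k$, with no decay. So the dyadic sum cannot be closed by a geometric series, and I would not try to.

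\textbf{Step 3 (endpoint).} Instead I use the complex interpolation form of the argument, via Stein's analytic family $|x|^{z}$-weighted kernels. Equivalently, I view $\widehat{d\sigma}$ as a damped kernel $(1+|\cdot|)^{-1}$ and apply the Hardy--Littlewood--Sobolev inequality along the $t$-direction in the Strichartz-type $TT^*$ form. This is the step I expect to be the main obstacle.

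The cleanest route is to use the slicing structure. Write $Ef(\cdot,t)=e^{ith(D)}\check f$. By Step 1 and Plancherel, the dispersive estimate is $\|e^{ith(D)}\|_{L^1\to L^\infty}\lesssim |t|^{-1}$, and $L^2\to L^2$ boundedness is trivial. The Keel--Tao/Strichartz machinery in spatial dimension $2$ with decay rate $\sigma=1$ then yields $L^4_tL^4_x$, since $(q,r)=(4,4)$ is admissible and non-endpoint: $\frac{2}{4}+\frac{2}{4}=1$. This requires the dispersive bound $|t|^{-1}$ only for the frequency-localized propagator. That bound follows from the nondegenerate, possibly indefinite, Hessian. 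Indefiniteness does not matter for stationary phase, since only $|\det|$ enters.

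Compiling the two steps yields $\|Ef\|_{L^4(\mathbb{R}^3)}\lesssim\|f\|_{L^2}$.
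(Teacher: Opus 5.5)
Your argument is correct. Note that the paper gives no proof of Lemma~\ref{lem TS}: it is simply recalled as the classical endpoint Stein--Tomas inequality for surfaces with nonvanishing Gaussian curvature in $\mathbb R^3$. The standard proof of that inequality embeds $d\sigma$ in an analytic family of measures and uses Stein's complex interpolation, with $|\widehat{d\sigma}(z)|\lesssim(1+|z|)^{-1}$ as the only geometric input.

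Your route is different. You use that $\mathcal S$ is a graph and write $Ef(\cdot,t)=U(t)\check f$ with $U(t)=e^{ith(D)}\chi(D)$. You then read $L^4(\mathbb R^3)=L^4_tL^4_{x,y}$ as the non-endpoint Strichartz pair $(4,4)$ in two space dimensions. For this pair Keel--Tao is unnecessary:
\begin{itemize}
\item $U(t)U(s)^*=e^{i(t-s)h(D)}|\chi|^2(D)$ has kernel bounded by $|t-s|^{-1}$, by your Step~1;
\item interpolating with the $L^2$ bound gives $\|U(t)U(s)^*\|_{L^{4/3}\to L^4}\lesssim|t-s|^{-1/2}$;
\item the one-dimensional Hardy--Littlewood--Sobolev inequality $L^{4/3}_t\to L^4_t$ then closes the $TT^*$ argument.
\end{itemize}
This is the Ginibre--Velo proof of Strichartz's estimate. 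Compared with complex interpolation, it is more elementary and needs only $\sup_{x,y}|\widehat{d\sigma}(x,y,t)|\lesssim(1+|t|)^{-1}$. The price is that it uses the graph structure, which is available here, and the frequency cutoff $\chi$. You are right to keep $\chi$, since the Hessian of the polynomial $h$ is controlled only near $[-1,1]^2$.

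Two small points. First, your Step~2 and the opening of Step~3 (the loosely described ``$|x|^z$-weighted kernels'') are a dead end and can be deleted; the slicing argument alone is the proof. Second, in Step~1 the stationary-phase constant must be uniform in the parameter $(x,y)/t$, and uniqueness plus nondegeneracy of the critical point does not by itself give that. Uniformity follows from $\nabla h$ being bi-Lipschitz. The averaged Hessian $A$ in the proof of Proposition~\ref{prop bad1} has uniformly bounded inverse, so $|\nabla_\zeta\phi(\zeta)|\gtrsim|\zeta-\zeta_0|$ around a critical point $\zeta_0$, with constants depending only on $\varepsilon_0$.
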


\begin{lemma}\label{lem L2}
    Let $X$ be a union of $R^{\frac{1}{2}}$-balls, and let $f=\sum_{T\in \mathbb{T}}f_{T}$ be a sum of wave packets. Suppose for each $T\in \mathbb{T}$ there is a shading $Y(T)\subset T$ by $R^{\frac{1}{2}}$-balls in $X$ such that the number of $R^{\frac{1}{2}}$-balls intersecting $Y(T)$ is $\lesssim \lambda R^{\frac{1}{2}}$. Then
    $$\int_{X}|\sum_{T\in \mathbb{T}}Ef_{T}\mathbf{1}_{Y(T)}|^{2}\lesssim \lambda R\|f\|_{2}^{2}. $$
\end{lemma}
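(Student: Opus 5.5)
The approach is to expand the square and use the $L^2$ orthogonality of wave packets inside each $R^{1/2}$-ball, combined with the shading multiplicity bound $\lambda R^{1/2}$. The target is the bound $\int_X |\sum_T Ef_T \mathbf 1_{Y(T)}|^2 \lesssim \lambda R \|f\|_2^2$, which should be thought of as $\lambda R^{1/2}$ balls per tube, each ball contributing $R^{1/2}\|f_T\|_2^2$ of $L^2$ mass.

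\textbf{Step 1: reduce to a single ball.} Write $\int_X = \sum_{Q\subset X}\int_Q$. On a fixed $R^{1/2}$-ball $Q$, the indicator $\mathbf 1_{Y(T)}$ is either $1$ or $0$, since $Y(T)$ is a union of $R^{1/2}$-balls of $X$. Hence
\[
\int_Q\Big|\sum_T Ef_T\mathbf 1_{Y(T)}\Big|^2=\int_Q\Big|\sum_{T:\,Q\subset Y(T)} Ef_T\Big|^2 .
\]
So on each $Q$ we simply face the extension of $f_Q:=\sum_{T:\,Q\subset Y(T)}f_T$.

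\textbf{Step 2: local $L^2$ estimate on $Q$.} I would invoke the standard local orthogonality estimate: for $g$ supported in $[-1,1]^2$ and an $R^{1/2}$-ball $Q$,
\[
\int_Q|Eg|^2\lesssim R^{1/2}\|g\|_2^2 .
\]
This follows from Plancherel in $(x,y)$ for each fixed $t$, integrated over a $t$-interval of length $R^{1/2}$; the same argument works with a weight $w_Q$. Applying it to $g=f_Q$ reduces matters to bounding $\|\sum_{T:\,Q\subset Y(T)}f_T\|_2^2$.

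Next I use almost orthogonality of the $f_T$. Packets with different $\theta$ have finitely overlapping Fourier supports, namely $3\theta$ by Lemma~\ref{lem property}(3). For fixed $\theta$, the functions $f_{\theta,v}=(f\phi_\theta)*\hat\psi_v$ are almost orthogonal in $v$, because the $\psi_v$ form a partition of unity with bounded overlap, up to the $R^{O(\varepsilon_3)}$ tube overlap and Schwartz tails. Together these give
\[
\Big\|\sum_{T\in\mathbb T'}f_T\Big\|_2^2\lesssim\sum_{T\in\mathbb T'}\|f_T\|_2^2
\]
for any subfamily $\mathbb T'$. This subfamily bound is the step I expect to be the main obstacle. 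Orthogonality in $\theta$ is clean, but in $v$ one must square-sum $\psi_v$ pieces, which are not disjointly supported. I would handle this with $\sum_v|\psi_v|^2\lesssim1$ and a Cauchy--Schwarz or Schur test on the Gram matrix $\langle f_{\theta,v},f_{\theta,v'}\rangle$, whose entries decay rapidly in $|c_v-c_{v'}|/R^{1/2}$. Any $R^{\varepsilon}$ losses are absorbed into $\lesssim$.

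\textbf{Step 3: sum over $Q$ by double counting.}
\[
\int_X\lesssim R^{1/2}\sum_Q\sum_{T:\,Q\subset Y(T)}\|f_T\|_2^2=R^{1/2}\sum_T\#\{Q\subset Y(T)\}\,\|f_T\|_2^2\lesssim \lambda R\sum_T\|f_T\|_2^2 .
\]
Finally, $\sum_T\|f_T\|_2^2\lesssim\|f\|_2^2$ by the same almost orthogonality applied to the full family, which completes the estimate.
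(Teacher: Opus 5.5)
The paper states this lemma without proof, as a recalled standard refined $L^2$ estimate. Your argument is the standard proof behind it and is correct: on each ball $Q$, Plancherel in $(x,y)$ over a $t$-interval of length $R^{1/2}$; then orthogonality of the packets; then double counting against the $\lambda R^{1/2}$ bound on the number of balls in each $Y(T)$.

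The one step to tighten is the subfamily orthogonality in Step 2. Because $\psi_v$ has Schwartz tails, a Schur or Cauchy--Schwarz argument does not literally give $\sum_{T\in\mathbb T'}\|f_T\|_2^2$ on the right. The pointwise bound $|\sum_{v\in V'}\psi_v|^2\le(\sum_{v}|\psi_v|)(\sum_{v\in V'}|\psi_v|)\lesssim\sum_{v\in V'}|\psi_v|$ gives the estimate with $\|f_{\theta,v}\|_2^2$ replaced by $\int|(f\phi_\theta)^\vee|^2|\psi_v|$. Since $\sum_v|\psi_v|\lesssim 1$, your Step 3 double counting then goes through unchanged and ends with $\lambda R\sum_\theta\|f\phi_\theta\|_2^2\lesssim\lambda R\|f\|_2^2$.
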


Now we turn our perspective to incidence geometry and give the notions of shading and denseness.
\begin{definition}
Let $\delta \in (0,1)$. Let $\mathbb{T}$ be a family of $\delta$-tubes in $\mathbb{R}^{3}$. A shading $Y$ is an assignment $T\rightarrow{Y(T)}$ such that
$Y(T)$ is a union of $\delta$-balls lying inside $T\cap B_{1}^{3}$ for each $T\in \mathbb{T}$. We say $Y$ is $\lambda$-dense, if $|Y(T)|\geq \lambda|T\cap B_{1}^{3}|$.
\end{definition}

Let us recall some other notions from incidence geometry, namely the two-ends condition and the $m$-parallel condition. The former describes the non-concentration property of the shading within a tube, while the latter controls the number of parallel tubes in the tube family.

\begin{definition}[Two-ends]
Let $\delta\in (0,1)$ and $(\mathbb{T},Y)$ be a $\delta$-separated tube family with associated shading. Let $0<\varepsilon_{2}<\varepsilon_{1}<1$. We say the shading $Y$
is $(\varepsilon_{1},\varepsilon_{2})$-two-ends if for all $T\in \mathbb{T}$ and all $\delta\times \delta^{\varepsilon_{1}}$-tubes $J\subset T$,
$$|Y(T)\cap J|\lesssim \delta^{\varepsilon_{2}}|Y(T)|.$$
\end{definition}

\begin{definition}
    We say a family of $\delta$-tubes $\mathbb{T}$ is $m$-parallel if there are $\lesssim m$ tubes pointing in the direction $V$ for every $\delta$-ball in the unit sphere $\mathbb{S}^{2}$.
\end{definition}

We now recall the incidence estimate due to Wang and Wu, together with later refinements by Gao, Wu, and the third author. The argument uses the two-ends Furstenberg inequality and the hairbrush structure. 

\begin{lemma}[\cite{WangWu2024,GWX}]\label{lem incidence}
    Let $(\mathbb{T},Y)$ be a set of tubes and shading in $\mathbb{R}^{3}$, and let $m\ge1$. Suppose $\mathbb{T}$ is the tube family with $m$-parallel and $Y$ is an $(\varepsilon_{1},\varepsilon_{2})$-two-ends, $\lambda$-dense shading. For all $x\in \bigcup_{T\in \mathbb{T}}Y(T)$, define $\mathbb{T}(x)=\{T\in \mathbb{T}:x\in Y(T)\}$.
    For any $\epsilon>0$, there exists a constant $C_{\epsilon}$ such that
    \begin{equation}
        \bigl|\bigcup_{\mathbb{T}} Y(T)\bigr|\ge C_{\epsilon}\delta^{\epsilon}\delta^{O(\varepsilon_{1})}m^{-\frac{3}{4}}\lambda^{\frac{7}{4}}\delta^{\frac{1}{2}}(\delta^{2}\#\mathbb{T})^{\frac{3}{4}}.
    \end{equation}
    In particular, by taking $\mu=\delta^{-O(\varepsilon_{1})}m^{\frac{3}{4}}\lambda^{-\frac{3}{4}}\delta^{-\frac{1}{2}}(\delta^{2}\#\mathbb{T})^{\frac{1}{4}}$, there exists a set
    $X_{\mu}\subset \bigcup_{T\in \mathbb{T}}Y(T)$ such that $\#\mathbb{T}(x)\lessapprox \mu$ for all $x\in X_{\mu}$, and
    $$|\bigcup_{T\in \mathbb{T}}Y(T)\setminus X_{\mu}|\leq \delta^{\varepsilon_{1}}|\bigcup_{T\in \mathbb{T}}Y(T)|.$$
\end{lemma}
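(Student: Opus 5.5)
The statement just quoted is the incidence estimate of Wang--Wu, in the refined form of Gao--Wu and the third author (Lemma~\ref{lem incidence}). It is a cited result, so the plan is to derive the two assertions from the cited inputs rather than reprove the Furstenberg machinery.

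\textbf{Step 1: the volume bound.} First I normalize by pigeonholing.
\begin{enumerate}
\item Pass to a subfamily on which each point of $\bigcup Y(T)$ has multiplicity $\#\mathbb T(x)\sim \mu_0$. This is done by dyadic pigeonholing, losing only $\log(1/\delta)$ factors.
\item Further refine so that the shading remains $(\varepsilon_1,\varepsilon_2)$-two-ends and $\gtrapprox\lambda$-dense. This costs $\delta^{O(\varepsilon_1)}$.
\end{enumerate}
Then I invoke the two-ends Furstenberg estimate in $\mathbb R^3$ from \cite{WangWu2024}. This bounds the multiplicity $\mu_0$ in terms of $\lambda$, $\#\mathbb T$ and the parallel parameter $m$, via the hairbrush argument. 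The hairbrush is formed by grouping tubes through a fixed tube and applying the planar Furstenberg bound on each plane slab.

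The $m$-parallel hypothesis enters in two places:
\begin{itemize}
\item through the Kakeya-type counting of directions, which introduces the factor $m^{3/4}$;
\item in converting $\#\mathbb T$ into the number of distinct directions, which is at least $\#\mathbb T/m$.
\end{itemize}
Double counting then gives
\[
\mu_0\,|\textstyle\bigcup Y(T)|\gtrsim \lambda\,\delta^{2}\#\mathbb T,
\]
and substituting the multiplicity bound yields the stated lower bound on $|\bigcup Y(T)|$. The exponents are $m^{-3/4}\lambda^{7/4}\delta^{1/2}(\delta^2\#\mathbb T)^{3/4}$.

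\textbf{Step 2: the ``in particular'' clause.} Define $X_\mu$ as the set of points with $\#\mathbb T(x)\le C_\epsilon\delta^{-\epsilon}\mu$.

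Suppose the complement $\bigcup Y(T)\setminus X_\mu$ had measure exceeding $\delta^{\varepsilon_1}|\bigcup Y(T)|$. Restrict the shadings to this high-multiplicity set, i.e. set $Y'(T)=Y(T)\cap(X_\mu)^c$. Then pigeonhole the subfamily of tubes for which $Y'(T)$ is $\gtrapprox\delta^{O(\varepsilon_1)}\lambda$-dense; these tubes carry most of the incidences.

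Because $\varepsilon_2<\varepsilon_1$, the two-ends property survives the refinement with constants $\delta^{-O(\varepsilon_1)}$. So the volume bound of Step~1 applies to $(\mathbb T',Y')$. Combining it with double counting,
\[
\mu\,|\textstyle\bigcup Y'|\lesssim \lambda\delta^2\#\mathbb T',
\]
forces the multiplicity on $\bigcup Y'$ to be at most $\delta^{-O(\varepsilon_1)}\mu$. This contradicts the definition of the complement once the implicit constants are chosen appropriately.

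\textbf{Main obstacle.} The delicate point is Step~2: after the restriction, the shading must remain two-ends and dense with only $\delta^{O(\varepsilon_1)}$ losses. This is precisely the refinement carried out in \cite{GWX}. I would follow their iterative pigeonholing, which repeatedly discards the tubes whose shading concentrates in a $\delta^{\varepsilon_1}$-segment, and then assume the Furstenberg input from \cite{WangWu2024} as stated.
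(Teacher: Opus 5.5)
The paper does not prove this lemma; it imports it from \cite{WangWu2024,GWX} and uses it as a black box in Step~3 of the broad estimate. Your Step~1 ultimately does the same (``invoke the two-ends Furstenberg estimate''), which is fine. The surrounding narrative about uniform-multiplicity refinements, hairbrushes and the origin of $m^{3/4}$ describes the cited proof rather than deriving anything, and should be replaced by a precise pointer to the cited theorem.

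The genuine problem is Step~2. You claim the two-ends property survives the restriction $Y'(T)=Y(T)\setminus X_\mu$ ``because $\varepsilon_2<\varepsilon_1$'', but the inequality goes the other way. If $|Y'(T)|\ge\delta^{C\varepsilon_1}|Y(T)|$, all you can say is
\[
|Y'(T)\cap J|\le|Y(T)\cap J|\lesssim\delta^{\varepsilon_2}|Y(T)|\le\delta^{\varepsilon_2-C\varepsilon_1}|Y'(T)|.
\]
Since $\varepsilon_2<\varepsilon_1$, the exponent is negative, so no two-ends information survives; $Y'(T)$ may lie entirely inside one $\delta^{\varepsilon_1}$-segment of $T$. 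Hence the volume bound cannot be reapplied to $(\mathbb{T}',Y')$, and deferring to an unspecified iterative pigeonholing from \cite{GWX} does not repair this. The same objection applies to item~2 of your Step~1, where the refinement is not needed anyway. The contradiction is also not set up correctly. After you discard tubes whose $Y'(T)$ is sparse, the $\mathbb{T}'$-multiplicity on $\bigcup Y'$ is no longer the $\mathbb{T}$-multiplicity that defines the complement of $X_\mu$.

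The missing observation is that the ``in particular'' clause needs no refinement: it is Markov's inequality applied to the volume bound. Put $U=\bigcup_TY(T)$. Then $\int_U\#\mathbb{T}(x)\,dx=\sum_T|Y(T)|$, which is $\lesssim\lambda\delta^2\#\mathbb{T}$ when all densities are $\sim\lambda$; this is the situation in the application, where every shading has density $\sim\lambda\beta$. In general, sort the tubes into $O(\log\delta^{-1})$ dyadic density classes $\lambda'\ge\lambda$. Each class is still $m$-parallel and two-ends, since both conditions are checked tube by tube. Apply the volume bound to each class and use $\lambda'^{-3/4}\le\lambda^{-3/4}$ and $\#\mathbb{T}_{\lambda'}\le\#\mathbb{T}$; this gives the same conclusion up to a logarithm. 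Dividing by the lower bound for $|U|$ shows that the average of $\#\mathbb{T}(x)$ over $U$ is $\lessapprox\delta^{-O(\varepsilon_1)}m^{3/4}\lambda^{-3/4}\delta^{-1/2}(\delta^2\#\mathbb{T})^{1/4}$. So the set where $\#\mathbb{T}(x)$ exceeds $\delta^{-\varepsilon_1}$ times this average has measure at most $\delta^{\varepsilon_1}|U|$, and its complement in $U$ is the required $X_\mu$. The extra factor $\delta^{-\varepsilon_1}$ is exactly what the $\delta^{-O(\varepsilon_1)}$ in the definition of $\mu$ absorbs.
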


\subsection{Broad and Narrow reduction}

Assume $K\ge 1$ is dyadic. Take $\frac{1}{K}$-discretizations for direction and position in the parameter plane. Let us denote by $\mathcal{C}_{K}$ the collection of all dyadic $1/K$-squares in $[-1,1]^{2}$.

\begin{definition}[$A$-Broad]
Let $K\ge A\geq 1$. We say that a collection $\mathcal{M}=\{\tau\}\subset \mathcal{C}_{K}$ is $A$-broad if
\begin{itemize}
    \item $\#\mathcal{M}\ge A.$
    \item  For any distinct $\tau_{1},\tau_{2}$, $(\tau_{1},\tau_{2})$ are $\frac{1}{K}$-transverse. 
\end{itemize}
\end{definition}

\begin{definition}
    Let $K\ge A\ge1$. Consider a collection $\{F^{\tau}\}_{\tau\in \mathcal{C}_{K}}$ of functions $F^{\tau}:\mathbb{R}^{3}\rightarrow{\mathbb{C}}$.
    For any $x\in \mathbb{R}^{3}$, we define the broad function $Br_{A}\{F^{\tau}\}(x)$ as
    $$Br_{A}\{F^{\tau}\}(x)=\max_{\mathcal{M}:\mathcal{M}\text{ is $A$-broad}}\min_{\tau\in \mathcal{M}}|F^{\tau}(x)|.$$
\end{definition}

\begin{remark}
    $Br_{A}\{F^{\tau}\}(x)$ is non-increasing as $A$ increases.
\end{remark}

Now let us turn to the properties of the broad norm.
\begin{lemma}\label{lem broad1}
    If $A=A_{1}+A_{2}+\cdots+A_{N}$ and $F^{\tau}=F_{1}^{\tau}+F_{2}^{\tau}+\cdots+F_{N}^{\tau}$, then
    \begin{equation}\label{eq triangleinequality}
        Br_{A}\{F^{\tau}\}(x)\leq  Br_{A_{1}}\{F^{\tau}\}(x)+ Br_{A_{2}}\{F^{\tau}\}(x)+\cdots+ Br_{A_{N}}\{F^{\tau}\}(x).
    \end{equation}
\end{lemma}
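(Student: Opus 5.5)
The plan is to follow the standard argument for the triangle inequality for broad norms, as in the elliptic setting, working pointwise at a fixed $x$. Set $\beta_i:=Br_{A_i}\{F_i^\tau\}(x)$ for $i=1,\dots,N$; the right-hand side of \eqref{eq triangleinequality} is then naturally read with $F_i^\tau$ in the $i$-th term. Here $Br_{A_i}\{F_i^\tau\}(x)$ is the quantity defined above with $A$ replaced by $A_i$. We aim to show that for every $A$-broad collection $\mathcal M$,
\[
\min_{\tau\in\mathcal M}|F^\tau(x)|\le \sum_{i=1}^N \beta_i .
\]
Taking the maximum over $\mathcal M$ then gives the claim.

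Fix an $A$-broad $\mathcal M$. For each $i$, define the exceptional subcollection
\[
\mathcal M_i:=\{\tau\in\mathcal M:\ |F_i^\tau(x)|>\beta_i\}.
\]
The key observation is that $\#\mathcal M_i<A_i$. Suppose instead that $\#\mathcal M_i\ge A_i$. We use that the transversality condition in the definition of $A$-broad is purely pairwise: any two distinct elements of a subcollection of $\mathcal M$ remain $\frac1K$-transverse, since Definition~\ref{def transverse} concerns only the pair itself. Hence $\mathcal M_i$ would itself be $A_i$-broad, which would give
\[
Br_{A_i}\{F_i^\tau\}(x)\ge \min_{\tau\in\mathcal M_i}|F_i^\tau(x)|>\beta_i,
\]
a contradiction.

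It follows that
\[
\#\bigcup_i\mathcal M_i<\sum_i A_i=A\le\#\mathcal M.
\]
So there exists $\tau_0\in\mathcal M$ lying in none of the $\mathcal M_i$, that is, $|F_i^{\tau_0}(x)|\le\beta_i$ for every $i$. By the triangle inequality,
\[
\min_{\tau\in\mathcal M}|F^\tau(x)|\le|F^{\tau_0}(x)|\le\sum_i|F_i^{\tau_0}(x)|\le\sum_i\beta_i .
\]

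The only point needing care is the heredity of broadness under passing to subcollections. This is the step I would check explicitly against Definition~\ref{def transverse}: it holds because transversality of $(\tau_1,\tau_2)$ depends only on $\tau_1$, $\tau_2$ and the fixed family of $3/K$-bad strips, not on the ambient collection. Two minor edge cases remain. If some $A_i$ is not an integer, the strict inequality $\#\mathcal M_i<A_i$ still yields the needed count. If no $A_i$-broad collection exists, $\beta_i$ should be interpreted as $\max\emptyset=0$, and the argument above still goes through, since then $\#\mathcal M_i<A_i$ holds for the same reason.
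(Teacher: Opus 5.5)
Your argument is correct. The paper states this lemma without proof, treating it as the standard triangle inequality for broad functions, and your pigeonhole argument is exactly that standard proof: each exceptional set $\mathcal M_i=\{\tau\in\mathcal M:|F_i^\tau(x)|>\beta_i\}$ has fewer than $A_i$ elements, because pairwise $\frac1K$-transversality passes to subcollections, so some $\tau_0\in\mathcal M$ avoids all of them. You are also right to read the $i$-th term on the right-hand side as $Br_{A_i}\{F_i^\tau\}(x)$; the printed statement omits the subscript $i$ on $F^\tau$.
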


\begin{lemma}\label{lem broad2}
    If $A\ge2$, then 
    \begin{equation}
        Br_{A}\{F^{\tau}\}(x)\leq \max_{(\tau_{1},\tau_{2})\text{is a $\frac{1}{K}$-transverse}}|F^{\tau_{1}}(x)F^{\tau_{2}}(x)|^{1/2}.
    \end{equation}
\end{lemma}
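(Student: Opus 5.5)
The plan is to unwind the definition of the broad function and use only the transversality built into $A$-broadness. Fix $x\in\mathbb R^3$. The family $\mathcal C_K$ is finite, so the maximum in the definition of $Br_A\{F^\tau\}(x)$ is attained by some $A$-broad collection $\mathcal M$; call it $\mathcal M_0$. If there is no $A$-broad collection at all, we interpret the maximum over the empty set as $0$, and the inequality is trivial. Hence
\[
Br_A\{F^\tau\}(x)=\min_{\tau\in\mathcal M_0}|F^\tau(x)|.
\]

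Since $A\ge 2$, the definition of $A$-broad gives $\#\mathcal M_0\ge A\ge 2$. We can therefore choose two distinct squares $\tau_1,\tau_2\in\mathcal M_0$. The second condition in the definition of $A$-broadness says precisely that any two distinct members of $\mathcal M_0$ are $\frac1K$-transverse in the sense of Definition~\ref{def transverse}. So $(\tau_1,\tau_2)$ is an admissible pair on the right-hand side.

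The minimum over $\mathcal M_0$ is bounded by each of $|F^{\tau_1}(x)|$ and $|F^{\tau_2}(x)|$. It is therefore bounded by their geometric mean:
\[
Br_A\{F^\tau\}(x)\le \min\bigl(|F^{\tau_1}(x)|,|F^{\tau_2}(x)|\bigr)\le |F^{\tau_1}(x)F^{\tau_2}(x)|^{1/2}.
\]
Taking the maximum over all $\frac1K$-transverse pairs on the right gives the claim.

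There is no real obstacle. The only points to check are that the maximum is attained (which holds by finiteness of $\mathcal C_K$) and that the transversality relation is applied to an unordered pair. The latter is harmless, because Definition~\ref{def transverse} is symmetric in $\tau_1$ and $\tau_2$.
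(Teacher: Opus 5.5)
Your argument is correct. Take two distinct members of an optimal (or indeed any) $A$-broad family; they are $\frac1K$-transverse by definition, and the minimum is bounded by their geometric mean. This is exactly the standard reasoning behind the lemma, which the paper states without proof as an immediate consequence of the definitions.
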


Now we define some notation and present the key broad-narrow reduction. Let $\{S\}$ denote the family of all $1/K$-bad strips in the parameter plane.
Define $f_{\tau}=f\mathbf{1}_{\tau}$ and $f_{S}=f\mathbf{1}_{S}$.

\begin{prop}\label{prop reduction}
    Let $K\gg1$. Let $\{S\}$ be a family of the $\frac{1}{K}$-bad strips associated with the surface. Then for all $x\in \mathbb{R}^{3}$ and $\epsilon'>0$ we have
    \begin{equation}\label{eq BN}
        |Ef(x)|\lesssim_{\epsilon'}K^{3\epsilon'}\max_{\tau\in \mathcal{C}_{K}}|Ef_{\tau}(x)|
        +K^{2\epsilon'}\max_{S} |Ef_{S}(x)|
        +K^{3}\cdot Br_{K^{\epsilon'}}Ef(x).
    \end{equation}
\end{prop}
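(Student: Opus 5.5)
Fix $x$ and write $Ef(x)=\sum_{\tau\in\mathcal C_K}Ef_\tau(x)$. The plan is a pointwise pigeonholing in the style of Bourgain--Guth, adapted to bad strips. Let $\mathcal L(x)=\{\tau: |Ef_\tau(x)|\ge K^{-2}\max_{\tau'}|Ef_{\tau'}(x)|\}$ be the significant squares. The non-significant squares contribute at most $K^2\cdot K^{-2}\max_\tau|Ef_\tau(x)|$, which is absorbed into the first term. We then split into cases according to how $\mathcal L(x)$ sits relative to the bad strips.

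\textbf{Case A: $\#\mathcal L(x)< K^{\epsilon'}$.} By the triangle inequality, $|Ef(x)|\lesssim K^{\epsilon'}\max_\tau|Ef_\tau(x)|$, which gives the first term.

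\textbf{Case B: there is an $A$-broad subfamily of $\mathcal L(x)$ with $A=K^{\epsilon'}$.} Every member of $\mathcal L(x)$ satisfies $|Ef_\tau(x)|\ge K^{-2}\max|Ef_{\tau'}(x)|\ge K^{-2}|Ef(x)|/K^2$. Hence $Br_{K^{\epsilon'}}Ef(x)\ge K^{-4}|Ef(x)|$, or better, $\ge K^{-3}|Ef(x)|$ after choosing the significance threshold as $K^{-1}|Ef(x)|/K^2$ directly. This yields the third term. I will define significance relative to $|Ef(x)|$ instead: $\tau$ is significant if $|Ef_\tau(x)|\ge K^{-3}|Ef(x)|$. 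The insignificant squares then sum to at most $K^2K^{-3}|Ef(x)|\le\tfrac12|Ef(x)|$, so $|Ef(x)|\le 2|\sum_{\tau\in\mathcal L}Ef_\tau(x)|$, and broadness gives $|Ef(x)|\le K^3 Br$.

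\textbf{Case C: $\#\mathcal L(x)\ge K^{\epsilon'}$ but no $K^{\epsilon'}$-broad subfamily exists.} This is the main obstacle. Build a maximal pairwise $\frac1K$-transverse subfamily $\mathcal M\subset\mathcal L(x)$ greedily. Then $\#\mathcal M<K^{\epsilon'}$, and every $\tau\in\mathcal L(x)$ fails to be transverse with some $\tau'\in\mathcal M$. Failing transversality means either that $\tau$ is within distance $\lesssim 1/K$ of $\tau'$, or that $\tau$ lies in a $3/K$-bad strip through $\tau'$, or the reverse. By Lemma~\ref{lem finite2}(2), each $\tau'$ meets only $O_D(1)$ bad strips, and the symmetric case can be handled the same way. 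Thus $\mathcal L(x)$ is covered by $O_D(K^{\epsilon'})$ objects: neighbors of squares in $\mathcal M$, and $O(1/K)$-bad strips through squares of $\mathcal M$, each coverable by $O(1)$ strips from the family $\{S\}$. The neighbor squares contribute at most $K^{\epsilon'}\max_\tau|Ef_\tau(x)|$.

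For the strip contributions, we need to pass from $\sum_{\tau\in\mathcal L\cap S}Ef_\tau$ to $Ef_S$. This is where the lost factors appear. I would handle it by repeating the whole dichotomy inside each strip via iteration, or, more simply, by redefining the decomposition as $f=\sum_{S}f_S\cdot(\text{partition weights})+\text{rest}$, splitting $f$ into the union of the chosen strips and its complement. The complement then has no significant squares, so it is small. This yields $|Ef(x)|\lesssim K^{\epsilon'}\max_S|Ef_{S}(x)|+K^{\epsilon'}\max_\tau|Ef_\tau(x)|$ plus a negligible error.

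The overlapping of strips, which is bounded by Lemma~\ref{lem finite2}, and the counting of strips costs another $K^{\epsilon'}$. Combining all cases, with $\lesssim_{\epsilon'}$ absorbing the $O_D(1)$ constants, gives \eqref{eq BN}. The delicate point I expect is the last step: making the strip-restricted sum comparable to $Ef_S$ with only $K^{O(\epsilon')}$ loss. If needed, I will use sharp cutoffs $f_S=f\mathbf 1_S$ with a disjointified family of strips, replacing $S$ by $S\setminus\bigcup_{\text{earlier}}S'$. I would then accept that this is still a valid $f_S$-type piece up to $O_D(1)$ subdivisions.
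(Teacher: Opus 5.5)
Your outline matches the paper's. You use the significance threshold $K^{-3}|Ef(x)|$, so that $|Ef(x)|\le 2|\sum_{\tau\in\mathcal L}Ef_\tau(x)|$. A greedily built pairwise $\frac1K$-transverse subfamily gives the broad term once it reaches size $K^{\epsilon'}$. Otherwise $\mathcal L(x)$ is covered by $O_D(K^{\epsilon'})$ bad strips through its members, via Lemma~\ref{lem finite2}. The paper phrases this contrapositively: it assumes $|Ef_\tau(x)|\le K^{-3\epsilon'}|Ef(x)|$ and $|Ef_S(x)|\le K^{-2\epsilon'}|Ef(x)|$, and shows such a cover is impossible. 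Your explicit handling of the adjacency clause in the definition of transversality is, if anything, more careful than the paper's.

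There is a genuine gap at exactly the step you flag, and neither of your two fixes closes it.

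\emph{The multiplicity bound is not enough.} Lemma~\ref{lem finite2}(2) bounds the multiplicity with which a square is covered by the chosen strips. It does not bound the number of squares covered at least twice. So ``the counting of strips costs another $K^{\epsilon'}$'' is not an argument. Your Case~C conclusion, with only $K^{\epsilon'}\max_\tau|Ef_\tau(x)|$, silently drops the overlap error.

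\emph{Disjointification does not help.} The function $f\mathbf 1_{S\setminus\bigcup S'}$ is not one of the $f_S$ appearing in \eqref{eq BN}. Moreover, $E(f\mathbf 1_{S\setminus\bigcup S'})(x)$ differs from $Ef_S(x)$ by precisely the overlap contributions you were trying to avoid. There is also no reason for $S\setminus\bigcup S'$ to split into $O_D(1)$ pieces, since up to $K^{\epsilon'}$ earlier strips can cut it.

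\emph{The missing input is the paper's count.} With $\mathcal R$ the chosen strips and $m\le C_D$, write
\[
\sum_{\tau\subset\bigcup_{S\in\mathcal R}S}Ef_\tau(x)=\sum_{S\in\mathcal R}Ef_S(x)-\sum_{m=2}^{C_D}(m-1)\sum_{\tau\text{ in exactly }m\text{ strips}}Ef_\tau(x).
\]
The paper then asserts that, since $\#\mathcal R\lesssim K^{\epsilon'}$, only $O(K^{2\epsilon'})$ squares lie in more than one strip (implicitly, two distinct strips share $O(1)$ squares). Hence:
\begin{itemize}
\item the correction term is $\lesssim C_DK^{2\epsilon'}\max_\tau|Ef_\tau(x)|$;
\item the main term is $\lesssim K^{\epsilon'}\max_S|Ef_S(x)|$.
\end{itemize}
This is exactly where the weights $K^{3\epsilon'}$ and $K^{2\epsilon'}$ in \eqref{eq BN} come from; in the paper's contradiction form, both pieces are $\lesssim K^{-\epsilon'}|Ef(x)|$. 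If you substitute this count for the disjointification, your Case~C becomes the paper's argument.
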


\begin{proof}
We may assume that
\begin{equation}\label{eq assumption1}
    |Ef_{\tau}(x)| \leq K^{-3\epsilon'} |Ef(x)| 
\end{equation}
and
\begin{equation}\label{eq assumption2}
    |Ef_{S}(x)| \leq K^{-2\epsilon'} |Ef(x)| 
\end{equation}
for all $S$. Otherwise, $|Ef(x)|$ can be controlled by either the first term or the second term on the right-hand side of \eqref{eq BN}.

Now we prove $|Ef(x)|\lesssim_{\epsilon'} K^{3}\cdot Br_{K^{\epsilon'}}Ef(x)$ under the above assumptions.

Let $\mathcal{M}$ be the family of $\frac{1}{K}$-squares such that
\[
    |Ef_{\tau}(x)|\ge K^{-3}|Ef(x)|.
\]
Then we have
\[
    \left|\sum_{\tau\in \mathcal{M}}Ef_{\tau}(x)\right|
    \geq \frac{1}{2}|Ef(x)|.
\]

\begin{claim}
The set $\mathcal M$ cannot be covered by the threefold enlargements of a family $\mathcal R$ of $K^{-1}$-bad strips with
\[
\#\mathcal R \lesssim K^{\epsilon'}.
\]
\end{claim}

\begin{proof}[Proof of Claim]
    If we assume the contrary, we have
\begin{equation}
\begin{split}
\sum_{\tau\in \mathcal{M}} Ef_{\tau}(x)
= \;& \sum_{S\in \mathcal{R}} Ef_{S}(x)
\\
&\quad - \sum_{m=2}^{C_D} (m-1)
\sum_{\substack{\tau\in \mathcal{M}:\\
\#\{S\in \mathcal{R}:\, \tau\subset S\}=m}}
Ef_{\tau}(x),
\end{split}
\end{equation}
where $C_{D}$ denotes the uniform upper bound for the overlap multiplicity. Here we use lemma \ref{lem finite2}.
Therefore, we have
\begin{equation}
\begin{split}
\frac{1}{2}|Ef(x)|
\leq\;& \sum_{S\in \mathcal{R}} |Ef_S(x)|
\\
&\quad + \sum_{m=2}^{C_D} (m-1)
\sum_{\substack{\tau\in \mathcal{M}:\\
\#\{S\in \mathcal{R}:\, \tau\subset S\}=m}}
|Ef_\tau(x)|.
\end{split}
\end{equation}
Since $\#\mathcal{R}\lesssim K^{\epsilon'}$, there are at most $K^{2\epsilon'}$ many $\frac{1}{K}$-squares intersecting more than two bad strips. Combining this with the assumptions \eqref{eq assumption1} and \eqref{eq assumption2}, we obtain
\[
    \frac{1}{2}|Ef(x)|\lesssim K^{-\epsilon'}|Ef(x)|,
\]
which is a contradiction for $K$ sufficiently large.
\end{proof}

Finally, we prove that there exists a subset $\mathcal M(x)\subset \mathcal M$ which is $K^{\epsilon'}$-broad.

Start with any $\tau_1\in\mathcal M$. Suppose that $\tau_1,\ldots,\tau_m$ have already been chosen, where $m<K^{\epsilon'}$. For each $i=1,\ldots,m$, let $\mathcal R(\tau_i)$ denote the family of $K^{-1}$-bad strips passing through $\tau_i$. By Lemma~\ref{lem finite2}, each $\mathcal R(\tau_i)$ has cardinality $O_D(1)$. Hence
\[
    \#\Big(\bigcup_{i=1}^m \mathcal R(\tau_i)\Big)\lesssim K^{\epsilon'} .
\]
Therefore, by the preceding claim, the set $\mathcal M$ cannot be covered by
\[
    \bigcup_{i=1}^m\bigcup_{S\in\mathcal R(\tau_i)} 3S .
\]
Thus we may choose $\tau_{m+1}\in\mathcal M$ outside this union.

For every $i\le m$, the square $\tau_{m+1}$ lies outside the threefold enlargement of every $K^{-1}$-bad strip passing through $\tau_i$. Hence the pair $(\tau_i,\tau_{m+1})$ is $K^{-1}$-transverse. Continuing this selection process inductively, we obtain
\[
    \mathcal M(x)=\{\tau_1,\ldots,\tau_{\lfloor K^{\epsilon'}\rfloor}\}\subset\mathcal M
\]
whose elements are pairwise $K^{-1}$-transverse. Hence $\mathcal M(x)$ is $K^{\epsilon'}$-broad.
\end{proof}

{
\begin{remark}\label{rem reduction}
    The above reduction is needed only under the assumption that bad lines exist
    in the parameter plane. If there are no bad lines in the parameter plane,
    then the preceding proposition contains only the first and third terms.
    Moreover, in the absence of bad lines, the propagation property of good
    lines in Proposition \ref{prop bad2} holds automatically. Thus, the proof
    of Theorem \ref{thm refineddecoupling} does not require any additional
    slope restriction. Consequently, one may either apply the bilinear refined
    decoupling theorem directly or use induction on scales to handle the cubic
    narrow part.
\end{remark}
}

\subsection{The estimate for the broad part}
\begin{prop}\label{prop broadestimate}
    Assume $\epsilon\ll10^{-5}$ is small enough. For $R\gg 1$, let $K=R^{\epsilon^{10}}$.
    Then there exists $C_{\epsilon}>0$ such that for all $R\ge1$,
    \begin{equation}\label{eq broad}
        \int_{B_{R}}|Br_{A}Ef|^{p}\leq C_{\epsilon}R^{2\epsilon}\|f\|_{L^{2}}^{2p-{4}}\sup_{\theta:R^{-1/2}\text{-square}}\|f_{\theta}\|_{L^{2}_{avg}(\theta)}^{4-p}.
    \end{equation}
    for $p=\frac{22}{7}$ and $A\ge R^{\epsilon^{20}}$. Here, $\|f_{\theta}\|_{L^{2}_{avg}(\theta)}$ is defined as
    $$\|f_{\theta}\|_{L^{2}_{avg}(\theta)}^{2}\coloneqq |\theta|^{-1}\|f_{\theta}\|_{2}^{2}.$$
\end{prop}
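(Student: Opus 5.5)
We follow the Wang--Wu / Demeter--Wu scheme. Decompose $f=\sum_{T\in\bar{\mathbb T}}f_T$ into wave packets at scale $R$ (Lemma~\ref{lem property}) and pigeonhole, at a cost of $(\log R)^{O(1)}$, so that:
\begin{itemize}
\item all $\|f_T\|_2$ are comparable;
\item the number of tubes per direction $\theta$ is comparable to some $m$;
\item we may restrict to a set $X\subset B_R$, a union of $R^{1/2}$-balls $Q$ on which $|Br_AEf|$ is roughly constant.
\end{itemize}
After a further dyadic pigeonholing, each $T$ carries a shading $Y(T)$ consisting of the balls $Q\subset X\cap T$ where $T$ contributes significantly. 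We make $Y$ $\lambda$-dense, and, via the standard two-ends reduction (induction on a smaller scale when the shading concentrates in a $R\times R^{1-\varepsilon_1}$ segment), $(\varepsilon_1,\varepsilon_2)$-two-ends. On each $Q\subset X$, Lemma~\ref{lem broad2} bounds $|Br_AEf|^2$ by $|Ef_{\tau_1}Ef_{\tau_2}|$ for some $\frac1K$-transverse pair, at the loss $K^{O(1)}=R^{O(\epsilon^{10})}$.

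Next we convert transversality into the good-pair setting of Theorem~\ref{thm refineddecoupling}. Let $\tau_1,\tau_2$ be $\frac1K$-transverse. If their distance is $\sim1$ and the joining lines are good, apply the theorem directly. Otherwise, rescale isotropically by the distance $d\ge 1/K$, which keeps the form \eqref{eq reduction}. Transversality guarantees that neither square lies in a $3/K$-bad strip through the other. By Proposition~\ref{prop bad2} and Corollary~\ref{cor 1}, the joining lines and the tangent-intersection directions are then separated from bad lines. We use the remark following Theorem~\ref{thm refineddecoupling}, which says this propagation property suffices even when the joining direction is near an axis. This gives
\[
\int_X |Ef_{\tau_1}Ef_{\tau_2}|^2\lessapprox K^{O(1)}(M_1M_2)^{1/2}\prod_j\Big(\sum_{T\in\mathbb T_j}\|Ef_T\|_4^4\Big)^{1/2},
\]
where $M_j$ bounds the number of tubes meeting each $Q$. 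Combined with Lemma~\ref{lem property}(6), $\|Ef_T\|_4^4\lesssim R^{-1}\|Ef_T\|_{L^2}^4\sim R\,\|f_T\|_2^4$.

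The multiplicity $M$ is controlled by Lemma~\ref{lem incidence}. Rescale $B_R$ to the unit ball with $\delta=R^{-1/2}$. The tube family is $m$-parallel and the shading is two-ends and $\lambda$-dense, so outside a negligible exceptional set every $Q$ meets
\[
M\lessapprox \mu=m^{3/4}\lambda^{-3/4}R^{1/4}(R^{-1}\#\mathbb T)^{1/4}
\]
tubes. Three further ingredients enter:
\begin{itemize}
\item the refined $L^2$ bound, Lemma~\ref{lem L2}, giving $\int_X|Br_AEf|^2\lesssim\lambda R\|f\|_2^2$;
\item the trivial $L^\infty$ bound by $M$ times the wave-packet amplitude;
\item the relation $\#\mathbb T\|f_T\|_2^2\sim\|f\|_2^2$, with $m\|f_T\|_2^2\lesssim R^{-1}\|f_\theta\|^2_{L^2_{avg}}$ encoding the sup over $\theta$.
\end{itemize}
Interpolating the $L^2$ estimate with the refined decoupling $L^4$ estimate, i.e.\ $\int_X|Br_AEf|^4\lessapprox M\sum_T\|Ef_T\|_4^4$, at the exponent $p=22/7$ chosen to balance the powers of $\lambda$, $m$ and $\#\mathbb T$, should produce exactly $\|f\|_2^{2p-4}\sup_\theta\|f_\theta\|_{L^2_{avg}}^{4-p}$. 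The $K^{O(1)}$, $\delta^{-O(\varepsilon_1)}$ and logarithmic losses are absorbed into $R^{2\epsilon}$ since $K=R^{\epsilon^{10}}$.

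The main obstacle is the second step: verifying that $\frac1K$-transverse pairs at small separation satisfy the propagation hypothesis needed in Theorem~\ref{thm refineddecoupling}, uniformly after rescaling, with only $K^{O(1)}$ losses. Closing the two-ends reduction by induction should be routine by comparison. For the obstacle, we would first check, via Lemma~\ref{lem finite2} and the $O(\varepsilon_0)$ slope bound, that after rescaling by $d$ the bad strips through each square have angle $\gtrsim K^{-1}$ from the joining line. We would then show that this angular separation is inherited at every later stage of the iteration by the Lipschitz bound on $\nabla\rho$ used in Corollary~\ref{cor 1}.
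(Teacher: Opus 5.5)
Your reduction steps (pigeonholing, the two-ends induction, the incidence bound $\mu$, and the $L^2$/$L^4$ interpolation) follow the paper. Your handling of transverse pairs whose connecting direction is good, or nearly axis-parallel with no nearby bad strip, matches the paper's Cases 1--2.

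\textbf{The gap.} It is the remaining configuration, the paper's Case 3: $\tau_1$ meets a $K^{-1}$-bad strip $S''$ whose direction lies within $\varepsilon_0^{1/2}$ of the connecting strip. Transversality then only says that $\tau_2$ lies at distance $\gtrsim K^{-1}$ from that bad line. So the angle between the joining lines and the bad line is only $\gtrsim 1/(Kd)$. This is comparable to the rescaled square size, and is as small as $K^{-1}$ when $d\sim1$. Isotropic rescaling preserves angles, so it never produces the \emph{uniform} separation that Remark~\ref{rem:weaker-good-condition} needs. Lemma~\ref{lem stability} and the generic-position condition both operate at the $\varepsilon_0^{1/2}$ level.

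Your proposed remedy cannot close this. The Lipschitz bound $|\nabla\rho(p_2)-\nabla\rho(p_1)|\lesssim\varepsilon_0|p_2-p_1|$ from Corollary~\ref{cor 1} only places the slope of $v(p_1,p_2)$ within $O(\varepsilon_0)$ of the reflected joining slope. Bad lines themselves are only known to be within $O(\varepsilon_0)$ of the axes. An angular separation of size $K^{-1}\ll\varepsilon_0$ is therefore not inherited through the iteration by that argument, and Proposition~\ref{prop bad2} is purely qualitative.

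\textbf{The missing idea.} It is the anisotropic rescaling adapted to the bad line. In coordinates where the bad line is $\{\eta=0\}$, one has $h=\eta\,\tilde h(\xi,\eta)$ with $\tilde h=\xi+\rho_1$, modulo affine terms, so $h$ has no pure $\xi^k$ terms. Place the centers at $(0,0)$ and $(2K^{-\alpha_1},2K^{-\alpha_1-\alpha_2})$; transversality gives $\alpha_1+\alpha_2\le1$. Then the map $\xi'=K^{\alpha_1}\xi$, $\eta'=K^{\alpha_1+\alpha_2}\eta$, $\gamma'=K^{2\alpha_1+\alpha_2}\gamma$ keeps the surface in the class \eqref{eq reduction}--\eqref{eq smallcoeff} and makes the connecting slope $\sim1$. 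A pure $\xi^2$ term would be amplified by $K^{\alpha_2}$, which is why this works only in bad-line coordinates.

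After this rescaling, splitting into $K^{O(1)}$ sub-squares yields genuine good pairs. Covering the anisotropic images of the $r^{1/2}$-balls and tubes by standard scale-$r'$ ones transfers the multiplicity bound $\mu$, all at $K^{O(1)}$ cost. Without this step, or a full quantitative re-proof of Theorem~\ref{thm refineddecoupling} under $K^{-1}$-separation using the bad-line normal form, your $L^4$ input is not established for all transverse pairs.

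\textbf{A smaller slip.} Lemma~\ref{lem property}(6) gives $\|Ef_T\|_4^4\lesssim R^{-2}\|Ef_T\|_{L^2}^4\sim\|f_T\|_2^4$, not $R\|f_T\|_2^4$. With your version the interpolation would not land on $\|f\|_2^{2p-4}\sup_\theta\|f_\theta\|_{L^2_{avg}}^{4-p}$.
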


\begin{proof}
 Fix $\epsilon>0$. We proceed by induction on $r$ to prove inequality \eqref{eq broad} for all $r \in [R^{\epsilon^{2}}, R]$ and $A \ge r^{\epsilon^{20}}$.
It suffices to prove the following inequality in the two-ends scenario
\begin{equation}\label{eq broadinduction}
    \int_{B_{r}}|Br_{A}Ef|^{p}\leq C_{\epsilon}R^{\epsilon}r^{\epsilon}\|f\|_{L^{2}}^{2p-4}\sup_{\theta:r^{-1/2}\text{-square}}\|f_{\theta}\|_{L^{2}_{avg}(\theta)}^{4-p}.
\end{equation}

When $r = R^{\epsilon^{2}}$, this is the base case, and it follows from elementary inequalities.
Let $n$ denote the number of iteration steps, satisfying $n \sim \frac{\ln\epsilon}{\ln(1-\epsilon^{2})}$.

Assume that \eqref{eq broadinduction} holds for $r = R^{\frac{\epsilon^{2}}{(1-\epsilon^{2})^{k-1}}}$ with $k \ge 1$.
Now fix $r = R^{\frac{\epsilon^{2}}{(1-\epsilon^{2})^{k}}}$ and fix $B_{r}$.
Partition the $r$-tubes $\bar{\mathbb{T}}=\mathbb{T}\cup\mathbb{T}_{s}$, where $\mathbb{T}_{s}=\{T\in \bar{\mathbb{T}}:\|f_{T}\|_{2}\leq r^{-100}\|f\|_{2}\}$.
For $f=\sum_{T\in\mathbb{T}_{s}}f_{T}$, we naturally get the inequality \eqref{eq broad}. Thus, we only need to consider $\int_{B_{r}}|Br_{A}Ef'|^{p}$, where $f'=\sum_{T\in \mathbb{T}}f_{T}$. Next, we take comparable reduction. We partition $\mathbb{T}=\cup_{\gamma,m}\mathbb{T}_{\gamma,m}$, where $\gamma,m \in [r^{-100},r^{10}]$ are dyadic numbers such that
the following holds
\begin{itemize}
    \item For all $T\in \mathbb{T}_{\gamma,m}$, $\|f_{T}\|_{2}\sim \gamma\|f\|_{2}$.
    \item For all $\theta$, either $\mathbb{T}_{\gamma,m}(\theta)=\varnothing$, or $\#\mathbb{T}_{\gamma,m}(\theta)\sim m$, where $\mathbb{T}_{\gamma,m}(\theta)$ denotes the subfamily of tubes with direction $\theta$ in $\mathbb{T}_{\gamma,m}$.
\end{itemize}
By dyadic pigeonholing and Lemma \ref{lem broad1}, there exists a $(\gamma,m)$ and $A_{g}\gtrapprox A$ such that
\begin{equation}
    \int_{B_{r}}|Br_{A}Ef'|^{p}\lessapprox \int_{B_{r}}|Br_{A_{g}}Eg|^{p},
\end{equation}
where $g=\sum_{T\in \mathbb{T}_{\gamma,m}}f_{T}$.
Continuing with the dyadic pigeonhole principle, we obtain a family $X$ of $r^{1/2}$-balls $Q$ such that
\begin{itemize}
    \item $\int_{Q}|Br_{A_{g}}Eg|^{p}$ is the same up to a constant multiple for all $Q\subset X$. 
    \item \begin{equation}
        \int_{B_{r}}|Br_{A_{g}}Eg|^{p}\lessapprox \int_{X}|Br_{A_{g}}Eg|^{p}.
    \end{equation}
\end{itemize}

\textbf{Step 1: Two-ends reduction}

We partition each $r$-tube $T \in \mathbb{T}_{\gamma,m}$ into tube segments $J$ of length $r^{1-\epsilon^{2}}$. 
Let $\mathcal{J}(T)$ denote the set of those segments that intersect $X$.
We then partition $\mathcal{J}(T) = \bigcup_{\lambda} \mathcal{J}_{\lambda}(T)$, where $\lambda \in \Lambda$ and $\Lambda$ denotes the family of dyadic numbers in $[r^{-1/2}, r^{-\epsilon^{2}}]$, such that for any $J \in \mathcal{J}_{\lambda}(T)$,
\[
|J \cap X| \sim \lambda |X|.
\]
Consequently,
\[
Eg = \sum_{\lambda} \sum_{T \in \mathbb{T}_{\gamma,m}} Ef_{T} \sum_{J \in \mathcal{J}_{\lambda}(T)} \mathbf{1}_{J}.
\]
We take partition $\mathbb{T}_{\gamma,m}=\cup\mathbb{T}_{\beta}$, where $\beta\in [1,r^{\epsilon^{2}}]$ is a dyadic number such that $\#\mathcal{J}_{\lambda}(T)\sim \beta$.
Let 
\[
F_{\lambda,\beta}^{\tau} = \sum_{T \in \mathbb{T}_{\beta}, \, \theta_{T} \in \tau} Ef_{T} \sum_{J \in \mathcal{J}_{\lambda}(T)} \mathbf{1}_{J}, \qquad \tau \in \mathcal{C}_{K}.
\]
By Lemma \ref{lem broad1} and repeated applications of the dyadic pigeonhole principle, we obtain a two-ends reduction as follows.
There exist uniformly dyadic numbers $\lambda, \beta$ and $A_{2} \gtrapprox A_{g}$, as well as a set of $r^{1/2}$-balls $X_{2}$ such that
\begin{itemize}
    \item $|X_{2}| \gtrapprox |X|$;
    \item $\displaystyle \int_{X} |Br_{A_{g}} Eg|^{p} \lessapprox \int_{X_{2}} \bigl| Br_{A_{2}} \{ F_{\lambda,\beta}^{\tau} \} \bigr|^{p}.$
    \item $\displaystyle \int_{Q} \bigl| Br_{A_{2}} \{ F_{\lambda,\beta}^{\tau} \} \bigr|^{p}$ is the same up to a constant multiple for any $r^{1/2}$-ball $Q \subset X_{2}$.
\end{itemize}
Let $\{B_{v}\}$ be a finitely overlapping family of $r^{1-\epsilon^{2}}$-balls that cover $B_{r}$ and we only need to analyze $\int_{X_{2}}\bigl| Br_{A_{2}} \{ F_{\lambda,\beta}^{\tau} \} \bigr|^{p}$ in two cases.

\textbf{Step 2: The non-two-ends scenario}

Assume that $\beta \leq r^{\epsilon^{4}}$. For each ball $B_{v}$, we set
$$
g_{v}
=
\sum_{T\in \mathbb{T}_{\beta}\,:\,\exists J\in \mathcal{J}_{\lambda}(T)\text{ with }J\cap B_{v}\neq \varnothing} f_{T}.
$$
Then on $B_{v}$ we have
$$
\Bigl|\sum_{T\in \mathbb{T}_{\beta}}Ef_{T}\sum_{J\in \mathcal{J}_{\lambda}(T)}\mathbf{1}_{J}\Bigr|
\sim
|Eg_{v}|.
$$
Hence,
\begin{equation}
    \int_{X_{2}\cap B_{v}}|Br_{A_{2}}\{F_{\lambda,\beta}^{\tau}\}|^{p}
    \sim
    \int_{X_{2}\cap B_{v}}|Br_{A_{2}}Eg_{v}|^{p}.
\end{equation}

Observe that for every tube $T$, there are at most $\lesssim r^{\epsilon^{4}}$ balls $B_{v}$ for which there exists some $J\in \mathcal{J}_{\lambda}(T)$ satisfying $J\cap B_{v}\neq \varnothing$. It follows that
\begin{equation}\label{eq non-two L^{2}}
    \sum_{v}\|g_{v}\|_{2}^{2}
    \lesssim
    r^{\epsilon^{4}}\|g\|_{2}^{2}
    \lesssim
    r^{\epsilon^{4}}\|f\|_{2}^{2}.
\end{equation}

Since $A_{2}\gtrapprox A\geq r^{\epsilon^{20}}$, we also have
$
A_{2}\gtrapprox r^{(1-\epsilon^{2})\epsilon^{20}}.
$
We may therefore apply the induction hypothesis \eqref{eq broadinduction} on each ball $B_{v}$ of radius $r^{1-\epsilon^{2}}=R^{\frac{\epsilon^{2}}{(1-\epsilon^{2})^{k-1}}}$, which gives
\begin{equation}\label{eq non-two induction}
    \|Br_{A_{2}}Eg_{v}\|_{L^{p}(B_{v})}^{p}
    \leq
    C_{\epsilon} R^{\epsilon}r^{(1-\epsilon^{2})\epsilon}\|g_{v}\|_{L^{2}}^{2p-4}
    \sup_{\theta:r^{-\frac{1-\epsilon^{2}}{2}}-\text{square}}
    \|g_{v,\theta}\|_{L^{2}_{avg(\theta)}}^{4-p}.
\end{equation}

Moreover, by $L^{2}$-orthogonality,
\begin{equation}\label{eq non-two relation}
    \sup_{\theta:r^{-\frac{1-\epsilon^{2}}{2}}-\text{square}}
    \|g_{v,\theta}\|_{L^{2}_{avg(\theta)}}
    \lesssim
    \sup_{\theta: r^{-\frac{1}{2}}-\text{square}}
    \|f_{\theta}\|_{L^{2}_{avg}(\theta)}.
\end{equation}

Finally, summing over all $B_{v}$ and combining \eqref{eq non-two L^{2}}, \eqref{eq non-two induction}, and \eqref{eq non-two relation}, we obtain
\begin{equation}
\begin{aligned}
   \int_{B_{r}}|Br_{A}Ef'|^{p}
   &\lessapprox
   \sum_{v}
   C_{\epsilon}R^{\epsilon}r^{(1-\epsilon^{2})\epsilon}
   \|g_{v}\|_{L^{2}}^{2p-4}
   \sup_{\theta:r^{-\frac{1-\epsilon^{2}}{2}}-\text{square}}
   \|g_{v,\theta}\|_{L^{2}_{avg(\theta)}}^{4-p}\\
   &\lesssim
   r^{-\epsilon^{3}+\epsilon^{4}}
   C_{\epsilon}R^{\epsilon}r^{\epsilon}
   \|f\|_{L^{2}}^{2p-4}
   \sup_{\theta:r^{-\frac{1}{2}}-\text{square}}
   \|f_{\theta}\|_{L^{2}_{avg}(\theta)}^{4-p}.
\end{aligned}
\end{equation}
This completes the induction.

\textbf{Step 3: The two-ends scenario}

Suppose $r^{\epsilon^{4}}\leq \beta\leq r^{\epsilon^{2}}$. For each $T\in \mathbb{T}_{\beta}$, we consider the shading
\[
Y(T)=\bigcup_{J\in \mathcal{J}_{\lambda}(T)}(J\cap X).
\]
We can verify that $(\mathbb{T}_{\beta},Y)$ is a tube family with $(\epsilon^{2},\epsilon^{4})$-two-ends and $\lambda\beta$-dense shading.

Applying the Lemma \ref{lem incidence} to $(\mathbb{T}_{\beta},Y)$, we obtain a refinement $X_{3}\subset X$ such that $|X\setminus X_{3}|\leq r^{-\epsilon^{2}}|X|$ and
\begin{equation}\label{eq incidencebound}
    \sup_{Q\in X_{3}}\#\{T\in \mathbb{T}_{\beta}:Y(T)\cap Q\neq \varnothing\}\lessapprox r^{O(\varepsilon_{3})}\mu,
\end{equation}
where
\[
\mu=r^{2\epsilon^{2}}m^{\frac{3}{4}}(\lambda\beta)^{-\frac{3}{4}}r^{\frac{1}{4}}(r^{-1}\#\mathbb{T})^{\frac{1}{4}}.
\]

Since $|X_{2}|\gtrapprox|X|$, we have $|X_{2}\setminus X_{3}|\lessapprox r^{-\epsilon^{2}}|X_{2}|$. Let $X_{4}=X_{2}\cap X_{3}$; then $|X_{4}|\gtrapprox|X_{2}|$.

Because $\int_{Q}|Br_{A_{2}}\{F_{\lambda,\beta}^{\tau}\}|^{p}$ is the same up to a constant multiple for all $r^{\frac{1}{2}}$-balls $Q\subset X_{2}$, it follows that
\begin{equation}
    \int_{X_{2}}|Br_{A_{2}}\{F_{\lambda,\beta}^{\tau}\}|^{p}\lessapprox \int_{X_{4}}|Br_{A_{2}}\{F_{\lambda,\beta}^{\tau}\}|^{p}.
\end{equation}

For any ball $Q\subset X_{4}$, we can use \eqref{eq incidencebound} to control the multiplicity.

We apply Lemma \ref{lem broad2} and pigeonholing to find two $\frac{1}{K}$-transverse squares $\tau_{1},\tau_{2}\in \mathcal{M}$ such that
\begin{equation}
    \int_{X_{4}}|Br_{A_{2}}\{F_{\lambda,\beta}^{\tau}\}|^{p}\lesssim K^{O(1)}\int_{X_{4}}\prod_{j=1,2}\Bigl|\sum_{T\in \mathbb{T}_{\beta}[\tau_{j}]}\sum_{J\in \mathcal{J}_{\lambda}(T)}Ef_{T}(x)\mathbf{1}_{J}\Bigr|^{\frac{p}{2}},
\end{equation}
where $\mathbb{T}_{\beta}[\tau_{j}]=\bigcup_{\theta\subset \tau_{j}}\mathbb{T}_{\beta}(\theta)$.

For each $B_{v}$, let
\[
\mathbb{T}_{\beta,v}[\tau_{j}]=\{T\in \mathbb{T}_{\beta}[\tau_{j}]:\exists J\in \mathcal{J}_{\lambda}(T),\; J\cap B_{v}\neq \varnothing\}.
\]
Since $\beta\in [r^{\epsilon^{4}},r^{\epsilon^{2}}]$, we compute that
\begin{equation}
    \int_{B_{r}}|Br_{A}Ef'|^{p}\lessapprox K^{O(1)}r^{10\epsilon^{2}}\max_{v}\int_{X_{4}\cap B_{v}}\prod_{j=1,2}\Bigl|\sum_{T\in\mathbb{T}_{\beta,v}[\tau_{j}]}Ef_{T}\Bigr|^{\frac{p}{2}}.
\end{equation}

For each $B_{v}$, we have
\[
\{T\in \mathbb{T}_{\beta}:Y(T)\cap Q\neq \varnothing\}=\{T\in \mathbb{T}_{\beta,v}:T\cap Q\neq \varnothing\},
\]
and thus
\begin{equation}
    \#\{T\in \mathbb{T}_{\beta,v}:T\cap Q\neq \varnothing\}\lessapprox r^{O(\varepsilon_{3})}\mu,
\end{equation}
for each $Q\subset X_{4}\cap B_{v}$.

{We would like to apply the bilinear refined decoupling theorem. However, the
dominant pair of squares may only be transverse and need not form a good pair.
Then we reduce transverse pairs to good pairs. This reduction incurs only
an acceptable $K^{O(1)}$-loss. We assume that there exists at least one bad line. Otherwise, see Remark \ref{rem reduction}.

In Lemma \ref{lem broad2}, we denote the 
dominant pair of $\frac{1}{K}$-squares as $\tau_{1},\tau_{2}$ and denote the strip connecting the two $\frac{1}{K}$-squares by $S'$. 
We classify three cases based on the relative positions of the two squares.

    \emph{Case 1: The slope of $S'$ is greater than $\varepsilon_{0}^{1/2}$}.
    We invoke Theorem \ref{thm refineddecoupling} to handle this case.
    
    \emph{Case 2: the connecting direction is almost horizontal, but no nearby bad strip is present.}

In the present case, we precisely assume that there is no $K^{-1}$-bad strip $S''$ such that
\[
S'' \cap (\tau_{1}\cup\tau_{2}) \neq \emptyset,
\qquad
\angle(S'',S') \lesssim \varepsilon_0^{1/2}.
\]
 If such a bad strip exists, we are in Case 3
below.

We claim that, under this assumption, the proof of Theorem \ref{thm refineddecoupling} can be repeated verbatim
for the present pair. We emphasize that we are not applying Theorem \ref{thm refineddecoupling} as a black box.
The point is that the proof of Theorem \ref{thm refineddecoupling} uses the good-line hypothesis only through the
following propagation property: during the iteration in proving bilinear refined decoupling, the line
joining two separated frequency pieces, and the lines through both pieces parallel to the
projection of the intersection line of the corresponding tangent planes, remain uniformly
separated from the bad-line set. This property still holds here.

Indeed, write
\[
h(\xi,\eta)=\xi\eta+\rho(\xi,\eta),
\qquad
\|\rho\|_{C^2([-1,1]^2)}\lesssim \varepsilon_0.
\]
For $p_j=(\xi_j,\eta_j)$, $j=1,2$, the projection of the intersection line of
$T_{p_1}S$ and $T_{p_2}S$ is given by
\[
v(p_1,p_2)
=
\bigl(
h_\eta(p_2)-h_\eta(p_1),
h_\xi(p_1)-h_\xi(p_2)
\bigr).
\]
Hence
\[
v(p_1,p_2)
=
\bigl(
\xi_2-\xi_1+\rho_\eta(p_2)-\rho_\eta(p_1),
\eta_1-\eta_2+\rho_\xi(p_1)-\rho_\xi(p_2)
\bigr).
\]
If the line joining $p_1$ and $p_2$ has slope at most $C\varepsilon_0^{1/2}$, then
$|\eta_2-\eta_1|\le C\varepsilon_0^{1/2}|\xi_2-\xi_1|$. Since
\[
|\nabla \rho(p_2)-\nabla \rho(p_1)|
\lesssim \varepsilon_0 |p_2-p_1|,
\]
we obtain, after increasing $C$ and choosing $\varepsilon_0$ sufficiently small,
\[
\left|
\frac{
\eta_1-\eta_2+\rho_\xi(p_1)-\rho_\xi(p_2)
}{
\xi_2-\xi_1+\rho_\eta(p_2)-\rho_\eta(p_1)
}
\right|
\le C\varepsilon_0^{1/2}.
\]
Thus the projection of the tangent-plane intersection direction is still almost horizontal.

Note that the strips generated by Lemma \ref{lem equal} are those passing through $\tau_{1}$ or $\tau_{2}$ and 
parallel to the the projection of the tangent-plane intersection direction.
Consequently, every strips generated by Lemma \ref{lem equal} remains in the same
$C\varepsilon_0^{1/2}$ horizontal cones originating from $\tau_{1}$ or $\tau_{2}$. In the later steps of the proof, the additional
angular errors come only from the eccentricities of the rectangles, namely from factors
such as $K_0^{-1}$ and $K_1/K_2$. Since $K_0$ and $K_2/K_1$ are sufficiently large,
these errors are absorbed into the same horizontal cone. Therefore all non-good lines
generated during the bilinear refined decoupling iteration are contained in the
 cones originating from $\tau_{1}$ or $\tau_{2}$ and with the angle at most $C\varepsilon_{0}^{\frac{1}{2}}$.

By the assumption of the present case, no bad strip is in such cones. Since all frequency pieces produced in the iteration
remain inside $\tau_1,\tau_2$, it follows that no generated strip can become a bad
strip, even at smaller scales. Equivalently, the bad-line separation property needed in
the proof of Theorem \ref{thm refineddecoupling} propagates through the whole iteration.

Therefore the argument of Section 4 applies with ``good'' replaced by this
separation-from-bad-lines condition. See Remark \ref{rem:weaker-good-condition}. This gives the same bilinear refined decoupling
estimate for the present pair. In particular, the
estimate
\[
\int_{X_4\cap B_v}
\prod_{j=1,2}
\left|
\sum_{T\in T_{\beta,v}[\tau_j]} Ef_T
\right|^2
\lesssim
K^{O(1)} r^{O(\epsilon_3)} \mu
\sum_{T\in T_{\gamma,m}}
\|Ef_T\|_{L^4(\omega_{B_r})}^4
\]
holds in this case as well.

   \emph{Case 3: an almost horizontal bad strip is nearby.}
   
We are left with the case where the connecting strip $S'$ is almost horizontal and there
exists a $K^{-1}$-bad strip $S''$ such that
\[
S'' \cap (\tau_{1}\cup\tau_{2}) \neq \emptyset,
\qquad
\angle(S'',S') \lesssim \varepsilon_0^{1/2}.
\]
By transversality, $S'$ itself is not a bad strip, and the angle between $S'$ and $S''$
is at least $\gtrsim K^{-1}$. We now work in coordinates adapted to the bad line
underlying $S''$ and perform the anisotropic rescaling described below.

We show below that, working in coordinates adapted to this bad line, an anisotropic rescaling and a $K^{O(1)}$-fold decomposition reduce the pair to good pairs for a rescaled surface still satisfying \eqref{eq reduction} and \eqref{eq smallcoeff}, up to harmless affine terms.

Due to the assumption in this case, the bad strip $S''$ must pass through $\tau_{1}$ or $\tau_{2}$. Without loss of generality, we assume it passes through $\tau_{1}$, and then apply a parameter transformation to map $\tau_{1}$ to a $1/K$-strip centered at the origin, and map the central bad line of $S''$ to the coordinate axis $\eta = 0$.
After subtracting an affine function from the phase, we may assume that the surface is the form of
\[
h(\xi,\eta)=\eta \widetilde h(\xi,\eta),
\]
where
\[
\widetilde h(\xi,\eta)=\xi+\rho_1(\xi,\eta)
\]
with $\rho_1$ having coefficients $O(\varepsilon_0)$, up to affine terms which do not affect the extension estimate.

Suppose that the centers of the two $K^{-1}$-squares are located at
\[
(0,0)
\qquad\text{and}\qquad
\left(2K^{-\alpha_1},\,2K^{-\alpha_1-\alpha_2}\right),
\]
where $0\leq \alpha_1,\alpha_2\leq 1$. Since the two squares are $K^{-1}$-transverse to the bad strip in the direction $\eta=0$, their vertical separation is $\gtrsim K^{-1}$, we have
\[
\alpha_1+\alpha_2\le 1.
\]

We perform the rescaling
\[
\xi'=K^{\alpha_1}\xi,\qquad
\eta'=K^{\alpha_1+\alpha_2}\eta,\qquad
\gamma'=K^{2\alpha_1+\alpha_2}\gamma,
\]
and correspondingly
\[
x'=\frac{x}{K^{\alpha_1}},\qquad
y'=\frac{y}{K^{\alpha_1+\alpha_2}},\qquad
t'=\frac{t}{K^{2\alpha_1+\alpha_2}}.
\]
Then the rescaled graph is
\[
\bar h(\xi',\eta')
=
K^{2\alpha_1+\alpha_2}
h\left(\frac{\xi'}{K^{\alpha_1}},
        \frac{\eta'}{K^{\alpha_1+\alpha_2}}\right)
=
K^{\alpha_1}\eta'\,
\widetilde h\left(\frac{\xi'}{K^{\alpha_1}},
        \frac{\eta'}{K^{\alpha_1+\alpha_2}}\right).
\]
Using $\widetilde h(\xi,\eta)=\xi+\rho_1(\xi,\eta)$, and discarding affine terms, we obtain
\[
\bar h(\xi',\eta')=\xi'\eta'+\bar\rho(\xi',\eta'),
\]
where $\bar\rho$ satisfies the same small-coefficient condition as in \eqref{eq smallcoeff}, provided that $K$ is large enough. We denote the rescaled surface by $\bar{\mathcal S}$.

Each original frequency $K^{-1}$-square becomes a rectangle $\widetilde\tau_j$ with side lengths
\[
\frac{K^{\alpha_1}}{K}
\quad\text{in the $\xi'$-direction}
\qquad\text{and}\qquad
\frac{K^{\alpha_1+\alpha_2}}{K}
\quad\text{in the $\eta'$-direction}.
\]
We decompose each $\widetilde\tau_j$ into squares of side length $K^{\alpha_1}/K$. After the rescaling, the two centers are located at $(0,0)$ and $(2,2)$ respectively. Since the rescaled rectangles have side lengths at most $1$, the line joining any point of $\widetilde\tau_1$ to any point of $\widetilde\tau_2$ has slope $\sim 1$. Therefore every pair of resulting squares $\tau_1'\subset\widetilde\tau_1$ and $\tau_2'\subset\widetilde\tau_2$ is a good pair, provided $\varepsilon_0$ is sufficiently small.

In the borderline case $\alpha_1=0$, $\alpha_2=1$, the original vertical separation is $2/K$, but after the rescaling it becomes $2$. Since each rescaled rectangle has height $1$, even the closest pair of sub-squares is separated by $\sim1$ in the $\eta'$-direction. Thus the connecting strip still has slope $\sim1$.

There are only $K^{O(1)}$ possible pairs $(\tau_1',\tau_2')$. Let $\widetilde
\tau_j$ denote the image of $\tau_j$ under the above frequency rescaling, and
decompose each $\widetilde\tau_j$ into squares $\tau_j'$ of side length
$1/K$. Let $\mathcal A$ denote the corresponding physical change of
variables
\[
    \mathcal A(x,y,t)
    =
    \left(
        \frac{x}{K^{\alpha_{1}}},
        \frac{y}{K^{\alpha_{1}+\alpha_2}},
        \frac{t}{K^{2\alpha_{1}+\alpha_2}}
    \right),
\]
and set
\[
    r'=\frac{r}{K^{2\alpha_1+\alpha_2}}.
\]
For $T\in\mathbb T_{\beta,v}[\tau_j]$, write
\[
    \widetilde T=\mathcal A(T),
\]
and let $\widetilde{\mathbb T}_{\beta,v}[\tau_j']$ be the collection of such
rescaled tubes with frequency support in $\tau_j'$.

Under the above rescaling, $\mathcal A(X_4\cap B_v)$ is contained in a rectangular
box of dimensions
\[
\left(
\frac{r}{K^{\alpha_1}},
\frac{r}{K^{\alpha_1+\alpha_2}},
\frac{r}{K^{2\alpha_1+\alpha_2}}
\right),
\]
and this box can be covered by $K^{O(1)}$ balls of radius comparable to $r'$.

Moreover, each $r^{1/2}$-ball $Q\subset X_4\cap B_v$ is mapped to an anisotropic
ellipsoid $\mathcal A(Q)$ of dimensions
\[
\left(
\frac{r^{1/2}}{K^{\alpha_1}},
\frac{r^{1/2}}{K^{\alpha_1+\alpha_2}},
\frac{r^{1/2}}{K^{2\alpha_1+\alpha_2}}
\right),
\]
which can be covered by $K^{O(1)}$ standard $(r')^{1/2}$-balls. We use these
balls as localization cells, with cutoffs still supported in the corresponding
ellipsoids $\mathcal A(Q)$. Similarly, each anisotropic tube $\widetilde T$ can
be covered by $K^{O(1)}$ standard scale-$r'$ tubes for the rescaled surface
$\bar{\mathcal S}$.

Therefore, by Cauchy--Schwarz, after paying a $K^{O(1)}$ loss, we may restrict to a single good pair $(\tau_1',\tau_2')$.
\begin{align*}
&\int_{\mathcal{A}({X_{4}\cap B_{v}})}\prod_{j=1,2}|\sum_{\tau_{j}'\subset \tilde \tau_{j}}\sum_{ \tilde T\in \tilde{\mathbb{T}}_{\beta,v}[\tau_{j}']} Ef_{\tilde T}|^{2}
\\& \lesssim K^{O(1)}\sum_{\tau_{1}'\subset  \tilde \tau_{1}}\sum_{\tau_{2}'\subset  \tilde \tau_{2}} \int_{\mathcal{A}({X_{4}\cap B_{v}})}\prod_{j=1,2}|\sum_{ \tilde T\in \tilde {\mathbb{T}}_{\beta,v}[\tau_{j}']} Ef_{\tilde T}|^{2}
\\&\lesssim K^{O(1)}\int_{\mathcal{A}({X_{4}\cap B_{v}})}\prod_{j=1,2}|\sum_{ \tilde T\in \tilde{\mathbb{T}}_{\beta,v}[\tau_{j}']} Ef_{\tilde T}|^{2}.    
\end{align*}

 Denote by $X'$ the resulting weighted union
of standard $(r')^{1/2}$-balls, and by $\mathbb T'_{\beta,v}[\tau_j']$ the
corresponding standard scale-$r'$ tubes. Due to the above $K^{O(1)}$ coverings,
\begin{align*}
&\int_{\mathcal{A}({X_{4}\cap B_{v}})}\prod_{j=1,2}|\sum_{ \tilde T\in \tilde{\mathbb{T}}_{\beta,v}[\tau_{j}']} Ef_{\tilde T}|^{2}
\\
&\lesssim
K^{O(1)}
\int_{X'}
\prod_{j=1,2}
\left|
    \sum_{T'\in \mathbb T'_{\beta,v}[\tau_j']}
    E f_{T'}
\right|^2 .
\end{align*}

The rescaled functions have Fourier support in
$
    \mathcal N_{C K^{2\alpha_1+\alpha_2}/r}
    \bigl(\bar{\mathcal S}_{\tau_j'}\bigr)
    =
    \mathcal N_{C/r'}
    \bigl(\bar{\mathcal S}_{\tau_j'}\bigr).
$
Since the incidence estimate transfers to the rescaled configuration,
we naturally have 
\[
    M'_j \lesssim K^{O(1)} r^{O(\epsilon_3)}\mu,
    \qquad j=1,2.
\]
where $M_j'$ denotes the maximal number of standard scale-$r'$ tubes from
$\mathbb T'_{\beta,v}[\tau_j']$ meeting any $(r')^{1/2}$-ball in $X'$.
Thus we may apply bilinear refined decoupling at scale $r'$ to the good pair
$(\tau_1',\tau_2')$ and then rescale back to obtain
\begin{equation}\label{eq refineddecoupling}
    \int_{X_{4}\cap B_{v}}\prod_{j=1,2}\bigl|\sum_{T\in \mathbb{T}_{\beta,v}[\tau_{j}]}Ef_{T}\bigr|^{2}\lessapprox K^{O(1)}r^{O(\varepsilon_{3})}\mu \sum_{T\in \mathbb{T}_{\gamma,m}}\|Ef_{T}\|_{L^{4}(\omega_{B_{r}})}^{4}.
\end{equation}

All losses introduced in the
decomposition, the change of variables, and the covering of the rescaled box,
balls, and tubes are of size $K^{O(1)}$, and hence are absorbed into the final
$R^\epsilon$ loss.

}

Since all $\|f_{T}\|_{2}$ are comparable, we have the following inequality by Lemma \ref{lem TS} and H\"{o}lder's inequality
\begin{equation}
\sum_{T\in \mathbb{T}_{\gamma,m}}\|Ef_{T}\|_{L^{4}(\omega_{B_{r}})}^{4}\lesssim \sum_{T\in \mathbb{T}_{\gamma,m}}\|f_{T}\|_{L^{2}}^{4}\lesssim \Big(\frac{1}{\#\mathbb{T}}\Big)\|f\|_{L^{2}}^{4}.    
\end{equation}
Therefore, we obtain
\begin{equation}
     \int_{X_{4}\cap B_{v}}\prod_{j=1,2}\bigl|\sum_{T\in \mathbb{T}_{\beta,v}[\tau_{j}]}Ef_{T}\bigr|^{2}\lessapprox K^{O(1)}r^{O(\varepsilon_{3})}\Big(\frac{\mu}{\#\mathbb{T}}\Big)\|f\|_{L^{2}}^{4}.
\end{equation}
Recall that $\mu=r^{2\epsilon^{2}}m^{\frac{3}{4}}(\lambda\beta)^{-\frac{3}{4}}r^{\frac{1}{4}}(r^{-1}\#\mathbb{T})^{\frac{1}{4}}$ and $O(\varepsilon_{3})\leq \epsilon^{2}$, $K=R^{\epsilon^{10}}\leq r^{\epsilon^{2}}$; then we compute that
\begin{equation}\label{eq interpolation1}
\begin{aligned}
    \int_{X_{4}\cap B_{v}}\prod_{j=1,2}\Bigl|\sum_{T\in \mathbb{T}_{\beta,v}[\tau_{j}]}Ef_{T}\Bigr|^{2}
    \lessapprox\;
    &K^{O(1)} r^{O(\varepsilon_{3})} \Bigl(\frac{r^{2\epsilon^{2}} m^{\frac{3}{4}} (\lambda\beta)^{-\frac{3}{4}} r^{\frac{1}{4}} (r^{-1}\#\mathbb{T})^{\frac{1}{4}}}{\#\mathbb{T}}\Bigr) \\
    &\qquad\times \Bigl(\#\mathbb{T}^{\frac{1}{2}} m^{-\frac{1}{2}} r^{-\frac{1}{2}}\Bigr)^{\frac{3}{2}}
    \|f\|_{L^{2}}^{\frac{5}{2}} \sup_{\theta}\|f_{\theta}\|_{L^{2}_{\mathrm{avg}}(\theta)}^{\frac{3}{2}} \\
    &\lessapprox r^{O(\epsilon^{2})}(\lambda \beta r)^{-\frac{3}{4}}\|f\|_{L^{2}}^{\frac{5}{2}} \sup_{\theta}\|f_{\theta}\|_{L^{2}_{\mathrm{avg}}(\theta)}^{\frac{3}{2}}.
\end{aligned}
\end{equation}
We obtain the inequality on $L^{4}$. On the other hand, by the Cauchy--Schwarz inequality and Lemma \ref{lem L2}, 
we obtain the inequality on $L^{2}$:
\begin{equation}\label{eq interpolation2}
    \int_{X_{4}\cap B_{v}}\prod_{j=1,2}\bigl|\sum_{T\in \mathbb{T}_{\beta,v}}Ef_{T}\bigr|\lesssim (\lambda \beta r)\|f\|_{L^{2}}^{2}.
\end{equation}
Interpolating the inequalities \eqref{eq interpolation1} and \eqref{eq interpolation2}, we obtain 
\begin{equation}
\int_{B_{r}}|Br_{A}Ef'|^{\frac{22}{7}}\lessapprox r^{O(\epsilon^{2})}\|f\|_{L^{2}}^{\frac{16}{7}}\sup_{\theta}\|f_{\theta}\|_{L^{2}_{\mathrm{avg}}(\theta)}^{\frac{6}{7}}.
\end{equation}
This completes the proof of the broad part.
    
\end{proof}

\subsection{The proof of Theorem \ref{thm restriction2}}

We apply induction on scales for $R$ in the narrow case, while in the broad case we invoke Proposition~\ref{prop broadestimate} to conclude the proof of Theorem~\ref{thm restriction2}.
Let $p=\frac{22}{7}$.
 
Let $\phi$ be a Schwartz function on $\mathbb{R}^{2}$ that equals to $1$ on $B_{R^{2}}^{2}(0)$ and decays rapidly outside the ball. Take $\tilde f=f*\phi$. Then we have
$$\Big|\int_{B_{R}}|Ef|^{p}-|E\tilde f|^{p}\Big|\lesssim R^{-1000}\|f\|_{2}^{p}.$$
Note that $\|\tilde f\|_{\infty}\leq R^{100}\|f\|_{L^{2}}$. Let $E_{u}$ be the level set $\{\tilde f\sim u \|f\|_{2}\}$ and let $E_{r}=\{|\tilde f|\leq R^{-100}\|\tilde f\|_{2}\}$.
The part of $E(\tilde f\mathbf{1}_{E_r})$ can be bounded straightforwardly.
Moreover, we can use dyadic pigeonholing to find a uniform dyadic number $u \in [R^{-100},R^{100}]$ such that 
$$\int_{B_{R}}|E\tilde f|^{p}\lessapprox \int_{B_{R}}|Eg|^{p},$$
where $g=\tilde f \mathbf{1}_{E_{u}}$. Take $K=R^{\epsilon^{20}}$. 
By the Proposition \ref{prop reduction} with $\epsilon'=\frac{\epsilon}{p}$, we have
\begin{equation}\label{eq BN2}
    \int_{B_{R}}|Eg(x)|^{p}\lesssim_{\epsilon}K^{3\epsilon}\sum_{\tau}\int_{B_{R}}|Eg_{\tau}(x)|^{p}+K^{2\epsilon}\sum_{S}\int_{B_{R}}|Eg_{S}(x)|^{p}+K^{3p}\cdot\int_{B_{R}} |Br_{K^{\epsilon}}Eg|^{p}.
\end{equation}

\textbf{Step 1:}
Suppose the first term dominates. Let $\tau$ be the $\frac{1}{K}$-square that maximizes $\int_{B_{R}}Eg_{\tau}$. By a suitable affine transformation, we may assume that $\tau$ is centered at the origin, i.e., $\tau=\{(\xi,\eta):|\xi|,|\eta|\lesssim \frac{1}{K}\}$. 

We perform the following change of variables
\[
\tilde \eta = K\eta,\quad \tilde \xi = K\xi,\quad \tilde y = \frac{y}{K},\quad \tilde x = \frac{x}{K},\quad \tilde t = \frac{t}{K^{2}}.
\] 
Let $\tilde g(\tilde \xi, \tilde \eta)=g\bigl(\frac{\tilde \xi}{K},\frac{\tilde \eta}{K}\bigr)$. Then we have 
\begin{equation}\label{eq hhhh}
    \int_{B_{R}}|Eg_{\tau}|^{p}=\int_{B_{\frac{R}{K}}}\Bigl|\int_{[-1,1]^{2}}e^{i\bigl[\tilde x\tilde \xi+\tilde y\tilde\eta+\tilde t \tilde \xi\tilde \eta+{\tilde t}{K^{2}}\rho\bigl(\frac{\tilde \xi}{K},\frac{\tilde \eta}{K}\bigr)\bigr]}\, \tilde g \,d\tilde \xi \, d\tilde \eta\Bigr|^{p}K^{-2p} K^{4} \,d\tilde x\,d\tilde y \,d\tilde t,
\end{equation}
where $\rho$ denotes the remainder terms of $h$ other than $\xi\eta$. Note that
\[
\bigl\{(\tilde\xi,\tilde\eta,\tilde\xi\tilde\eta+K^{2}\rho(\frac{\tilde\xi}{K},\frac{\tilde\eta}{K})):\tilde\xi,\tilde\eta\in [-1,1]\bigr\}
\]
is still a surface of the form \eqref{eq reduction}.
Therefore, we can use the induction hypothesis in Theorem \ref{thm restriction2} at the smaller scale $\frac{R}{K}$ and obtain 
\begin{equation}
\begin{aligned}
    &\int_{B_{\frac{R}{K}}}\Bigl|\int_{[-1,1]^{2}}e^{i\bigl[\tilde x\tilde \xi+\tilde y\tilde\eta+\tilde t \tilde \xi\tilde \eta+\tilde t K^{2}\rho \bigl(\frac{\tilde \xi}{K},\frac{\tilde \eta}{K}\bigr)\bigr]}\, \tilde g \,d\tilde \xi \, d\tilde \eta\Bigr|^{\frac{22}{7}} \,d\tilde x\,d\tilde y \,d\tilde t \\
&\qquad \leq C_{\epsilon} R^{\frac{22}{7}\epsilon}K^{-\frac{22}{7}\epsilon}\|\tilde g\|_{\frac{11}{4}}^{\frac{22}{7}} \leq C_{\epsilon}R^{\frac{22}{7}\epsilon}K^{\frac{16}{7}-\frac{22}{7}\epsilon}\|g_{\tau}\|_{\frac{11}{4}}^{\frac{22}{7}}. 
\end{aligned}
\end{equation}

Combine with \eqref{eq BN2} and \eqref{eq hhhh}, and we sum up all the contributions associated with $\tau$
\begin{equation}
    \int_{B_{R}}|Eg|^{\frac{22}{7}}\leq C_{\epsilon}R^{\frac{22}{7}\epsilon}K^{-\frac{1}{7}\epsilon}\sum_{\tau}\|g_{\tau}\|_{\frac{11}{4}}^{\frac{22}{7}}\lesssim C_{\epsilon}R^{\frac{22}{7}\epsilon}K^{-\frac{1}{7}\epsilon}\|g\|_{\frac{11}{4}}^{\frac{22}{7}}.
\end{equation}

This closes the induction by $K=R^{\epsilon^{20}}$.

\textbf{Step 2:}
Suppose the second term dominates. Let $S$ be the bad strip that maximizes $\int_{B_{R}} E g_{S}$, and let $l$ be a bad line contained in $S$. Assume $l$ is of the first kind. We can apply an affine transformation so that the surface map becomes $h\circ F_{\alpha,a}^{-1}(\xi,\eta) = \eta \tilde h(\xi,\eta) + L(\xi)$ and the bad line $l$ becomes $\eta = 0$. For simplicity, we slightly abuse the notation by using $h$ to denote $h\circ F_{\alpha,a}^{-1}$ and still use $S$ to denote the bad strip after the affine transformation. We have
$$S=\{(\xi,\eta):|\eta|\lesssim \frac{1}{K}, |\xi|\lesssim 1\}.$$

Consider the extension operator
$$Eg_{S}(x,y,t)=\int_{[-1,1]^{2}}e^{i[x\xi+y\eta+t(\eta\tilde h(\xi,\eta)+L(\xi))]}\,g(\xi,\eta)\,\mathbf{1}_{S}\,d\xi \,d\eta.$$

We perform the following coordinate transformation
\[
\tilde \eta = K\eta,\quad \tilde \xi = \xi,\quad \tilde y = \frac{y}{K},\quad \tilde x = x,\quad \tilde t = \frac{t}{K}.
\] 
 Let $L(\xi)=c\xi$ and $\tilde g(\tilde \xi,\tilde \eta)=g\bigl(\tilde\xi,\frac{\tilde\eta}{K}\bigr)$. Then we have
\begin{equation}
    Eg(x,y,t)=\frac{1}{K}\int_{[-1,1]^{2}}e^{i[(\tilde x+\tilde tK c)\tilde \xi+\tilde y \tilde \eta+\tilde t \tilde \eta \tilde h(\tilde \xi,\frac{\tilde \eta}{K})]}\,\tilde g(\tilde \xi,\tilde \eta)\,d\tilde \xi\, d\tilde \eta=\frac{1}{K}E_{\tilde{\mathcal{S}}}\tilde g(\tilde x+Kc\tilde t,\tilde y,\tilde t),
\end{equation}
where $\tilde{\mathcal{S}}=\bigl\{(\tilde \xi,\tilde \eta,\tilde \eta \tilde h(\tilde \xi, \tfrac{\tilde \eta}{K})) : \tilde \xi,\tilde \eta \in [-1,1] \bigr\}$ and the surface $\tilde {\mathcal{S}}$ still has the form of \eqref{eq reduction}.

Let the new coordinate transformation be $(\tilde x + Kc\tilde t,\ \tilde y,\ \tilde t) = (x', y', t')$, and one can verify that the Jacobian determinant of this transformation is $\sim 1$.
Therefore, we have
\begin{equation}\label{eq rescale}
\begin{aligned}
    \int_{B_{R}}|Eg_{S}|^{p} &\leq K^{2-p}\int_{L}|E_{\tilde{\mathcal{S}}}\tilde g(\tilde x+Kc\tilde t,\tilde y, \tilde t)|^{p}\, d\tilde x\, d\tilde y\, d\tilde t \\
    &\sim K^{2-p}\int_{L}|E_{\tilde{\mathcal{S}}}\tilde g(x',y',t')|^{p}\, dx'\, dy'\, dt',
\end{aligned}
\end{equation}
where $L=\{(\tilde x, \tilde y ,\tilde t): |\tilde x|\lesssim R,\ |(\tilde y, \tilde t)|\lesssim \tfrac{R}{K}\}$ is a rectangular domain in space.

We divide $L$ into finitely overlapping $\frac{R}{K}$-balls $\{B_{j}\}$. For each $B_{j}$, let $\tilde g_{j}$ be the sum of scale $\frac{R}{K}$ wave packets associated with tubes intersecting $B_{j}$, so that 
\begin{equation}\label{eq narrow1}
    \int_{B_{j}}|E_{\tilde{\mathcal{S}}}\tilde g|^{p}\lesssim \int_{B_{j}}|E_{\tilde{\mathcal{S}}}\tilde g_{j}|^{p}+R^{-1000}\|g\|_{2}^{p}.
\end{equation}
Since the surface $\tilde{\mathcal{S}}$ still has the form of \eqref{eq reduction}, we can use the induction hypothesis in Theorem \ref{thm restriction2} at the smaller scale $\frac{R}{K}$. Thus, we have
\begin{equation}\label{eq narrow2}
    \int_{B_{j}}|E_{\tilde{\mathcal{S}}}\tilde g_{j}|^{\frac{22}{7}}\leq C_{\epsilon} R^{\frac{22}{7}\epsilon}K^{-\frac{22}{7}\epsilon} \|\tilde g_{j}\|_{\frac{11}{4}}^{\frac{22}{7}}.
\end{equation}
Since the Fourier transforms of $\{\tilde g_{j}\}$ are contained in finitely overlapping $\frac{R}{K}$-balls, we have 
\begin{equation}\label{eq narrow3}
\sum_{j}\|\tilde g_{j}\|_{\frac{11}{4}}^{\frac{22}{7}}\lesssim \Bigl(\sum_{j}\int |\tilde g_{j}|^{\frac{11}{4}}\Bigr)^{\frac{8}{7}}\lesssim \|\tilde g\|_{\frac{11}{4}}^{\frac{22}{7}}.    
\end{equation}

Combining \eqref{eq narrow1}, \eqref{eq narrow2}, and \eqref{eq narrow3}, we obtain
\begin{equation}
    \int_{L}|E_{\tilde{\mathcal{S}}}\tilde g|^{\frac{22}{7}}\lesssim C_{\epsilon} R^{\frac{22}{7}\epsilon}K^{-\frac{22}{7}\epsilon} \|\tilde g\|_{\frac{11}{4}}^{\frac{22}{7}}=C_{\epsilon}R^{\frac{22}{7}\epsilon}K^{\frac{8}{7}-\frac{22}{7}\epsilon}\| g_{S}\|_{\frac{11}{4}}^{\frac{22}{7}}.
\end{equation}

Since $\{S\}$ are finite-overlapping, we use the inequalities \eqref{eq BN2} and \eqref{eq rescale} and sum up the contributions from all $S$
\begin{equation}
    \int_{B_{R}}|Eg|^{\frac{22}{7}}\lesssim C_{\epsilon} R^{\frac{22}{7}\epsilon}K^{-\frac{8}{7}\epsilon}\sum_{S}\|g_{S}\|_{\frac{11}{4}}^{\frac{22}{7}}\lesssim C_{\epsilon} R^{\frac{22}{7}\epsilon}K^{-\frac{8}{7}\epsilon}\|g\|_{\frac{11}{4}}^{\frac{22}{7}}.
\end{equation}
Since $K=R^{\epsilon^{20}}$, we close the induction in this case.

\textbf{Step 3}: 
Suppose the third part dominates. Since $|g|$ is comparable, we have $\|g\|_{2}^{\frac{16}{7}}\|g\|_{\infty}^{\frac{6}{7}}\sim \|g\|^{\frac{22}{7}}_{\frac{11}{4}}\leq \|\tilde f\|^{\frac{22}{7}}_{\frac{11}{4}}$. Applying the Proposition \ref{prop broadestimate} with $\epsilon$ replaced by $\epsilon^{2}$, there exists a constant $C_{\epsilon}$ such that
\begin{equation}
    \int_{B_{R}}|Eg|^{\frac{22}{7}}\leq R^{\epsilon}C_{\epsilon}K^{3p}R^{2\epsilon^{2}-\epsilon}\| g\|_{2}^{\frac{16}{7}}\sup_{\theta}\| g_{\theta}\|_{L^{2}_{\mathrm{avg}}(\theta)}^{\frac{6}{7}}\lesssim R^{\epsilon}\| f\|_{\frac{11}{4}}^{\frac{22}{7}},
\end{equation}
where the last inequality follows from $K=R^{\epsilon^{20}}$ and $\|\tilde f\|_{\frac{11}{4}}\leq\|f\|_{\frac{11}{4}}$.

\bibliography{reference}
\bibliographystyle{alpha}

\end{document}